\documentclass[centering,11pt,reqno]{amsart}

\usepackage[utf8]{inputenc}
\usepackage[T1]{fontenc}
\usepackage{amsmath,amsthm}

\usepackage{thmtools}

\usepackage{amsfonts,amssymb}
\usepackage[mathscr]{eucal}
\usepackage{url}
\usepackage{paralist}
\usepackage[colorlinks,cite color=blue,pagebackref=true,pdftex]{hyperref}
\usepackage[margin=2.5cm]{geometry}
\usepackage{tikz}
\usetikzlibrary{calc,shapes}
\usepackage{mathtools}
\usepackage{multicol}
\usepackage{ytableau}
\usepackage{blkarray}
\usepackage[capitalize]{cleveref}
 \usepackage[foot]{amsaddr}
\usepackage{tikz-cd}
\usepackage{ytableau}
\usepackage{bbm}
\usepackage{comment}
\usepackage{color}
\usepackage{todonotes}
\usepackage{accsupp}

\newtheorem{theorem}{Theorem}[section]
\newtheorem*{theorem*}{Theorem}
\newtheorem*{cor*}{Corollary}
\newtheorem*{conj*}{Conjecture}
\newtheorem*{lemma*}{Lemma}
\newtheorem*{prop*}{Proposition}
\newtheorem{lem}[theorem]{Lemma}
\newtheorem{cor}[theorem]{Corollary}
\newtheorem{prop}[theorem]{Proposition}

\newtheorem{notation}[theorem]{Notation}

\theoremstyle{definition}

\newtheorem{definition}[theorem]{Definition}

\newtheorem{construction}[theorem]{Construction}

\newcommand\note[1]{\textbf{#1}}

\renewcommand{\epsilon}{\varepsilon}
\newcommand{\R}{\mathbb{R}}

\newcommand{\conv}{\mathrm{conv}}

\def\R{\mathbb{R}}

\newcommand{\T}{\intercal}

\renewcommand{\R}{\mathbb{R}}

\renewcommand{\c}{\mathbf{c}}
\newcommand{\e}{\mathbf{e}}
\newcommand{\p}{\mathbf{p}}
\renewcommand{\r}{\mathbf{r}}

\newcommand{\n}{\mathbf{n}}
\renewcommand{\u}{\mathbf{u}}
\renewcommand{\v}{\mathbf{v}}
\newcommand{\w}{\mathbf{w}}
\newcommand{\x}{\mathbf{x}}
\newcommand{\y}{\mathbf{y}}

\keywords{\note{Simplex Method, Parametric Optimization, Lower Bounds}}

\subjclass[2020]{90C05, 52B12, 90C31, 52A35}

\title{Any Proof of Polynomial Hirsch Must be Completely Incoherent}

\author[Black]{Alexander E. Black}
\address{Department of Mathematics, Bowdoin College}
\email{a.black@bowdoin.edu}

\author[Xue]{Lei Xue}
\address{Department of Mathematics, Colby College}
\email{leixue@colby.edu}

\begin{document}

\maketitle

\begin{abstract} 
In 1992, Billera and Sturmfels introduced coherent monotone paths on polytopes as part of their description of the fiber polytope construction, and later in 1994 showed with Kapranov that these coherent monotone paths capture the topology of the space of all monotone paths, paths from a minimum to a maximum, in the directed graph of a polytope with orientation induced by a linear function. Those results motivate the following analog of the polynomial Hirsch conjecture: Does there always exist a coherent monotone path of polynomial length on a polytope for any choice of orientation induced by a linear function? We show this is not the case by exhibiting a family of polytopes and corresponding linear functions for which every coherent monotone path is exponentially long. As applications, we strengthen longstanding results pertaining to lower bounds for the shadow simplex method, geometric transversals in discrete geometry, and parametric linear optimization.
\end{abstract}

\section{Introduction}

The polynomial Hirsch conjecture is among the most notorious and well studied problems in polyhedral theory. It asks for a polynomial bound in terms of the number of facets $n$ and dimension $d$ of a polytope on the length of a shortest path between any pair of vertices in its one-skeleton, its set of vertices and edges. This problem is so well studied that to survey all partial results is well beyond the scope of this paper, and we defer to the survey of Kim and Santos \cite{HirschUpdate} and the comprehensive references to the simplex method literature found in \cite{ByTheBook}. Famously, Hirsch originally conjectured a bound of $n-d$, and this was disproven by Santos in \cite{HirschSolution}. Despite this progress, the best known lower bounds remain linear on the order of about $1.05(n-d)$. The best known upper bounds are on the order of $n^{\log(d)}$ originally due to Kalai and Kleitman in \cite{KalaiKleitman}. 

The core motivation for this problem comes from the simplex method for linear programming. Namely, the simplex method solves a linear program $\max(\mathbf{c}^{\intercal} \mathbf{x})$ such that $A \mathbf{x} \leq \mathbf{b}$ by tracing a path in the one-skeleton of the polytope such that at each step the linear objective function increases. It is a major open problem in linear optimization whether there is a version of the simplex method that runs in polynomial time, and if the polynomial Hirsch conjecture is false, the answer to that question is unconditionally no. 

In an effort to make progress on this very difficult problem, several variants and abstractions of the polynomial Hirsch conjecture have appeared in the literature. Open variants include the polynomial monotone Hirsch conjecture motivated by the simplex method's requirement that the objective function increases at each step, the strict monotone Hirsch conjecture by Ziegler in \cite{LecturesonPolytopes}, and the abstract Hirsch conjecture by Kalai for the polymath3 project motivated by the connected layer families introduced in \cite{LimitsofAbstraction}. A positive resolution to any of those would imply the polynomial Hirsch conjecture. Other variants have been resolved including the circuit diameter conjecture of \cite{CircuitDiameterConjecture} recently proven in its polynomial version in \cite{StronglyPolyCircuitDiam} and the continuous variant for interior point methods from \cite{ContinuousDStep} that turned out to be false by breakthrough work in \cite{IPMsNotStronglyPoly}. The analogous question for simplicial complexes is known to have a negative answer even for pseudomanifolds \cite{psueodomanifoldhirsch, simpcompdiameter} and a positive resolution for flag normal simplicial complexes \cite{FlagNormComplex}. It is open for simplicial manifolds with a notable case being that of simplicial spheres. For the few abstractions in which a resolution is known, the paths involved differ fundamentally from those in graphs of polytopes. For example, in the simplicial complex and pseudomanifold case, the resulting graph is a path. For circuit diameters and the continuous variant, the path traverses the interior of the polyhedron.

One approach to study these problems is to study the space of all such paths. In particular, to prove the polynomial Hirsch conjecture, it would suffice to prove its strict monotonic variant (See the discussion surrounding Conjecture $3.9$ in Chapter $3$ of \cite{LecturesonPolytopes}), which asks whether for any linear function $\mathbf{c}^{\intercal} \mathbf{x}$, there is a path of polynomial length from some minimizer of $\mathbf{c}^{\intercal}\mathbf{x}$ to some maximizer of $\mathbf{c}^{\intercal} \mathbf{x}$. We will refer to such paths as \textbf{monotone paths}. There is a topological space one can associate to the set of monotone paths called the Baues complex due to its appearance in the case of hyper-cubes in work of Baues in algebraic topology \cite{BauesOriginal}. In this context, the so-called Baues problem asked whether this space was a topological sphere. Billera, Kapranov, and Sturmfels showed the answer was yes in \cite{CellularStrings} by showing that the Baues complex deformation retracts onto the boundary complex of a polytope called the monotone path polytope, a key special case of the fiber polytopes construction by Billera and Sturmfels in \cite{FiberPolytopes}. The vertices of the monotone path polytope correspond to a subset of monotone paths called \textbf{coherent monotone paths}, and as a consequence of these results, these coherent monotone paths completely capture the topology of the space of all monotone paths for a fixed choice of $\mathbf{c}$. 

To be more explicit, the coherent monotone paths are precisely those that arise from a parametric linear optimization problem, which relates to their appearance in toric geometry where they correspond to coset representatives of quotients of toric varieties by one dimensional subtori \cite{ToricQuotients}. We have a polytope $P = \{\mathbf{x} : A \mathbf{x} \leq \mathbf{b}\}$ and a linear objective $\mathbf{c}$ that gives the orientation of the graph. A monotone path walks from a minimum to a maximum of that objective. Given a generic objective $\mathbf{w}$, the set of optimal faces for across all choices of $\lambda \in \mathbb{R}$ of
\[\max((\mathbf{w} + \lambda \mathbf{c})^{\intercal} \mathbf{x}) \text{ s.t. } A \mathbf{x} \leq \mathbf{b}\]
are all either vertices or edges and corresponds to a monotone path. A monotone path is called \textbf{coherent} if it arises in this way. In the geometric combinatorics literature, coherent monotone paths have chiefly been studied on nice families of polytopes such as simplices and cubes \cite{FiberPolytopes}, zonotopes \cite{edmanthesis, ZonotopeCoherence}, piles of cubes \cite{PilesofCubes}, hyper-simplices \cite{Hypersimps, MPPHyperSimp}, cyclic polytopes \cite{CyclicFiber}, matroid and polymatroid polytopes \cite{FlagPolymatroids}, and random polytopes \cite{CoherentUnimodality}. 

Given this context, we ask the following question: Is the polynomial Hirsch conjecture true for coherent monotone paths? That is, does there always exist a coherent monotone path of polynomial length for any given polytope and linear function $\mathbf{c}^{\intercal} \mathbf{x}$. It turns out the answer is no. Even the Kalai-Kleitman bound does not hold in the coherent setting. 
\begin{theorem}
\label{thm:main}
For each $d \geq 3$, there exists a $d+1$ dimensional polytope $P$ with $(2d+1)(d+1)+1$ facets and a linear objective function such that any coherent monotone path on $P$ with respect to that objective function is of length at least $2^{d-1}-1$.
\end{theorem}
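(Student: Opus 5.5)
The plan is to reduce coherent monotone paths to shadow vertex paths and then plant a known exponential lower bound for the shadow vertex simplex method inside a polytope of one higher dimension. By the parametric description in the introduction, a coherent monotone path for $(P,\c)$ is exactly the sequence of vertices of $P$ that project to the upper boundary of the two-dimensional polygon $\pi_{\w}(P)$, where $\pi_{\w}(\x) = (\c^{\T}\x, \w^{\T}\x)$ and $\w$ is generic. The length of such a path equals the number of edges of the upper hull of $\pi_{\w}(P)$. So it suffices to build a $(d+1)$-polytope $P$ and an objective $\c$ such that, for \emph{every} generic $\w$, the upper chain of $\pi_{\w}(P)$ has at least $2^{d-1}-1$ edges.

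For the base ingredient I would use a Goldfarb-type deformed cube $C\subset\R^d$ with $2d$ facets. Goldfarb's construction has the property that for a particular pair of linear functionals (say $\x\mapsto x_{d}$ and $\x\mapsto x_{d-1}$), the shadow of $C$ is a polygon whose monotone boundary has $2^{d}$ (or at least $2^{d-1}$) vertices. The difficulty is that coherence lets $\w$ be arbitrary, so a single bad shadow is not enough. I would therefore force the shadow direction.

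\textbf{Step 1: force the shadow direction.} I would lift $C$ into $\R^{d+1}$ so that the objective $\c$ is the last coordinate, and the polytope is a thin ``tube'' or stack that interpolates between copies of $C$ in a way that makes any generic $\w$ effectively act through a fixed projection. One candidate is $P=\conv(C\times\{0\}\cup C'\times\{1\})$, or a product-like construction with extra facets, tuned so that on the relevant faces every generic $\w$ induces the same combinatorial sweep as Goldfarb's bad pair. The count of $(2d+1)(d+1)+1$ facets suggests taking $d+1$ layers of constraints with about $2d+1$ each, plus one more facet. Concretely, I would take the intersection of $d+1$ slanted copies of a Goldfarb cube with one extra inequality, each copy tilted along the $\c$ direction. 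The tilts would be chosen so that along $\c$ the optimal face for $\w+\lambda\c$ must visit, in a prescribed order, the vertices of a Goldfarb shadow path regardless of how $\w$ is chosen. The main obstacle is here: the bound must hold uniformly over all $\w$.

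\textbf{Step 2: uniform lower bound.} For an arbitrary generic $\w$, I would decompose $\w$ into its component along the ``tilting'' direction and a residual. By a limiting or scaling argument, I would show the residual cannot shortcut: the upper boundary of $\pi_{\w}(P)$ contains the images of at least $2^{d-1}$ vertices forming Goldfarb's exponential chain. This would follow by induction on $d$, mirroring Goldfarb's recursion, in which each deformed level doubles the number of shadow vertices. At each step, one checks that the new pair of facets forces the path to traverse the lower-dimensional bad path twice, once in each direction, joined by one edge. That gives $L(d)\ge 2L(d-1)+1$ with $L(2)=1$, so $L(d)\ge 2^{d-1}-1$, matching the statement.

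\textbf{Step 3: bookkeeping.} Finally, I would verify that $P$ is full-dimensional in $\R^{d+1}$, count its facets as $(2d+1)(d+1)+1$, and confirm genericity assumptions. I expect Step 1 combined with the uniformity in Step 2 to be the hard part. The first thing I would try is making the inductive deformation parameter depend on $\c$ steeply enough, with $\epsilon$ small, that all other directions of $\w$ become lower-order perturbations.
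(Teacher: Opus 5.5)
\textbf{There is a genuine gap.} Your reformulation is correct: a coherent monotone path for $(P,\c)$ is the upper chain of the shadow $\pi_{\w}(P)$, equivalently the sequence of normal cones crossed by the line $\{\w+\lambda\c\}$, so every such line must cross many cones. The gap is in Step~1, and it is not a missing detail. The goal you set there, that for every $\w$ the path must visit a prescribed Goldfarb chain of vertices, is impossible for any polytope. Let $\v$ be a vertex that is neither a $\c$-minimizer nor a $\c$-maximizer. Then $\v$ has edge directions $\mathbf{g}_1,\mathbf{g}_2$ with $\c^{\T}\mathbf{g}_1>0>\c^{\T}\mathbf{g}_2$. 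So $\mathbf{a}=(-\c^{\T}\mathbf{g}_2)\mathbf{g}_1+(\c^{\T}\mathbf{g}_1)\mathbf{g}_2$ is a nonzero feasible direction at $\v$ with $\c^{\T}\mathbf{a}=0$. Every $\mathbf{n}\in N_{\v}$ satisfies $\mathbf{n}^{\T}\mathbf{a}\le 0$, whereas $\mathbf{a}^{\T}(\w+\lambda\c)=\mathbf{a}^{\T}\w$ for all $\lambda$. Hence for every $\w$ in the open halfspace $\{\mathbf{a}^{\T}\w>0\}$ the line misses $N_{\v}$, and the coherent path avoids $\v$. Consequences:
\begin{itemize}
\item No intermediate vertex lies on all coherent monotone paths.
\item No tilting or scaling can make the ``residual'' part of $\w$ unable to shortcut.
\item Your Step~2 recursion $L(d)\ge 2L(d-1)+1$ is Goldfarb's count for one fixed pair of functionals, so it cannot give a bound uniform in $\w$.
\end{itemize}

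The missing idea is that the long path must be allowed to depend on $\w$. One covers the directions $\w\perp\c$ by finitely many open cones and uses a different building block for each. The paper proceeds as follows.
\begin{itemize}
\item It starts from an \emph{unbounded} seed in $\R^{d+1}$ with $2d+1$ facets, all outer normals pointing upward, and a unique top vertex, for which one horizontal ray $\{(\lambda\c^*,1):\lambda\ge 0\}$ crosses $2^{d-1}$ normal cones.
\item By openness, a whole simplicial cone of directions does the same, and a linear map sends that cone onto any prescribed one. So $d+1$ copies handle the $d+1$ maximal cones of the normal fan of a simplex, which cover $\R^d$.
\item The copies are combined not by a plain intersection of slanted polytopes, which in general destroys the vertices and normal cones you need, but by Lemma~\ref{lem:refinement}. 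Each new copy is flattened so its entire normal fan lies in the interior of the normal cone of the current top vertex, then shrunk and translated so that it cuts off only that vertex. This preserves all other normal cones of both pieces while facet counts simply add.
\item A final horizontal facet makes the polyhedron bounded.
\end{itemize}
A Goldfarb cube is a fine starting point: the paper's seed is built from exactly such a polytope, by lifting it and taking the cone over it from a high apex with the base inequality flipped. However, the glued blocks must have all outer normals pointing upward. A bounded block has a complete normal fan, which no linear map can squeeze into a single pointed cone, so slanted bounded cubes could not be glued this way even with the covering idea added.
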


It already follows from 1973 work of Zadeh on network flow problems \cite{ZadehBadFlow} that a coherent monotone path may be exponentially long. This result appeared also in work by Murty in parametric linear optimization \cite{Mur80}, Goldfarb in the context of the shadow simplex method \cite{report/Goldfarb83}, and Amenta and Ziegler in discrete and computational geometry \cite{jour/cm/AZ98}. However, a single coherent monotone path being exponentially long is only weak evidence toward our result as polytopes generally have exponentially many coherent monotone paths. 

In the context of the shadow simplex method in \cite{black2024exponentiallowerboundspivot}, they studied a more general notion of coherent path that allowed an arbitrary start point instead of the minimum. There they proved exponential lower bounds by arguing that a certain choice of starting point can be made so constrained that there is essentially only one path that one may choose. When starting from the minimum and going to the maximum, such a trick no longer works. We require fundamentally new ideas to overcome this barrier. Most importantly, the topological results by Billera, Kapranov, and Sturmfels \cite{CellularStrings} do not apply in the same way to the notion of coherent paths they consider. They do not capture the topology of all such paths. To be completely explicit, to our knowledge, ours are the first and only lower bounds for the notion of coherent monotone paths introduced by Billera and Sturmfels in \cite{FiberPolytopes}. 

Our result is surprising for a few reasons. First, there is broad consensus due in part to an enormous amount of computational evidence from the simplex method literature that the polynomial Hirsch conjecture should be true. Our result lies in sharp contrast to all of that evidence.

Second there are polytopes such as the hyper-cube \cite{CellularStrings} for which every monotone path is coherent. More generally, the space of coherent monotone paths is numerous. The monotone path polytope is, up to normal equivalence, the Minkowski sum of all sections of the polytope orthogonal to the objective that contain a vertex (See Definition A.2.1 of \cite{AlexThesis}) meaning that it is generically a Minkowski sum of exponentially many polytopes and thus tends to have many vertices. Since those vertices are in bijection with coherent monotone paths, polytopes tend to have many coherent monotone paths. Our lower bound construction does not make any effort to limit the breadth of the set of coherent monotone paths and instead relies on ensuring that any of the numerous such paths must be long. This is in contrast to lower bounds for pivot rules for the simplex method, where they generally only need to show that one path is long. 

Third any short monotone path in our example must not only not be coherent but be far from being coherent in a measurable sense. In the Baues complex, if two monotone paths are adjacent, they agree outside of a two-dimensional face of the polytope. This graph is called the \textbf{flip graph} of monotone paths, and its connectivity was studied in \cite{MontonePathFlipGraph, FlipGraphLattices, MonotonePathEnumeration,OrientedMatroidsMonoPaths, CoxeterGroupMonoPaths, edmanthesis}. As a consequence of the definition, the difference in number of edges between a pair of adjacent monotone paths is at most the maximum number of edges of a two dimensional face of the polytope, which is at most the number of facets of the polytope. Thus, as a consequence, we have the following corollary:
\begin{cor}
For each $d \geq 3$, there exists a $d+1$ dimensional polytope $P$ with $(2d+1)(d+1)+1$ facets and a linear objective function such that any monotone path of polynomial length is superpolynomially many flips away from any coherent monotone path.
\end{cor}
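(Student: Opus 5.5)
The corollary follows from \cref{thm:main} together with a counting argument along the flip graph. I take $P$ and the objective $\mathbf{c}$ to be exactly the $(d+1)$-dimensional polytope with $n=(2d+1)(d+1)+1$ facets given by \cref{thm:main}. There every coherent monotone path has length at least $2^{d-1}-1$.

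The first step bounds how much a single flip can change the length of a path. By the description of the Baues complex, two monotone paths adjacent in the flip graph agree outside a two-dimensional face $F$ of $P$. On $F$ they are the two $\mathbf{c}$-monotone boundary arcs of the polygon $F$ joining its $\mathbf{c}$-minimal and $\mathbf{c}$-maximal vertices. Each arc has at most as many edges as $F$. Every edge of $F$ lies on a distinct facet of $P$: an edge of a polygon face is $F\cap G$ for some facet $G$ of $P$, and distinct edges come from distinct facets. So $F$ has at most $n$ edges, and the lengths of adjacent paths differ by at most $n$. By induction along a shortest chain of flips, if $\gamma$ and $\gamma'$ are $k$ flips apart, then $\bigl||\gamma|-|\gamma'|\bigr|\le kn$.

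The second step is the count. Let $\gamma$ be a monotone path of length at most $p(d)$ for some polynomial $p$, and let $\gamma'$ be any coherent monotone path $k$ flips away. Then $2^{d-1}-1\le|\gamma'|\le|\gamma|+kn\le p(d)+kn$. This gives
\[
k\ \ge\ \frac{2^{d-1}-1-p(d)}{(2d+1)(d+1)+1}.
\]
The right-hand side is exponential in $d$ divided by a quadratic in $d$, so it grows superpolynomially. It also grows superpolynomially as a function of $n$ and $d+1$, since both parameters are polynomial in $d$. For $d$ large enough that $2^{d-1}-1>p(d)$, this gives the claim. If the flip graph is disconnected at $\gamma$, the distance is infinite and the statement holds trivially. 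In fact the Baues complex is connected by \cite{CellularStrings}.

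The main point needing care is the first step, namely that adjacency in the flip graph really means the two paths differ only on the two monotone arcs of one $2$-face. I would take this from the standard description of the Baues complex in \cite{CellularStrings}, as the paper already does. The polynomial facet count in \cref{thm:main} is what makes the per-flip loss polynomial, and so what makes the final quotient superpolynomial.
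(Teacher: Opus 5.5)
Your proposal is correct and is essentially the paper's own argument. A flip changes a path only on one $2$-face, so adjacent paths differ in length by at most the number of edges of a $2$-face, which is at most the number of facets; combining this with the $2^{d-1}-1$ lower bound of Theorem~\ref{thm:main} forces exponentially many flips. You just make explicit the details the paper leaves to its one-paragraph justification in the introduction, namely the injection from edges of $F$ into facets of $P$ and the explicit quotient bound on the number of flips.
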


\subsection{Algorithmic and Geometric Consequences}

The fourth reason our result is surprising relates to where coherent monotone paths appear in the context of the simplex method. They are related to the paths traced by the shadow simplex method introduced in the 1950's by Gass and Saaty \cite{gas55}. Essentially all of the strongest positive results for finding short paths on polytopes use the shadow simplex method such as the average case analysis of Borgwardt \cite{thesis/Borgwardt77}, the smoothed analysis of Spielman and Teng \cite{ST04} recently optimized in \cite{optimal_smoothed_analysis}, the best known polynomial bound for lengths of paths in polytopes with well conditioned constraint matrices in \cite{dadushhahnle, conf/icalp/BR13}, and the by the book analysis in \cite{ByTheBook}. In fact, in \cite{ByTheBook, dadushhahnle, optimal_smoothed_analysis}, they use what they call a \textbf{semi-random shadow}. There they start with a fixed objective $\mathbf{w}$ and randomly sample an auxiliary objective $\mathbf{c}$ and trace the path of optima of $\mathbf{w} + \lambda \mathbf{c}$ such that $\lambda > 0$. Then they bound the expected length of such a path. 

By flattening a polytope, one can use the existence of a single exponentially long coherent path to show that the expected length a random shadow of a polytope can be exponential. However, flattening the polytope can be modeled purely as changing the probability distribution, and thus this average case lower bound depends strongly on the distribution. Here we prove a robust lower bound for semi-random shadows independent of the distribution as expressed in the following corollary.

\begin{cor}
\label{cor: shadowsimplex}
For each $d \geq 3$, there exists a $d+1$ dimensional polytope $P$ with $(2d+1)(d+1)+1$ facets and a linear objective function $\mathbf{w}$ such that for any choice of $\mathbf{c}$ orthogonal to $\mathbf{w}$ the shadow simplex path from $\mathbf{w}$ to $\mathbf{c}$ is of length at least $2^{d-1}-1$.
\end{cor}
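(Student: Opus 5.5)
The corollary should follow from Theorem~\ref{thm:main} by identifying semi-random shadow paths with coherent monotone paths for a suitable objective. I will take $P$ and the objective from Theorem~\ref{thm:main}; call that objective $\mathbf{c}_0$. I then set $\mathbf{w}:=\mathbf{c}_0$, since in the corollary $\mathbf{w}$ is the fixed objective whose optimum we are heading toward.

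The key observation concerns the shadow path for an auxiliary direction $\mathbf{c}\perp\mathbf{w}$. It traces optima of $(1-t)\mathbf{c}+t\mathbf{w}$ for $t\in[0,1]$, equivalently of $\mathbf{c}+\lambda\mathbf{w}$ for $\lambda\in[0,\infty)$. I compare this with the coherent monotone path induced by $\mathbf{c}$ as the generic parameter, which traces optima of $\mathbf{c}+\lambda\mathbf{w}$ over all $\lambda\in\mathbb{R}$. That path runs from the $\mathbf{w}$-minimizer (as $\lambda\to-\infty$) to the $\mathbf{w}$-maximizer (as $\lambda\to+\infty$). It passes through the $\mathbf{c}$-optimum at $\lambda=0$, and the shadow path is exactly its $\lambda\ge 0$ half.

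A single half of the coherent path need not be long, so I have to handle this. I would treat it by symmetry using $-\mathbf{c}$. The optima of $\mathbf{c}+\lambda\mathbf{w}$ for $\lambda\le 0$ are the optima of $-(-\mathbf{c}+\mu\mathbf{w})$ with $\mu=-\lambda\ge0$, which are the minimizers of $-\mathbf{c}+\mu\mathbf{w}$ rather than its maximizers. So the other half is not literally a shadow path for $-\mathbf{c}$, and the direct symmetry argument does not close the gap. Because the statement quantifies over every $\mathbf{c}$, including ones that land at the $\mathbf{w}$-minimum, I must instead exploit structure of the construction.

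The cleaner route is to interpret ``the shadow simplex path from $\mathbf{w}$ to $\mathbf{c}$'' as starting at the $\mathbf{w}$-optimum. It then follows optima of $\mathbf{w}+\lambda\mathbf{c}$ for $\lambda\ge 0$ until the $\mathbf{c}$-optimum. Reparametrized as $\mathbf{c}+\mu\mathbf{w}$ with $\mu=1/\lambda$ running from $\infty$ down to $0$, this is again the half of the coherent path. I therefore need the construction in the proof of Theorem~\ref{thm:main} to guarantee length $2^{d-1}-1$ on the portion above the level where $\mathbf{c}\perp\mathbf{w}$ is optimal. The natural approach is to place the $\mathbf{w}$-minimum so that every vertex maximizing some $\mathbf{c}\perp\mathbf{w}$ lies at or below all the exponentially many required vertices. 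For instance, I would put the construction on a pyramid-like apex region or a degenerate bottom face, so that $\mathbf{w}$ restricted to the sections forcing length is constant on the start region.

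Concretely, I would check in the proof of the theorem that the exponential-length argument shows that \emph{every} path of optima of $\mathbf{c}+\lambda\mathbf{w}$ for $\lambda\ge0$ visits $2^{d-1}$ vertices. This amounts to checking that the relevant sections of $P$ orthogonal to $\mathbf{w}$, those containing vertices, all lie above the vertex optimizing $\mathbf{c}$. If the construction has the form where the bottom of $P$ with respect to $\mathbf{w}$ is a single vertex or a face over which $\mathbf{c}$ attains its max on the $\lambda\ge0$ side, this is immediate. This verification is the main obstacle: it requires revisiting the specific geometry of $P$. If it fails, I would modify $P$ by taking a pyramid (or a thin prism) at the $\mathbf{w}$-minimum, so that every $\mathbf{c}\perp\mathbf{w}$ is maximized on the base region. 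That modification adds one facet at most, so I would confirm that the facet count $(2d+1)(d+1)+1$ already accounts for it. The extra $+1$ suggests that this is exactly the role of that facet.
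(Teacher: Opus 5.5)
You correctly isolate what is needed: for every $\mathbf{c}\perp\mathbf{w}$, the half-line $\{\mathbf{w}+\lambda\mathbf{c}:\lambda\ge 0\}$ by itself must cross the interiors of at least $2^{d-1}$ normal cones. You also correctly note that the \emph{statement} of Theorem~\ref{thm:main} cannot deliver this. But you then leave exactly this step open (``the main obstacle''). The fallback of attaching a pyramid or thin prism at the $\mathbf{w}$-minimum is never shown to preserve the exponential bound, so as written there is no proof.

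The missing observation is that the construction already proves the half-line statement as its central intermediate result. Theorem~\ref{thm: unbounded main} and Corollary~\ref{cor: bounding from below} assert that for every nonzero $\mathbf{c}'\in\R^d$, the ray $\mathbf{r}(\mathbf{c}')=\{(\lambda\mathbf{c}',1):\lambda\ge 0\}$ crosses the interiors of at least $2^{d-1}$ normal cones of $P'$. This holds because $\mathbf{c}'$ lies in one of the cones $C_i$ covering $\R^d$, and $\mathcal{N}(P')$ retains the relevant cones of $Q_i'$. With $\mathbf{w}=\mathbf{e}_{d+1}$ and $\mathbf{c}=(\mathbf{c}',0)$, this ray is literally $\{\mathbf{w}+\lambda\mathbf{c}:\lambda\ge 0\}$. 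Each cone crossed through its interior contributes a distinct unique maximizer that the shadow path must visit, regardless of tie-breaking. That is the paper's entire proof. The proof of Theorem~\ref{thm:main} itself only uses the $\lambda>0$ half of the line, so the worry about halves disappears once you cite the intermediate result instead of the theorem.

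Your alternative criterion, that the $\mathbf{c}$-maximizer lies on the $\mathbf{w}$-minimal face so the coherent path collapses to its shadow half, does in fact hold for $P'$. The $+1$ facet, added for boundedness rather than as a pyramid, is what makes it hold. Every facet normal of the unbounded $P$ has positive last coordinate, hence so does every nonzero vector in their conic hull. So a horizontal $\mathbf{c}$ can only be maximized on the added bottom facet, whose vertices' normal cones also contain $-\mathbf{e}_{d+1}$.

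But you would need to prove this, and even then routing through Theorem~\ref{thm:main} only covers generic $\mathbf{c}$, since coherent monotone paths require every optimal face to be a vertex or an edge. The corollary quantifies over all $\mathbf{c}\perp\mathbf{w}$. You cannot transfer the bound from generic perturbations back to a degenerate $\mathbf{c}$: crossing the interior of a cone is an open condition, so perturbing can only increase the count. You need the ray statement for every nonzero $\mathbf{c}'$ in any case, which is exactly what Corollary~\ref{cor: bounding from below} provides.
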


One can interpret this picture dually. Namely, recall that each face of a polytope has a \textbf{normal cone} consisting of all objectives optimal at that face with rays given by the rows of inequalities tight at that face. The \textbf{normal fan} of the polytope is the union of all such cones. Then the parametric optimization problem for maximizing in the direction $\mathbf{w} + \lambda \mathbf{c}$ has a given vertex in its set of solutions if and only if that line intersects the normal cone of that vertex. Taking the normal fan and intersecting it with the affine hyperplane orthogonal to $\mathbf{w}$ and containing $\mathbf{w}$ induces a regular triangulation of a point configuration. As a consequence of our construction method, we have the following result for stabbing convex subdivisions:

\begin{cor}
\label{cor:stabbing}
For each $d \geq 3$, there exists a regular triangulation of a point configuration with $(2d+1)(d+1)$ points and a point in the interior of one of the simplices of the triangulation such that every ray emanating from that point intersects at least $2^{d-1}$ simplices of the triangulation. 
\end{cor}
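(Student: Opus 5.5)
The plan is to derive Corollary~\ref{cor:stabbing} from Theorem~\ref{thm:main} by polarity, reading off the regular triangulation from the normal fan of the polytope $P$ built there. Let $P=\{\x : A\x\le \mathbf{b}\}$ be the $(d+1)$-dimensional polytope of Theorem~\ref{thm:main}. It has $(2d+1)(d+1)+1$ facets, and its objective $\c$ orients the graph. I would arrange that one facet inequality, say the row $\mathbf{a}_0$, has normal direction $-\c$ up to a positive scalar; equivalently, $P$ has a bottom facet $F_0$ minimizing $\c$. I expect this to be visible in the construction, which accounts for the ``$+1$'' in the facet count. If it is not, I would truncate $P$ near its $\c$-minimum by a hyperplane orthogonal to $\c$. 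This only shortens monotone paths by a bounded amount and preserves coherence, but I would prefer to use the construction as given.

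The first step is to pass to the normal fan. Set $\w=-\c$ and intersect the normal fan of $P$ with the affine hyperplane $H=\{\y:\w^\T\y=\w^\T\w\}$. After generic perturbation $P$ may be taken simple, so each maximal normal cone is simplicial and its slice by $H$ is a $d$-simplex. The other $(2d+1)(d+1)$ facet normals $\mathbf{a}_i$ all satisfy $\w^\T\mathbf{a}_i>0$, because only $\mathbf{a}_0$ is a multiple of $-\c$ and the rest lie in the open half-space in question (this needs checking). Hence they meet $H$ in points $\p_i$. The cones of vertices of $F_0$ contain $\w$, and the regular subdivision of the $\p_i$ induced by the heights $b_i/(\w^\T\mathbf{a}_i)$ is exactly the slice of the fan. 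The marked point is $\w$ itself, which lies in the relative interior of the region associated with the bottom facet. To obtain a literal simplex of the triangulation containing it, I would instead slice at a generic $\w$ close to $-\c$, which sits in the interior of the normal cone of the unique $\c$-minimal vertex $v_0$. I would then use $(2d+1)(d+1)$ points and drop $\mathbf{a}_0$.

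The second step translates rays into coherent paths. A ray in $H$ from $\w$ in direction $\c'$ with $\c'\perp\w$ is $\{\w+\lambda\c':\lambda\ge0\}$. The simplices it crosses are, in order, the normal cones of the vertices visited by the parametric optimum of $(\w+\lambda\c')^\T\x$. Writing $\c'$ as a combination of $\c$ and a component orthogonal to $\c$, and rescaling $\lambda$, this parametric family is $\max(\mathbf{u}+\mu\c)^\T\x$ with $\mathbf{u}$ generic and $\mu$ ranging from $-\infty$-like values upward. For $\mu$ very negative the optimum is $v_0$, and as the ray goes to infinity the direction tends to $\c'$. The main obstacle is here: the ray reaches only the maximizer of $\c'$, not of $\c$, so the traced path is an initial segment of a coherent monotone path rather than a full one. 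I would handle this with the robust form of the construction, namely the shadow statement of Corollary~\ref{cor: shadowsimplex}, which says every shadow path from $\w$ to any orthogonal $\c$ has length at least $2^{d-1}-1$. Proving Corollary~\ref{cor:stabbing} as its dual then reduces to verifying that the construction's proof of Theorem~\ref{thm:main} in fact bounds every such shadow path, with $\w$ chosen as above. A path with $2^{d-1}-1$ edges visits $2^{d-1}$ vertices, so the ray meets at least $2^{d-1}$ simplices.

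Finally, I would handle nongeneric rays that pass through lower-dimensional faces by a perturbation argument. Crossing counts are lower semicontinuous in the appropriate sense, or alternatively one can count the closed simplices met, which can only increase under degeneration.
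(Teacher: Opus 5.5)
\textbf{There is a genuine gap: you slice the fan at the wrong end.} Your diagnosis is right: a ray from the marked point only sees part of a coherent monotone path. But the fix breaks down at several points.

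\emph{The sign claim is false.} With $\w=-\c$ you assert $\w^\T\mathbf{a}_i>0$ for $i\neq 0$. In the construction, every facet normal other than the bottom one satisfies $\c^\T\mathbf{a}_i>0$, i.e.\ $\w^\T\mathbf{a}_i<0$. The claim cannot hold for any polytope: $\w^\T\mathbf{a}_0>0$ as well, and the facet normals of a bounded polytope never all lie in one open half-space. So the points $\p_i$ do not exist. The hyperplane $\{\y : -\c^\T\y=\c^\T\c\}$ meets only the normal cones of vertices of the bottom facet $F_0$. (Since $F_0$ is a facet, there is also no unique $\c$-minimal vertex $v_0$.)

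\emph{Nothing long happens at this end.} Every objective with negative $\c$-component lies in one of these cones, so it is maximized on $F_0$, where $\c$ is constant. Hence $(-\c+\lambda\c')^\T\x$ is maximized at the $\c'$-maximal vertex of $F_0$ for every $\lambda>0$. The slice is just the normal fan of the $d$-polytope $F_0$ with apex $\w$, and every ray from $\w$ stays in a single cell.

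\emph{The proposed reduction is therefore false.} You reduce to checking that the construction bounds every shadow path from $\w=-\c$. But Corollary \ref{cor: shadowsimplex} concerns shadows from $\e_{d+1}=+\c$. Theorem \ref{thm:main} only bounds full coherent paths, and here their length lies entirely above the $\c'$-maximizer. This is why the paper remarks that the corollary is not immediate from Theorem \ref{thm:main}.

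\textbf{The paper's route.} It takes $\w=+\c=\e_{d+1}$ and works with the unbounded polyhedron of Theorem \ref{thm: unbounded main}, which has no bottom facet.
\begin{itemize}
\item All $(2d+1)(d+1)$ of its facet normals point upward, so each normal cone meets $\{y_{d+1}=1\}$ in a bounded cell.
\item These cells are the domains of linearity of the support function, so they form a regular subdivision of the points $\mathbf{a}_i/(\mathbf{a}_i)_{d+1}$.
\item The point $\e_{d+1}$ lies in the interior of the cone of the unique top vertex.
\item Perturbing $\mathbf{b}$ turns the subdivision into a triangulation, and refinement can only increase the number of cells a ray crosses.
\end{itemize}
So Theorem \ref{thm: unbounded main} gives the bound directly for every ray from $\e_{d+1}$.

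Even with your sign corrected, citing Corollary \ref{cor: shadowsimplex} for the bounded polytope would not suffice. The cones of vertices of $F_0$ slice to unbounded cells outside $\conv\{\p_i\}$, and every ray eventually ends in one of them. The corollary's statement does not say how many of its $2^{d-1}$ cones are of this kind. You need the count for the unbounded polyhedron itself.
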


Note this is not immediate from Theorem \ref{thm:main} but follows directly from our construction method. Stabbing convex subdivisions has a long history in discrete and computational geometry (see \cite{RayShooting, GeometricTransversals}). Extending work of Amenta and Ziegler \cite{jour/cm/AZ98}, Shewchuk showed in \cite{StabDelaunay} that a line can intersect a number of simplices in a Delaunay triangulation of a point configuration exponential in the number of vertices. Our result is a substantial strengthening of theirs in the more general context of regular triangulations.

A related consequence occurs for parametric linear optimization, the main algorithmic context in which coherent paths secretly appear. In parametric linear optimization with one parameter, one wants to find the set of all optima for objectives of the form $\mathbf{w} + \lambda \mathbf{c}$ such that $\lambda \in \mathbb{R}$ for $\mathbf{w}, \mathbf{c} \in \mathbb{R}^{d}$ fixed. Even for shortest path problems \cite{carstensen, MulmuleyShah, GajjarRadhakrishnan} and more generally network flow problems \cite{NPmighty, ZadehBadFlow}, the number of parametric optima may be superpolynomial. However, these individual examples require highly pathological choices of $\mathbf{c}$. In particular, in the smoothed analysis of the successive shortest path algorithm, they note that their analysis shows that perturbing the auxiliary objective is enough to obtain polynomial bounds for network flow problems \cite{SmoothedShortPaths}. Kelner and Nikolova observed a similar phenomenon for parametric optimization over lattice polytopes in Lemma 2.10 of \cite{kelnernikolova} in the context of smoothed analysis of low rank convex optimization. The question we ask here is whether such a smoothed analysis yields a polynomial bound for general parametric linear programs, and the following corollary says the answer is no independent of the probability distribution for perturbation:

\begin{cor}\label{cor:parametric optimization}
For each $d \geq 3$, there exists a $d+1$ dimensional polytope $P$ with $(2d+1)(d+1)+1$ facets and a linear objective function $\mathbf{w}$ such that for any choice of $\mathbf{c}$ linearly independent from $\mathbf{w}$ the parametric optimization problem $\mathbf{w} + \lambda \mathbf{c}$ has at least $2^{d-1}$ many solutions.
\end{cor}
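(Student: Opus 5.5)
We will derive Corollary \ref{cor:parametric optimization} from Corollary \ref{cor: shadowsimplex}. Use the same polytope $P$ and objective $\mathbf{w}$. The path of optima of $\mathbf{w}+\lambda\mathbf{c}$, $\lambda\ge 0$, starts at the (generic, hence unique) $\mathbf{w}$-optimal vertex. As $\lambda\to\infty$, the normalized direction $(\mathbf{w}+\lambda\mathbf{c})/\lambda$ tends to $\mathbf{c}$. This is exactly the shadow simplex path from $\mathbf{w}$ to $\mathbf{c}$, and Corollary \ref{cor: shadowsimplex} says it has at least $2^{d-1}-1$ edges, hence at least $2^{d-1}$ vertices. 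Each vertex on that path is a solution of the parametric problem for some $\lambda$, which gives at least $2^{d-1}$ parametric optima.

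The only issue is that Corollary \ref{cor: shadowsimplex} is stated for $\mathbf{c}$ orthogonal to $\mathbf{w}$, while here $\mathbf{c}$ is only linearly independent from $\mathbf{w}$. We reduce to the orthogonal case. Write $\mathbf{c}=\mu\mathbf{w}+\mathbf{c}'$ with $\mathbf{c}'\perp\mathbf{w}$; then $\mathbf{c}'\neq 0$ by linear independence. The family of directions $\{\mathbf{w}+\lambda\mathbf{c}:\lambda\in\mathbb{R}\}$ equals, up to positive rescaling, the family $\{\mathbf{w}+t\mathbf{c}':t\}$ together with possibly directions beyond. Concretely, $\mathbf{w}+\lambda\mathbf{c}=(1+\lambda\mu)\mathbf{w}+\lambda\mathbf{c}'$.

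Set $\lambda^*=-1/\mu$, or $\lambda^*=\infty$ if $\mu=0$. For $\lambda$ on the side of $\lambda^*$ containing $0$, we have $1+\lambda\mu>0$. On that side the direction is a positive multiple of $\mathbf{w}+t\mathbf{c}'$ with $t=\lambda/(1+\lambda\mu)$, and $t$ ranges monotonically over all of $[0,\infty)$ or all of $(-\infty,0]$ as $\lambda$ goes from $0$ toward $\lambda^*$. Maximizers are invariant under positive scaling, so the solutions on this range are exactly the shadow path from $\mathbf{w}$ to $\mathbf{c}'$ or to $-\mathbf{c}'$. Both of these are orthogonal to $\mathbf{w}$, so Corollary \ref{cor: shadowsimplex} applies in either case and yields at least $2^{d-1}$ distinct vertex optima.

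Two mild points need care. First, the parametric problem's solutions should be counted as optimal faces, and each vertex on the shadow path is itself the unique optimum for an open interval of parameters. This holds after a generic choice, or otherwise we count vertices contained in optimal faces. Second, the hypothesis of Corollary \ref{cor: shadowsimplex} must apply to $\pm\mathbf{c}'$, which holds because it covers every $\mathbf{c}$ orthogonal to $\mathbf{w}$. The main obstacle is essentially absent: once the reparametrization $t=\lambda/(1+\lambda\mu)$ is in place, the rest is bookkeeping.
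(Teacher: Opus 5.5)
Your argument is correct, and it takes a different route from the paper.

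The paper orthogonalizes the other way around. It uses Proposition \ref{prop: orthogonality} to slide the base point along the line, replacing $\mathbf{w}$ by some $\mathbf{w}'\perp\mathbf{c}$. It then chooses an invertible $T$ with $T\mathbf{w}'=\mathbf{e}_{d+1}$ and $T\mathbf{c}$ horizontal, applies Theorem \ref{thm: main PLP Version}, and transports the polyhedron back as $T^{\intercal}(P)$ via Lemma \ref{lem:normalfantransform}. That avoids any reparametrization or case split on the sign of the $\mathbf{w}$-component of $\mathbf{c}$. The cost is that $\mathbf{w}'$, and hence $T$, depend on $\mathbf{c}$. So the polyhedron the paper's argument outputs depends on $\mathbf{c}$, and it is the unbounded one with $(2d+1)(d+1)$ facets; as written, that argument gives only a per-$\mathbf{c}$ statement.

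You instead fix the bounded polytope of Corollary \ref{cor: bounding from below} and $\mathbf{w}=\mathbf{e}_{d+1}$ once and for all, and split $\mathbf{c}=\mu\mathbf{w}+\mathbf{c}'$. The increasing substitution $t=\lambda/(1+\lambda\mu)$, together with positive scale invariance of normal cones, shows that one side of the line sweeps the entire ray toward $\mathbf{c}'$ or $-\mathbf{c}'$. This is exactly what yields the stated quantifier order ($P$ and $\mathbf{w}$ chosen before $\mathbf{c}$) and the stated facet count. Using both signs of $\lambda$ is genuinely needed: when $\mu>0$, the values $\lambda\geq 0$ only reach $t<1/\mu$.

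One small tightening: your remark that uniqueness of the optima ``holds after a generic choice'' is not available, since $\mathbf{c}$ is arbitrary. You do not need it. Corollary \ref{cor: bounding from below} says that the ray $\{\mathbf{e}_{d+1}\pm t\mathbf{c}' : t\geq 0\}$ crosses the \emph{interiors} of at least $2^{d-1}$ normal cones. Each of those vertices is therefore the unique optimum for an open interval of $\lambda$, with no tie-breaking involved.
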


Thus, this shows parametric linear optimization is very hard, and the origin of its difficulty is not purely in the choice of auxiliary objective but can come entirely from the underlying linear program. This gives further justification for the use of approximation algorithms for parametric optimization problems as surveyed in detail in \cite{ParametricOptSurvey}. 

To prove these results, we give a novel construction method. The idea is to start with a single polytope with one exponentially large shadow and apply a transformation to make it so for a fixed $\mathbf{w}$ there is a large cone such that for all $\mathbf{c}$ in that cone, the set of maximizers of $\mathbf{w} + \lambda \mathbf{c}$ with $\lambda > 0$ is exponential. The cone is large enough so that $d+1$ rotated copies of it cover the entire space, so we take $d+1$ rotated copies of the polytope and glue them together. Then the resulting polytope satisfies that every ray of the form $\mathbf{w} + \lambda \mathbf{c}$ with $\mathbf{c}$ orthogonal to $\mathbf{w}$ intersects exponentially many cones, which implies all of our bounds. The core nontrivial step is gluing everything together. All technicalities of the proof are to accommodate a core lemma (Lemma \ref{lem:refinement}) giving a sufficient condition allowing one to find a common refinement of a pair of normal fans while only increasing the number of facets additively in contrast to the large growth one can see when using Minkowski sums. See Figure \ref{fig:ProofIdea} for a visual description of the proof idea and Figure \ref{fig:refinement} for an illustration of Lemma \ref{lem:refinement}.

\section{Basic tools and operations from convex geometry}

We first recall several foundational definitions and geometric operations from the theory of polyhedra.

A \textbf{polyhedron} $P \subseteq \mathbb{R}^d$ is the intersection of finitely many closed halfspaces, defined by a system of linear inequalities:
\[P = \{\x \in \R^d : A\mathbf{x} \leq \mathbf{b}\}\]
where $A \in \mathbb{R}^{n \times d}$ and $\mathbf{b} \in \mathbb{R}^n$. A bounded polyhedron is called a \textbf{polytope}. Equivalently, a polytope is the convex hull of a finite set of points in $\R^d$. An inequality $\mathbf{A}_i \mathbf{x} \leq b_i$ from the system is said to be \textbf{irredundant} if its removal changes the polyhedron $P$. 

If a linear inequality $\mathbf{a}^\T \x \leq b$ holds for all points $\x \in P$ and, in addition, holds as an \emph{equality} for at least one point in $P$, then the hyperplane 
\[H= \{ \x\in\R^d:\; \mathbf{a}^\T\x = b\} \]
is called a \textbf{supporting hyperplane} of $P$. Geometrically, $H$ touches $P$ on the boundary without cutting into its interior. A subset of points $F\subseteq P$ is a \textbf{face} of $P$ if it is the intersection between $P$ and a supporting hyperplane. Every face of $P$ is itself a lower-dimensional polyhedron. The dimension of a face, $\dim(F)$, is the dimension of its affine hull. A \textbf{facet} of $P$ is a maximal proper face of $P$, i.e., its dimension is exactly one less than the polyhedron itself. 

If $P$ is full-dimensional in $\R^d$ and the inequalities in $A\x\leq \mathbf{b}$ are irredundant, then there is a one-to-one correspondence between the inequalities in the system and the facets of $P$: each row inequality $\mathbf{A}_i \mathbf{x} \leq b_i$ uniquely defines a supporting hyperplane 
\[H_i = \{ \x\in\R^d\;:\; \mathbf{A}_i\x = b_i \}, \]
and the intersection $F_i = P \cap H_i$ is a facet. The row vector $\mathbf{A}_i$ is an \textbf{outer normal} of this facet $F_i$ (pointing strictly outward from the interior of $P$).

Throughout this paper, we use the notation that a vector $\x \in \R^d$ is pointing \textbf{upward} if its last coordinate is positive, i.e., $\x^{\T} \e_d>0$. A normal cone is called \textbf{upward facing} if all its generating rays are. The \textbf{upper hull} of a polyhedron $P$ is the union of the facets with upward-pointing outer normals.

For any $\u \in \mathbb{R}^d$, the \textbf{support function} of $P$ is defined as $h_P(\u) := \max_{\x \in P} \u^\T \x$. A point $\mathbf{v} \in P$ is called the \textbf{maximizer of a vector $\u$} if the maximum of the support function is achieved at $\v$, i.e., $h_P(\u) = \u^\T \v$.

 The geometric intuition for our construction comes from the \textbf{normal fan} of a polyhedron. 
 \begin{definition}
   Let $P \subset \R^d$ be a polyhedron. The \textbf{normal cone} of a vertex $\v$ of $P$, denoted $N_\v$, is the set of all vectors $\u \in \R^d$ such that the linear functional $\x \mapsto \u^{\T} \x$ achieves its maximum over $P$ at the point $\v$. That is,
   \[N_\v := \{ \u \in \mathbb{R}^d : \u^{\T} \v \ge \u^{\T} \x \text{ for all } \x \in P \}.\]

   Equivalently, $N_\v$ is the conic hull generated by the outer normals of all facet defining inequalities that are tight at $\v$. 
   The \textbf{normal fan} of a polyhedron $P$, denoted $\mathcal{N}(P)$, is defined here as the collection of the normal cones of its vertices. The normal fan $\mathcal{N}(Q)$ of another polyhedron $Q$ is said to \textbf{refine} the normal fan of $P$ if each normal cone of $P$ is a union of normal cones of $Q$.  
   
   The standard definition of a normal fan as in Example $7.3$ of Chapter $7$ of \cite{LecturesonPolytopes} is actually defined to include the normal cones of all faces, not just the vertices, and the normal cones of vertices are exactly the maximal, full-dimensional cones in the fan. We adopt this non-standard definition that ignores higher-dimensional faces, since we will only be focusing on unique vertex maximizers of linear functions and thus only care about the full-dimensional normal cones. For brevity, throughout the paper, any reference to normal cones refers to the full-dimensional ones corresponding to vertices.
 \end{definition}

The following proposition captures the correspondence between maximizers of linear functions and normal cones of polyhedra, linking the language of linear programming directly to polyhedral geometry.

 \begin{prop}\label{prop: maximizer vs normal cone}\cite[Section 7.1]{LecturesonPolytopes}
   Let $P = \{\mathbf{x} \in \mathbb{R}^d : A\mathbf{x} \leq \mathbf{b}\}$ be a full-dimensional polyhedron, $\mathbf{v}$ be a vertex of $P$, and let $I_{\mathbf{v}} = \{i : \mathbf{A}_i \mathbf{v} = b_i\}$ denote the index set of facet-defining inequalities that are tight at $\mathbf{v}$. For any objective vector $\mathbf{u} \in \mathbb{R}^d$, the following statements are equivalent:
   
   \begin{enumerate}
   \item $\mathbf{u}$ is in the normal cone of $\mathbf{v}$, i.e., $\mathbf{u} \in N_\mathbf{v}$.
   
   \item $\mathbf{v}$ is a maximizer of the linear function $\mathbf{x} \mapsto \mathbf{u}^\T \mathbf{x}$ over $P$.
   
   \item $\mathbf{u}$ is in the cone of the outer normals that are tight at $\mathbf{v}$. In other words, there exist non-negative scalars $\lambda_i \geq 0$ for $i \in I_{\mathbf{v}}$ such that:$$\mathbf{u} = \sum_{i \in I_{\mathbf{v}}} \lambda_i \textbf{A}_i$$
   \end{enumerate}
 \end{prop}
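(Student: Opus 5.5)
The plan is to prove the three equivalences by going around the cycle (1)$\Leftrightarrow$(2), (3)$\Rightarrow$(2), (2)$\Rightarrow$(3). The equivalence (1)$\Leftrightarrow$(2) is immediate from the definition: $N_\v$ is defined as the set of $\u$ with $\u^\T\v \ge \u^\T\x$ for all $\x\in P$, which is exactly the statement that $\v$ maximizes $\x\mapsto\u^\T\x$ over $P$. So the content lies in relating (2) and (3).

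For (3)$\Rightarrow$(2), suppose $\u=\sum_{i\in I_\v}\lambda_i\mathbf{A}_i$ with $\lambda_i\ge 0$. For any $\x\in P$ we have $\mathbf{A}_i\x\le b_i=\mathbf{A}_i\v$ for $i\in I_\v$. Multiplying by $\lambda_i\ge0$ and summing gives $\u^\T\x\le\u^\T\v$, so $\v$ is a maximizer.

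For (2)$\Rightarrow$(3), I would use LP duality, or equivalently Farkas' lemma, localized at $\v$. First, observe that $\v$ maximizing $\u^\T\x$ over $P$ implies $\v$ maximizes $\u^\T\x$ over the larger cone $C=\{\x : \mathbf{A}_i\x\le b_i,\ i\in I_\v\}$. To see this, note that the non-tight constraints satisfy $\mathbf{A}_j\v<b_j$, so a small neighborhood of $\v$ in $C$ lies in $P$. If some $\y\in C$ had $\u^\T\y>\u^\T\v$, then moving from $\v$ slightly toward $\y$ stays in $C$ (which is convex) and hence in $P$, while strictly increasing $\u^\T$. This contradicts the maximality of $\v$. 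Consequently, for every direction $\mathbf{d}$ with $\mathbf{A}_i\mathbf{d}\le 0$ for all $i\in I_\v$, we get $\u^\T\mathbf{d}\le 0$. Farkas' lemma then says that $\u$ lies in the cone generated by $\{\mathbf{A}_i : i\in I_\v\}$, which is (3). Here we use that $I_\v$ may be taken to index facet-defining (irredundant) inequalities, since redundant rows do not affect $P$ and the cone of tight facet normals equals the cone of all tight rows.

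The only step that needs care is the localization from $P$ to $C$, namely handling the strict slack of the non-tight constraints. After that, the result is a direct citation of Farkas' lemma; alternatively one can simply cite \cite[Section 7.1]{LecturesonPolytopes} as the source.
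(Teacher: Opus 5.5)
Your proof is correct. (1)$\Leftrightarrow$(2) is exactly the paper's definition of $N_{\mathbf{v}}$, (3)$\Rightarrow$(2) follows by summing the tight inequalities with nonnegative weights, and localizing to the tight constraints before applying Farkas' lemma correctly gives (2)$\Rightarrow$(3). Your remark on redundant rows properly handles the ``facet-defining'' wording of $I_{\mathbf{v}}$; alternatively, you can run the Farkas argument on an irredundant subsystem, which describes the same $P$.

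The paper gives no argument of its own, only the citation to \cite[Section 7.1]{LecturesonPolytopes}. What you wrote is the standard LP-duality/Farkas proof behind that reference, so it just makes the statement self-contained.
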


 We use these connections in order to understand the geometry of paths. Let $P\subset \R^d$ be a polyhedron and $\mathbf{c} \in \mathbb{R}^{d}$. A \textbf{coherent cellular string} \cite{CellularStrings} with respect to $\mathbf{c}$ is the set of faces of $P$ that maximize the parametric linear objective
 \[ \max_{\x\in P}(\w+\lambda\c)^\T \x\]
 for some $\mathbf{w}$ linearly independent of $\mathbf{c}$ across all choices of $\lambda \in \R$. If the set of faces are always either vertices or edges, the coherent cellular string is called a \textbf{coherent monotone path}. This is the case for any generic choice of $\mathbf{w}$.  

 The following proposition shows that tracking a coherent monotone path is equivalent to tracking the sequence of normal cones crossed through by a ray. This connection makes the normal fan the natural geometric setting for our arguments. 

 \begin{prop}
 \label{prop: coherentpath}
   Let $P\subset \R^d$ be a polyhedron, and let $\c, \w$ be two linearly independent vectors in $\R^d$. Define the line
   \[L = \{\w + \lambda\c : \lambda  \in \mathbb{R}\}. \]
   Then, if the set of optimizers of $L$ form a coherent monotone path, a vertex $\v$ of $P$ is in the coherent monotone path induced by $L$ if and only if the line crosses through its normal cone $N_\v$. 
 \end{prop}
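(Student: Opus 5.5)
The plan is to reduce everything to Proposition \ref{prop: maximizer vs normal cone}, applied pointwise along the line $L$. By definition, the coherent monotone path induced by $L$ is the set of faces of $P$ that occur as the set of maximizers of $(\w+\lambda\c)^\T\x$ over $P$ for some $\lambda\in\R$. By hypothesis each such face is a vertex or an edge. So a vertex $\v$ lies in the path if and only if there is some $\lambda$ for which $\v$ is a maximizer of $\w+\lambda\c$. Here we must check that "in the path" covers both cases: $\v$ being itself an optimal face, or $\v$ being an endpoint of an optimal edge. In either case $\v$ is a maximizer of that objective, because every point of an optimal face attains the maximum.

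For the forward direction, suppose $\v$ is in the path. Pick $\lambda$ such that either $\{\v\}$ is the optimal face or $\v$ is a vertex of the optimal edge for $\w+\lambda\c$. Then $\v$ maximizes $\x\mapsto(\w+\lambda\c)^\T\x$. By the equivalence (1)$\Leftrightarrow$(2) of Proposition \ref{prop: maximizer vs normal cone}, the point $\w+\lambda\c\in L$ lies in $N_\v$, so $L$ meets $N_\v$.

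For the converse, suppose $L$ meets $N_\v$ at $\w+\lambda_0\c$. By the same proposition, $\v$ is a maximizer of $\w+\lambda_0\c$. Hence the optimal face $F_{\lambda_0}$ for this objective contains $\v$. Since $F_{\lambda_0}$ is a face of $P$ containing the vertex $\v$, and by assumption it is either a vertex (so $F_{\lambda_0}=\{\v\}$) or an edge (so $\v$ is one of its endpoints), $\v$ belongs to the coherent monotone path.

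The main point needing care is what "crosses through" means. It should be read as "intersects," possibly only on the boundary of the cone. Boundary intersections are exactly the parameters at which the optimum is an edge, and both endpoints of that edge are counted in the path. If the authors intend "crosses through" to mean meeting the interior of $N_\v$, I would add a short argument. Every vertex of the path other than the minimizer and maximizer is reached at an interior point of its cone, because the optimal faces change along $L$ only at finitely many parameters. For the two extreme vertices, the ray direction $\pm\c$ eventually lies in their cones. The path is $\c$-monotone with consecutive vertices joined by optimal edges, and the set of $\lambda$ with $\v$ optimal is a closed interval. This interval is nondegenerate whenever $\v$ is not merely touched at a single edge parameter, and genericity of $\w$ excludes the degenerate touching case. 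I expect this genericity bookkeeping to be the only real obstacle.
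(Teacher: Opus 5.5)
Your argument is correct and is the justification the paper has in mind. The paper states this proposition without proof, treating it as an immediate consequence of Proposition \ref{prop: maximizer vs normal cone} applied at each point $\w+\lambda\c$ of $L$, together with the observation that a vertex lying in an optimal edge is one of its endpoints, which is exactly what you do. Reading ``crosses through'' as ``intersects'' already covers the only direction the paper uses later (if $L$ meets the interior of $N_\v$, then $\v$ is on the path), so your final paragraph is not needed; as sketched, it also relies on genericity of $\w$, which is stronger than the stated vertex-or-edge hypothesis.
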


 We can simplify our choice of the initial point $\w$ by assuming it is orthogonal to $\c$ without loss of generality. 

 \begin{prop}\label{prop: orthogonality}
   Let $P\subset \R^d$ be a polyhedron, and let $\c, \w$ be two linearly independent vectors in $\R^d$. Define the line
   \[L= \{\w + \lambda\c : \lambda\in \R \}.\]
   Then there exists a new vector $\w'$ such that $\w'$ is orthogonal to $\c$, and the line $\{\w' + \lambda\c : \lambda\in \R \}$ is identical to $L$.
 \end{prop}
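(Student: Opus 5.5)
The plan is to take the projection of $\w$ onto the orthogonal complement of $\c$ as the new point, and check that it lies on $L$. Define
\[\w' := \w - \frac{\w^\T \c}{\c^\T \c}\,\c .\]
Since $\c$ is linearly independent from $\w$, it is in particular nonzero, so $\c^\T\c>0$ and $\w'$ is well defined. A direct computation gives $\c^\T \w' = \c^\T\w - \frac{\w^\T\c}{\c^\T\c}\,\c^\T\c = 0$, so $\w'$ is orthogonal to $\c$.

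It remains to show that the two lines coincide. Set $\mu := \frac{\w^\T\c}{\c^\T\c}$, so that $\w' = \w - \mu\c$. For any $\lambda \in \R$,
\[\w' + \lambda \c = \w + (\lambda - \mu)\c .\]
The map $\lambda \mapsto \lambda - \mu$ is a bijection of $\R$, so $\{\w' + \lambda\c : \lambda \in \R\} = \{\w + \lambda\c : \lambda\in\R\} = L$.

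For completeness, $\w'$ is nonzero and still linearly independent from $\c$. If $\w' = \alpha\c$ for some $\alpha$, then $\w = (\alpha+\mu)\c$, contradicting the assumed independence. So the new pair $(\w',\c)$ satisfies the same hypotheses and defines the same line.

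No step here is a real obstacle. The only point needing care is that $\c \neq 0$, which follows from linear independence and is what makes the division by $\c^\T\c$ legitimate.
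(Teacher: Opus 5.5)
Your proof is correct. The paper states this proposition without proof and treats it as elementary, and your argument (take $\mathbf{w}' = \mathbf{w} - \frac{\mathbf{w}^{\intercal}\mathbf{c}}{\mathbf{c}^{\intercal}\mathbf{c}}\,\mathbf{c}$ and reparametrize by $\lambda \mapsto \lambda - \mu$) is the standard verification being left implicit. Your extra check that $\mathbf{w}'$ remains nonzero and independent of $\mathbf{c}$ is worth keeping: the proof of Theorem~\ref{thm:main} silently needs the orthogonal representative to be nonzero in order to apply Corollary~\ref{cor: bounding from below}.
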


Since rescaling does not change the set of normal cones intersected, the set of normal cones intersected by $\{\w' + \lambda \c: \lambda > 0\}$ is the same as that of $\{\lambda \w' + \c: \lambda > 0\}$.

For our construction, we need to understand how linear transformations act on a normal fan. The following lemma acts as our fundamental mechanical tool. This is a folklore lemma, but we include a proof here for self containment. 
\begin{lem}
\label{lem:normalfantransform}
Let $T: \mathbb{R}^{d} \to \mathbb{R}^{d}$ be an invertible linear transformation and $P$ be a polytope. Applying $T$ to the normal fan $\mathcal{N}(P)$ is equivalent to applying the inverse transpose to the polytope, i.e.,
\[ T(\mathcal{N}(P)) = N\big(((T^{-1})^{\T}(P)\big).  \]

\end{lem}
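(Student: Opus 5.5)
The plan is to show, vertex by vertex, that the image under $T$ of each normal cone of $P$ is exactly a normal cone of $Q := (T^{-1})^{\T}(P)$. Once this holds for every vertex, the two fans coincide. Write $S = (T^{-1})^{\T}$, which is again an invertible linear map. Because $S$ is an invertible linear map, it induces a bijection between the vertices of $P$ and those of $Q$, sending $\v$ to $S\v$. So it suffices to prove
\[ T(N_\v(P)) = N_{S\v}(Q) \quad\text{for every vertex } \v \text{ of } P. \]

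The key step is a single identity relating the objective $\u$ on $P$ to the objective $T\u$ on $Q$. For $\u \in \R^d$ and $\x \in P$,
\[ (T\u)^{\T}(S\x) = \u^{\T} T^{\T} (T^{-1})^{\T} \x = \u^{\T}(T^{-1}T)^{\T}\x = \u^{\T}\x. \]
Every point of $Q$ has the form $S\x$ with $\x \in P$. Hence maximizing $T\u$ over $Q$ is the same optimization problem as maximizing $\u$ over $P$, transported by $S$. In particular $h_Q(T\u) = h_P(\u)$, and $S\v$ maximizes $T\u$ over $Q$ if and only if $\v$ maximizes $\u$ over $P$.

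By Proposition \ref{prop: maximizer vs normal cone} (the equivalence of (1) and (2)), this gives $\u \in N_\v(P)$ if and only if $T\u \in N_{S\v}(Q)$, which is the required equality of cones. Since $T$ is a bijection of $\R^d$, this yields both inclusions. Collecting over all vertices gives $T(\mathcal{N}(P)) = \mathcal{N}(Q)$.

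As a consistency check, one can also phrase the argument via facet normals. If $P=\{\x : A\x\le \mathbf{b}\}$, then $Q=\{\y : AT^{\T}\y \le \mathbf{b}\}$. The outer normals of $Q$ are therefore the columns $T\mathbf{A}_i^{\T}$, that is, the images under $T$ of the normals of $P$. The same inequalities are tight at $\v$ and at $S\v$, so by item (3) of the proposition the generated cones correspond under $T$.

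I do not expect any real obstacle here. The only point needing care is the transpose bookkeeping in $(T\u)^{\T}S\x=\u^{\T}\x$, together with noting that $S$ maps vertices bijectively to vertices. This bijection holds because invertible linear maps preserve faces and their dimensions.
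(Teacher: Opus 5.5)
Your proof is correct, but its main argument takes a different route from the paper's. The paper works entirely with the inequality description. It substitutes $\y=(T^{-1})^{\T}\x$ into $A\x\le\mathbf{b}$ to get $AT^{\T}\y\le\mathbf{b}$, and reads off that each facet normal $\mathbf{A}_i$ becomes $T\mathbf{A}_i$ with the same right-hand side. It then appeals to the linear isomorphism preserving which inequalities are tight at which vertex, so $T$ carries the cones generated by tight normals (item (3) of Proposition \ref{prop: maximizer vs normal cone}) onto each other. That is exactly your ``consistency check.''

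Your primary argument instead uses the identity $(T\u)^{\T}(S\x)=\u^{\T}\x$. This transports the whole optimization problem at once and gives $T(N_\v(P))=N_{S\v}(Q)$ straight from the paper's definition of $N_\v$ as the objectives maximized at $\v$, so you do not even need the proposition.

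What your route buys is a coordinate-free proof. It does not depend on an irredundant facet description, needs no separate argument that tightness patterns are preserved, and yields $h_Q(T\u)=h_P(\u)$ as a by-product. What the paper's route buys is an explicit statement that outer normals transform individually by $T$ and that the facet count is unchanged. That is the form in which the lemma is used later, for instance in Lemma \ref{lem: new shrinking}, where each outer normal $\n$ is tracked to confirm it still points upward. Your secondary paragraph supplies this as well.
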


\begin{proof}

For any face $F$ of $P$, let $N_F$ be its normal cone, and let $P = \{\mathbf{x}: A\mathbf{x} \leq \mathbf{b}\} \subseteq \mathbb{R}^{d}$ be an irredundant inequality description of the polytope. Let $\mathbf{A}_{1}, \mathbf{A}_{2}, \dots, \mathbf{A}_{n}$ be facet normals of the polytope. Observe that:
\begin{align*}
  (T^{-1})^{\T} P &= \{(T^{-1})^{\T} \mathbf{x} \in \mathbb{R}^{d}: A\mathbf{x} \leq \mathbf{b} \} \\
  &= \{\mathbf{y}\in \mathbb{R}^{d}: A ((T^{-1})^{\T})^{-1} \mathbf{y} \leq \mathbf{b}\} \\
  &= \{\mathbf{y} \in \mathbb{R}^{d}: A T^{\T} \mathbf{y} \leq \mathbf{b}\} \\
  &= \{\mathbf{y} \in \mathbb{R}^{d}: \begin{pmatrix} (TA_{1})^{\T} \\(TA_{2})^{\T} \\\vdots\\ (TA_{n})^{\T} \end{pmatrix} \mathbf{y} \leq \mathbf{b}\}.
\end{align*}
Note that $(T^{-1})^{\T}$ is a linear isomorphism and so preserves the combinatorics of the polytope mapping the facet normal $A_{i}$ to $TA_{i}$. Via this map, we obtain that the resulting normal fan is precisely $\{T C: C \in \mathcal{N}\}$.
\end{proof}

The following corollary shows that we can use a map that flattens the polytope and conversely squeezes the normal fan of a polyhedron to the neighborhood of a line without changing the combinatorial structure.
\begin{cor}\label{cor: squeezing normal fan}
 Let $P\subset \R^d$ be a polyhedron, $\v$ be a unit vector in $\R^d$, and $\epsilon >0$. There exists a polyhedron $Q$ combinatorial equivalent to $P$ such that every unit generating vector $\u$ in $\mathcal{N}(Q)$ is arbitrarily close to $\v$, i.e., $\min\{ |\u + \v|, |\u-\v|\}<\epsilon$.
\end{cor}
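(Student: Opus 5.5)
We take $Q = T(P)$ for a suitable invertible linear map $T$ and use Lemma \ref{lem:normalfantransform} to identify $\mathcal{N}(Q)$ with the image of $\mathcal{N}(P)$ under $(T^{-1})^{\T}$. Since invertible linear maps preserve the face lattice, $Q$ is combinatorially equivalent to $P$ automatically. The whole task is to choose $T$ so that $S = (T^{-1})^{\T}$ sends every ray of $\mathcal{N}(P)$ to a ray within angle $\epsilon$ of $\pm\v$.

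Choose an orthonormal basis with first vector $\v$. Let $T$ be the map that is the identity on $\v^\perp$ and multiplies $\v$ by a small factor $\delta>0$; this is the map that flattens $P$ along $\v$. Its inverse transpose $S$ fixes $\v^\perp$ and multiplies the $\v$-direction by $1/\delta$.

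Each unit generating vector $\u$ of $\mathcal{N}(P)$ is an outer facet normal $\mathbf{A}_i/|\mathbf{A}_i|$, and there are finitely many of them. Write $\u = \alpha \v + \u^\perp$. Then $S\u = (\alpha/\delta)\v + \u^\perp$, and after normalization the angle between $S\u$ and $\pm\v$ is $\arctan(\delta|\u^\perp|/|\alpha|)$, which tends to $0$ as $\delta\to 0$.

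This requires $\alpha\neq 0$ for every generator, i.e. $\v$ is not orthogonal to any facet normal. This is the only real obstacle: if some facet normal lies in $\v^\perp$, the scaling fixes it and it stays far from $\pm\v$.

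To remove the obstacle, first apply a small rotation $R$ to $P$ that tilts every facet normal off the finite union of hyperplanes $\{\u : \u^{\T}\v = 0\}$ meeting the normals. A generic rotation works because the set of bad rotations has measure zero, and $R$ changes no combinatorics. Alternatively, perturb $\v$ slightly to a generic $\v'$ with $|\v'-\v|<\epsilon/2$, squeeze toward $\v'$ to within $\epsilon/2$, and conclude by the triangle inequality.

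With finitely many generators, all having $\alpha\neq0$, take $\delta$ smaller than $\min_i |\alpha_i|\tan(\epsilon')$ for an appropriate $\epsilon'$. Then every normalized $S\u$ satisfies $\min\{|\u' + \v|, |\u'-\v|\}<\epsilon$. Finally, set $Q = T(P)$, or $T(R(P))$ if the preliminary rotation was used.

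If $P$ is an unbounded polyhedron rather than a polytope, the same argument applies verbatim, since the facet description and the transformation lemma are purely linear.
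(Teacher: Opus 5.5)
Your proposal is correct and follows the paper's route: rotate so that $\v$ becomes a coordinate axis, flatten with $\mathrm{diag}(1,\dots,1,c)$, and use Lemma~\ref{lem:normalfantransform} to see every facet normal stretched toward $\pm\v$. Your extra step for facet normals orthogonal to $\v$ (a generic rotation, or perturbing $\v$ and using the triangle inequality) is a genuine improvement. The paper's proof silently assumes every generator has a nonzero component along $\v$; that holds in its only application (Lemma~\ref{lem: new shrinking}, where all normals point upward) but not for an arbitrary polyhedron.
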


\begin{proof}
Up to a rotation, it suffices to prove the statement for the unit vertical vector $\v = \e_d$. Let $T$ be the linear transformation given by the diagonal matrix $\text{diag}(1, \dots, 1, c)$ for some $0 < c < 1$. This map compresses the last coordinate while preserving its orthogonal complement. Let $Q = T(P)$. By Lemma \ref{lem:normalfantransform}, its normal fan is 
\[\mathcal{N}(Q) = \text{diag}(1, \dots, 1, 1/c) \mathcal{N}(P),\] so choosing $c$ sufficiently small will make $1/c$ arbitrarily large. After normalizing, the last coordinate of each generating ray dominates all other horizontal coordinates. which squeezes the 
(normalized) generating rays to fall within an $\epsilon$-neighborhood of $\e_{d+1}$.
\end{proof}

\section{Constructing Hard Instances}

\subsection{Planting the Seed}

To complete our construction, we start with an initial building block which we call a \textbf{seed polyhedron}. We will later rotate, compress, translate, and intersect it with copies of itself to build our pathological instance. The following definition formalizes the requirements we need for the construction. 

\begin{definition}\label{def: seed}
  A $d+1$-dimensional polyhedron $\tilde{Q}$ is called a \textbf{seed polyhedron} if all of the following conditions hold.
\begin{enumerate}
  \item $\tilde{Q}$ has a number of facets polynomial in $d$. 
  \item The outer normals of its facets all point upward. 
  \item There exists $\mathbf{c} \in \mathbb{R}^{d}$ such that $\{(\lambda \mathbf{c}, 1): \lambda \geq 0\}$ intersects exponentially many normal cones of $\tilde{Q}$.
  \item There is a unique vertex that maximizes $\e_{d+1}$.

\end{enumerate}
\end{definition}

The existence of polytopes that have a $2$-dimensional projection with many vertices is a classical result shown by many others such as Murty \cite{Mur80}, Goldfarb \cite{report/Goldfarb83}, and Amenta and Ziegler \cite{jour/cm/AZ98}:

\begin{lem}
There exists a $d$-dimensional polytope with $2d$ facets and a projection map $\pi: \R^d \to \R^2$ such that the polygon $\pi(P)$ has $2^{d}$ vertices.   
\end{lem}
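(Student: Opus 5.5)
I will prove this with the classical Goldfarb cube, a deformed $d$-cube in the style of Klee--Minty. I will build it recursively, one coordinate at a time, so that the projection onto a fixed coordinate $2$-plane doubles its number of vertices at each step.

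Fix small parameters $0<\epsilon<1/3$ and $0<\delta\ll\epsilon$. I will take
\[P_d=\{\x\in\R^d : 0\le x_1\le 1,\ \epsilon x_1\le x_2\le 1-\epsilon x_1,\ \epsilon(x_{k-1}-\delta x_{k-2})\le x_k\le 1-\epsilon(x_{k-1}-\delta x_{k-2}) \text{ for } 3\le k\le d\}.\]
This description has exactly $2d$ inequalities. For small $\epsilon$ each pair of inequalities on $x_k$ defines a nonempty slab, so $P_d$ is combinatorially a $d$-cube with $2d$ facets and $2^d$ vertices. The projection is $\pi(\x)=(x_{d-1},x_d)$. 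The claim is that every vertex of $P_d$ projects to a vertex of the polygon $\pi(P_d)$, and that these $2^d$ images are distinct.

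I will argue by induction on $d$, using a description of the shadow.

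\textbf{Base case.} For $d=2$, $P_2$ is a trapezoid with $4$ vertices, and $\pi$ is the identity.

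\textbf{Inductive step.} Let $\gamma_{d-1}=\pi_{d-2,d-1}(P_{d-1})$ be the shadow polygon of the previous step, with $2^{d-1}$ vertices. The shadow $\pi_{d-1,d}(P_d)$ is the union over points $\y$ of $P_{d-1}$ of the vertical segments in $x_d$ bounded by the two affine functions $\epsilon(y_{d-1}-\delta y_{d-2})$ and $1-\epsilon(y_{d-1}-\delta y_{d-2})$. Consider the linear map $(y_{d-2},y_{d-1})\mapsto(y_{d-1},\epsilon(y_{d-1}-\delta y_{d-2}))$ and its reflection $x_d\mapsto 1-x_d$. These send $\gamma_{d-1}$ to two thin copies of itself, one near the bottom edge $x_d\approx 0$ and one near the top edge $x_d\approx 1$. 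The new shadow is the convex hull of these two copies.

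I need to show that this convex hull keeps all $2\cdot 2^{d-1}$ vertices. This is the main obstacle. It requires a quantitative strengthening of the induction hypothesis, roughly the following. The vertices of $\gamma_{d-1}$ split into a lower chain and an upper chain. Each chain is strictly convex in the direction $x_{d-1}$, with slopes bounded in absolute value by something like $\epsilon$ times a constant. The term $\delta y_{d-2}$ is what breaks ties, so that the lower chain of the bottom copy becomes strictly convex as a function of the new horizontal coordinate $y_{d-1}$. Because $\epsilon<1/3$, the copies stay flat relative to their vertical separation, about $1-2\epsilon$, so neither copy is hidden in the hull of the other. I expect to choose $\delta$ (possibly a different $\delta_k$ at each level) by a continuity and perturbation argument, so that no three images become collinear. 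Alternatively, I could cite the explicit inequalities verified by Goldfarb \cite{report/Goldfarb83} or Amenta--Ziegler \cite{jour/cm/AZ98}, both of which carry out exactly this bookkeeping.

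Once the inductive shadow description holds, the count doubles at each step: $2^2,2^3,\dots,2^d$. This gives a $d$-polytope with $2d$ facets whose $2$-dimensional projection has $2^d$ vertices.
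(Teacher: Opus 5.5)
Your polytope is the Goldfarb cube, and your reduction is correct. Because the bounds on $x_d$ depend only on $(x_{d-2},x_{d-1})$, the new shadow is $\conv\bigl(\phi(\gamma_{d-1})\cup\rho\phi(\gamma_{d-1})\bigr)$ with $\phi(a,b)=(b,\epsilon(b-\delta a))$ and $\rho(u,t)=(u,1-t)$. The paper gives no argument for this lemma; it only cites Murty, Goldfarb and Amenta--Ziegler. So your fallback of citing Goldfarb or Amenta--Ziegler is exactly the paper's route.

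As a self-contained proof, however, your proposal stops at the one step with content, and the invariant you sketch would not close it. Since $\det\phi=\epsilon\delta\neq 0$, $\phi(\gamma_{d-1})$ is a genuine $2^{d-1}$-gon. Any of its vertices not on its lower chain is not a vertex of the final hull, because the reflected copy lies entirely above it. Neither ``neither copy is hidden'' nor ``no three images collinear'' rules this out; you need \emph{all} vertices of $\phi(\gamma_{d-1})$ on its lower chain. Minimizing $\mu u+t=(\mu+\epsilon)b-\epsilon\delta a$ over $\phi(\gamma_{d-1})$ is the same as maximizing over $\gamma_{d-1}$ a functional with positive $x_{d-2}$-coefficient. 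Hence the lower chain of $\phi(\gamma_{d-1})$ is the image of the \emph{right} chain of $\gamma_{d-1}$: its lower chain, right edge and upper chain together. The invariant you need is that the left boundary of $\gamma_k$ is the single edge on $\{x_{k-1}=0\}$ from $(0,0)$ to $(0,1)$, and that these are the unique lowest and highest vertices.

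Propagating this invariant requires a \emph{lower} bound on slopes, not the upper bound you propose. A right-chain edge with direction $(\Delta a,\Delta b)$, $\Delta b>0$, maps to an edge of slope $\epsilon-\epsilon\delta\,\Delta a/\Delta b$, and these slopes increase along the chain. So the lower chain of $\gamma_{k+1}$ has minimal slope $s_{k+1}=\epsilon-\epsilon\delta/s_k$, with $s_2=\epsilon$. The vertex $(0,0)$ stays the unique lowest vertex iff $s_{k+1}>0$, i.e.\ iff $s_k>\delta$.

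For $\delta<\epsilon/4$ the $s_k$ decrease to $\tfrac12\bigl(\epsilon+\sqrt{\epsilon^2-4\epsilon\delta}\bigr)>\epsilon/2>\delta$. Together with $\epsilon<1/2$, which keeps the two copies separated so that the lower hull of the union is the lower chain of $\phi(\gamma_k)$, the induction closes with $2^{k+1}$ vertices.

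For $\delta>\epsilon/4$ the recursion has no fixed point, so some first $s_K$ is $\le 0$. Then $(0,0)$ leaves the right chain of $\gamma_K$, and $\gamma_{K+1}$ has fewer than $2^{K+1}$ vertices, with no collinearity involved. For example, $\epsilon=0.3$ and $\delta=0.11$ give $s_5>0>s_6$, so the count first fails at $d=7$. Thus $\delta$ must be chosen through this explicit slope recursion, not by a continuity or general-position argument; otherwise you must rely on the citation, as the paper does.
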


Via the standard equivalence between projection and parametric optimization as is expressed in Lemma 7.11 of \cite{LecturesonPolytopes}, the set of vertices of $\pi(P)$, where $\pi(\mathbf{x}) = (\mathbf{c}^{\intercal} \mathbf{x}, \mathbf{w}^{\intercal} \mathbf{x})$, are the set of maximizers of $\lambda_{1} \mathbf{c} + \lambda_{2} \mathbf{w}$ such that $\lambda_{1}, \lambda_{2} \in \mathbb{R}$ are generic and in particular not both zero.

The upper hull of the polygon consists of those vertices with facet normal with positive second coordinate. Since the second coordinate is given by $\mathbf{w}^{\intercal} \mathbf{x}$ under the projection $\pi$, these are precisely the maximizers of $\lambda_{1}\mathbf{c} + \lambda_{2}\mathbf{w}$, where $\lambda_{2} > 0$. Up to the choice of second coordinate of the projection as $\mathbf{w}$ or $-\mathbf{w}$, one can without loss of generality find a projection such that half the vertices are on the upper hull. Furthermore, by possibly replacing $\mathbf{w}$ with $\mathbf{w} - \lambda \mathbf{c}$ for some sufficiently large $\lambda > 0$, we can also assume that each vertex on the upper hull is a unique maximizer of $\mathbf{w} + \lambda \mathbf{c}$ for some $\lambda \geq 0$. This implies the following corollary.

\begin{cor}
\label{cor:upperhull}
There exists a $d$-dimensional polytope $P \subseteq \mathbb{R}^{d}$ with $2d$ facets and $\mathbf{c}, \mathbf{w} \in \mathbb{R}^{d}$ such that the ray $\{\mathbf{w} + \lambda \mathbf{c}: \lambda \geq 0\}$ intersects the interior of at least $2^{d-1}$ normal cones of $P$.

\end{cor}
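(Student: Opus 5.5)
We will derive Corollary~\ref{cor:upperhull} from the preceding lemma on exponential projections, using the projection/parametric-optimization dictionary and the normal-cone description of maximizers in Proposition~\ref{prop: maximizer vs normal cone}.

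Start from the $d$-dimensional polytope $P$ with $2d$ facets and the projection $\pi(\mathbf{x}) = (\mathbf{c}^{\intercal}\mathbf{x}, \mathbf{w}_0^{\intercal}\mathbf{x})$ given by the lemma, so that $\pi(P)$ is a polygon with $2^{d}$ vertices. First we pass from vertices of the polygon to normal cones of $P$. Each vertex $\mathbf{p}$ of $\pi(P)$ has a two-dimensional open normal cone $C_{\mathbf{p}} \subseteq \mathbb{R}^2$. For $(\lambda_1,\lambda_2)\in C_{\mathbf{p}}$, the functional $\lambda_1\mathbf{c}+\lambda_2\mathbf{w}_0$ on $P$ equals $(\lambda_1,\lambda_2)\cdot\pi(\mathbf{x})$. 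Its maximizers over $P$ are therefore exactly the face $\pi^{-1}(\mathbf{p})\cap P$. Perturbing $(\lambda_1,\lambda_2)$ slightly within $C_{\mathbf{p}}$, which is open, we may assume the functional is generic on that face, so it lies in the interior of the normal cone of a vertex $\mathbf{v}_{\mathbf{p}}$ of $P$ with $\pi(\mathbf{v}_{\mathbf{p}})=\mathbf{p}$. Distinct polygon vertices give distinct $\mathbf{v}_{\mathbf{p}}$. Consequently, the plane $\mathrm{span}(\mathbf{c},\mathbf{w}_0)$ meets the interiors of at least $2^{d}$ normal cones of $P$, one for each direction region $C_{\mathbf{p}}$.

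Next we cut down to a half-plane. Since the regions $C_{\mathbf{p}}$ partition the circle of directions into $2^d$ arcs, the open upper half-plane $\{\lambda_2>0\}$ or the lower one contains points of at least $2^{d-1}$ of them. Concretely, by pigeonhole one of the two sides contains the interiors of at least $2^{d-1}$ arcs, up to the at most two arcs straddling the horizontal axis, which still meet both sides. Replacing $\mathbf{w}_0$ by $-\mathbf{w}_0$ if needed, we may assume this is the upper side. Thus at least $2^{d-1}$ of the cones $C_{\mathbf{p}}$ meet $\{\lambda_2>0\}$; these are the upper-hull vertices.

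Finally we turn the open half-plane into a ray. The directions with $\lambda_2>0$ are, up to positive scaling, exactly the line $\{\mathbf{w}_0+\lambda\mathbf{c}:\lambda\in\mathbb{R}\}$. Since there are finitely many arcs, there is a $\Lambda>0$ such that each relevant arc contains a direction $\mathbf{w}_0+\lambda\mathbf{c}$ with $\lambda>-\Lambda$. Set $\mathbf{w}=\mathbf{w}_0-\Lambda\mathbf{c}$. Then every relevant arc meets the ray $\{\mathbf{w}+\lambda\mathbf{c}:\lambda\ge0\}$ in a relatively open segment. By the first step, each such arc contains a direction in the interior of a distinct vertex normal cone of $P$, namely the cone of the unique maximizer. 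Hence the ray meets the interiors of at least $2^{d-1}$ normal cones.

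The main care point is the genericity in the first step. The preimage of a polygon vertex may be a positive-dimensional face of $P$, so a direction in $C_{\mathbf{p}}$ need not lie in the interior of a vertex cone of $P$. This is handled by noting that the open set $C_{\mathbf{p}}$ restricted to the ray is a nondegenerate segment. That segment is covered by the finitely many normal cones of vertices of the fiber face, so some vertex cone has interior meeting it.
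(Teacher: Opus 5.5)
Your outline is the paper's own argument. Shadow vertices correspond to maximizers of $\lambda_1\mathbf{c}+\lambda_2\mathbf{w}_0$, the sign of $\mathbf{w}_0$ is chosen so that at least half of the $2^d$ sectors meet $\{\lambda_2>0\}$, and $\mathbf{w}_0$ is replaced by $\mathbf{w}_0-\Lambda\mathbf{c}$. Your pigeonhole and shifting steps are fine.

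The gap is exactly at the point you flag, and your resolution of it is false. Every functional in $\mathrm{span}(\mathbf{c},\mathbf{w}_0)$ equals $(\lambda_1,\lambda_2)\cdot\pi(\mathbf{x})$, so it is \emph{constant} on each fiber $F_{\mathbf{p}}=\pi^{-1}(\mathbf{p})\cap P$. Hence for every $(\lambda_1,\lambda_2)\in C_{\mathbf{p}}$ the maximizer set is all of $F_{\mathbf{p}}$. If $F_{\mathbf{p}}$ has two or more vertices, none of these functionals has a unique maximizer, i.e.\ none lies in the interior of any vertex normal cone. Perturbing inside $C_{\mathbf{p}}$ never leaves the plane, so it cannot help. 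The covering argument in your last paragraph confuses relative interior in the segment with interior in $\mathbb{R}^d$: the whole segment lies in $N_{F_{\mathbf{p}}}=\bigcap N_{\mathbf{v}}$, the intersection over the vertices $\mathbf{v}$ of $F_{\mathbf{p}}$, which is a common proper face of all those cones. For example, project $[-1,1]^3$ to its first two coordinates. Every fiber is an edge, and $\mathrm{span}(\mathbf{e}_1,\mathbf{e}_2)$ meets no vertex-cone interior at all.

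To close the gap you need genericity of the \emph{projection}, not of the functional. One way uses the classical examples (Goldfarb's deformed cube, Amenta--Ziegler). These are combinatorial cubes with exactly $2^d$ vertices. The fibers $F_{\mathbf{p}}$ are pairwise disjoint and each contains a vertex, so $2^d$ shadow vertices force every fiber to be a single vertex. This is what the paper tacitly relies on when it identifies shadow vertices with maximizers of generic $\lambda_1\mathbf{c}+\lambda_2\mathbf{w}$.

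If you want to use only the lemma as stated (a $d$-polytope with $2d$ facets may have more than $2^d$ vertices), perturb the pair $(\mathbf{c},\mathbf{w}_0)$ instead. Choose a nearby pair spanning a plane $L'$ not contained in the linear span of any wall of $\mathcal{N}(P)$. Take $\mathbf{u}_{\mathbf{p}}=\lambda_1\mathbf{c}+\lambda_2\mathbf{w}_0$ with $(\lambda_1,\lambda_2)\in C_{\mathbf{p}}$. Every functional near $\mathbf{u}_{\mathbf{p}}$ is maximized only at vertices of $F_{\mathbf{p}}$. The plane $L'$ meets this neighborhood in a relatively open set covered by the finitely many cones $N_{\mathbf{v}}$, $\mathbf{v}$ a vertex of $F_{\mathbf{p}}$, so some $N_{\mathbf{v}}\cap L'$ is two-dimensional. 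By the choice of $L'$, that set must then meet $\mathrm{int}(N_{\mathbf{v}})$. Disjointness of the fibers gives $2^d$ distinct vertex cones whose interiors meet $L'$. Your remaining steps then go through unchanged with the perturbed pair.
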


\begin{construction}[Construction of the seed polyhedron $\tilde{Q}$]\label{construction: Q tilde}

Let $P = \{\mathbf{x}: A \mathbf{x} \leq \mathbf{b}\} \subseteq \mathbb{R}^{d}$ be a $d$-dimensional polytope and $\mathbf{c}, \mathbf{w}$ be in $\mathbb{R}^{d}$ such that $\{\mathbf{w} + \lambda \mathbf{c}: \lambda \geq 0\}$ intersects the interior of at least $2^{d-1}$ normal cones of $P$ as in Corollary \ref{cor:upperhull}. 

First, lift $P$ to one dimension higher by letting 
\[
\tilde{P} := \{(\mathbf{x}, \mathbf{w}^{\intercal} \mathbf{x}) : \mathbf{x} \in P\} \subseteq \mathbb{R}^{d+1}.
\] 
Geometrically, $\tilde{P}$ completely lies inside the hyperplane $H := \{\y=(\mathbf{x}, z) \in \mathbb{R}^{d+1} : \mathbf{w}^\intercal \mathbf{x} - z = 0\}$, where $z$ denotes the $(d+1)$-th coordinate. 

Let $\x_0$ be a point in the interior of $P$, so $A\x_0< \mathbf{b}$. For each vertex $\mathbf{v} \in P$ that is projected to the upper hull of $\pi(P)$, choose a scalar $\mathbf{\lambda}_{\mathbf{v}} \in \mathbb{R}_{>0}$ such that $\v$ is the unique maximizer of the linear function $(\mathbf{w}+\lambda_{\v} \mathbf{c})^\T \x$ over $P$. Then we define a sufficiently large bounding height $M$ by 
\[M := 2\max(\{\mathbf{w}^{\intercal} \mathbf{x}: \mathbf{x} \in P\} \cup \{\lambda_{\mathbf{v}}\mathbf{c}^{\intercal}(\mathbf{v}-\mathbf{x}_0): \mathbf{v} \in U(P)\},0),\]
where $U(P)$ is the set of vertices projected to the upper hull of $\pi(P)$. Since $\mathbf{w}^{\intercal}\x_0 < M+1$, the point $\y^* = (\x_0, M+1)$ lies strictly above $H$ in the last coordinate. Therefore we can cone over $\tilde{P}$ with apex $\y^*$ to get a pyramid $Q$:
\[Q := \text{conv}(\tilde{P}\cup \y^*).
\] 
By construction, $\tilde{P} = Q \cap \{ \y=(\mathbf{x},z) \in \mathbb{R}^{d+1}: z = \mathbf{w}^{\intercal} \mathbf{x}\}$, and so there exists a $2d \times (d+1)$ matrix $A'$ and a vector $\mathbf{b}' \in \mathbb{R}^{2d}$ such that the pyramid $Q$ can be written (without redundancy) as:
\[
Q = \{\y = (\mathbf{x},z)\; :\; A'\y \leq \mathbf{b}', \mathbf{w}^{\intercal} \mathbf{x} \leq z\}.
\]
Finally, we define our candidate of the seed polyhedron for our construction by flipping the halfspace related to $H$ and let
\begin{align}\label{def: Q tilde}
\tilde{Q} :=\{\y=(\mathbf{x},z) \; :\; A'\y \leq \mathbf{b}', \mathbf{w}^{\intercal} \mathbf{x} \geq z\}.
\end{align}
Geometrically, this corresponds to taking the pointed (unbounded) cone with apex vertex $\mathbf{y}^{\ast}$ generated by the rays passing through the lifted polytope $\tilde{P}$, and intersecting it with the halfspace not containing $\mathbf{y}^{\ast}$ with bounding hyperplane through $\tilde{P}$. See Figure \ref{fig:theSeed} for an illustration.
\end{construction}

\begin{figure}
  \centering
  
\[\begin{tikzpicture}


  \draw[thick, black] (2,0) -- (3,1); 

  \draw[thick, black] (2,-2) -- (3,-2);
  
  \draw[thick, blue, dashed] (2,0) -- (2.5,3)-- (3,1);

  \draw[thick, red, ->, dashed] (2,0) -- (1.75, -1);
  
  \draw[thick, red, ->, dashed] (3,1) -- (3.5, -1);

  \draw (2.5,.5) node[red, circle, fill, inner sep = 1.5pt] {};

  \draw (2.5,-2) node[brown, circle, fill, inner sep = 1.5pt] {};

  \draw (2.5,3) node[blue, circle, fill, inner sep = 1.5pt] {};
  
  \fill[blue!30, opacity=0.4] (2,0) -- (3,1) -- (2.5,3) --cycle; 
  \fill[red!30, opacity=0.4] (1.75,-1) -- (2,0) -- (3,1) -- (3.5,-1) --cycle; 

  \draw (2.5,-2.25) node[brown] {$\mathbf{x}_{0}$};

  \draw (2.6,.2) node[red] {$\mathbf{y}_{0}$};
  
  \draw (2.5,3.25) node[blue] {$\mathbf{y}^{\ast}$};

  \draw (2.5,1.5) node[blue] {$Q$};

  \draw (2.5,-.5) node[red] {$\tilde{Q}$};

  \draw (3.25,-2) node[black] {$P$};

  \draw (3.25,1) node[black] {$\tilde{P}$};
\end{tikzpicture}\]
  \caption{Pictured is a schematic diagram of the seed construction. We start with $P$ and a strictly feasible point $\mathbf{x}_{0}$ and lift it one dimension up with height function $\mathbf{w}^{\intercal}\mathbf{x}$ to obtain $\tilde{P}$ and the strictly feasible $\mathbf{y}_{0} \in \tilde{P}$. Then we lift $\mathbf{y}_{0}$ sufficiently higher to obtain $\mathbf{y}^{\ast}$, and $Q = \text{conv}(P \cup \mathbf{y}^{\ast})$. Finally to obtain $\tilde{Q}$, we flip the inequality defining $\tilde{P}$ as a facet of $Q$. The result is an unbounded polyhedron. }
  \label{fig:theSeed}
\end{figure}

The remaining lemmas in this section are devoted to prove that $\tilde{Q}$ satisfies all conditions in Definition \ref{def: seed}.

\begin{lem}
\label{lem:allpositive}
The outer normals of facets of $\tilde{Q}$ all have a positive last coordinate, i.e., they all point upward.
\end{lem}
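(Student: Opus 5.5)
The plan is to read off the facet inequalities of $\tilde{Q}$ directly and check the sign of the last coordinate of each outer normal. By \eqref{def: Q tilde}, $\tilde{Q}$ is cut out by two kinds of inequalities. The first is the $2d$ inequalities $A'\y \leq \mathbf{b}'$, which are the lateral facets of the pyramid $Q$. The second is the flipped inequality $\mathbf{w}^\T\x \geq z$, i.e.\ $(-\mathbf{w}, 1)^\T \y \leq 0$. The flipped inequality is immediate: its normal $(-\mathbf{w},1)$ has last coordinate $1>0$. So the work is entirely in the lateral facets.

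For the lateral facets, first identify them explicitly. Each facet of the pyramid $Q$ other than its base $\tilde{P}$ is the convex hull of $\y^*$ and a facet of $\tilde{P}$. The facets of $\tilde{P}$ (inside $H$) correspond to facets $\{\mathbf{A}_i\x = b_i\}\cap P$ of $P$. Hence the lateral facet $i$ lies in a hyperplane containing both $\y^*=(\x_0,M+1)$ and $\{(\x,\mathbf{w}^\T\x): \mathbf{A}_i\x=b_i\}$. I will write its normal in the form $(\mathbf{A}_i, t_i)$ for some scalar $t_i$, which is possible since the hyperplane contains the $(d-1)$-dimensional set above whose points project onto $\{\mathbf{A}_i\x=b_i\}$. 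Its equation is $\mathbf{A}_i\x + t_i(z-\mathbf{w}^\T\x) = b_i$. Plugging in $\y^*$ gives $\mathbf{A}_i\x_0 + t_i(M+1-\mathbf{w}^\T\x_0) = b_i$, so
\[t_i = \frac{b_i - \mathbf{A}_i\x_0}{M+1-\mathbf{w}^\T\x_0}.\]

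Next, check the sign. The numerator is positive because $\x_0$ is interior to $P$. The denominator is positive because $M \geq 2\max_{P}\mathbf{w}^\T\x \geq \mathbf{w}^\T\x_0$ when that max is nonnegative, and $M\ge 0>\mathbf{w}^\T\x_0$ otherwise; this is the fact already noted in the construction. So $t_i>0$.

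It remains to confirm that $(\mathbf{A}_i,t_i)$, rather than its negative, is the outer normal, i.e.\ that $Q$ lies on the side where $\mathbf{A}_i\x + t_i(z-\mathbf{w}^\T\x)\le b_i$. Every point of $\tilde{P}$ satisfies it, since there $z=\mathbf{w}^\T\x$ and $\mathbf{A}_i\x\le b_i$. The apex $\y^*$ satisfies it with equality. By convexity all of $Q$ satisfies it, so this is the outer normal, and the same halfspace appears among the rows of $A'\y\le\mathbf{b}'$ up to positive scaling.

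Flipping the base inequality does not change these lateral halfspaces. Hence every facet of $\tilde{Q}$ has a normal with positive last coordinate, provided each of these inequalities actually defines a facet of $\tilde{Q}$. Irredundancy is not needed for the claim, since any facet's outer normal is among these rows. The main point requiring care is the orientation and sign bookkeeping in this last step, which hinges on $\x_0$ being strictly interior and $M+1 > \mathbf{w}^\T\x_0$.
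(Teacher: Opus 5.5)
Your proof is correct and is essentially the paper's argument written in coordinates: the paper notes that $\y^*$ lies on every lateral facet hyperplane, while the point $\y_0=(\x_0,\w^\T\x_0)$ directly below it strictly satisfies each lateral inequality, so $(\y_0-\y^*)^\T\n_i<0$ with $\y_0-\y^*$ a negative multiple of $\e_{d+1}$, and your formula $t_i=(b_i-\mathbf{A}_i\x_0)/(M+1-\w^\T\x_0)$ is exactly this identity made explicit. One small slip is that the normal of $\mathbf{A}_i\x+t_i(z-\w^\T\x)=b_i$ is $(\mathbf{A}_i-t_i\w,\,t_i)$, not $(\mathbf{A}_i,t_i)$; this does not affect the conclusion, since only the last coordinate $t_i$ matters.
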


\begin{proof}
By (\ref{def: Q tilde}), the facets of $\tilde{Q}$ fall into two categories. One facet ($\tilde{P}$) is restricted by the inequality $\mathbf{w}^{\intercal} \mathbf{x} \geq z$, so it has the strictly upward-pointing outer normal $(-\mathbf{w}, 1)$. Each remaining facet is given by a row of the inequality $A'\y \leq \mathbf{b}'$. At row $i$, an outer normal vector of the inequality $A'\y \leq \mathbf{b}'$ is just $\n_i$, $i$-th row of $A'$. By construction, the apex $\mathbf{y}^{\ast} = (\x_0, M+1)$ lies on the supporting hyperplanes of all of these facets, and there is a point $\y_0 = (\x_0, \w^\T\x_0) \in \text{int}(\tilde{P})$ directly underneath it.
Hence there is a vertical downward-pointing vector from $\y^*$ down to $\y_0$ given by $\alpha \e_{d+1}$ with $\alpha < 0$. Because this vector points from a boundary point ($\y^*$) into the interior of the side constraints ($\y_0$), its inner product with each outer normal $\n_i$ must be strictly negative, so
\[ (\y_0 - \mathbf{y}^{\ast})^\T \mathbf{n}_i = \alpha \e_{d+1}^\T \n_i< 0 \]
Since $\alpha < 0$, it follows that the final coordinate of the normal vector must be positive for each $i$, as desired.
\end{proof}

Since all the outer normals for $\tilde{Q}$ point upwards, starting from any point $\y\in\tilde{Q}$ and shooting a ray down in the $-\e_{d+1}$ direction always yields negative inner product with every outer normal vector, therefore the ray never escapes $\tilde{Q}$. This implies:

\begin{cor}
The polyhedron $\tilde{Q}$ is unbounded.
\end{cor}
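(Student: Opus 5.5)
The plan is to exhibit an explicit ray inside $\tilde{Q}$ that never leaves it. The natural choice is the downward vertical ray from the interior point $\y_0$. Unboundedness follows at once from the existence of a nonzero recession direction, since a polytope contains no infinite ray.

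The first step is to fix any point $\y \in \tilde{Q}$; concretely I would take $\y = \y_0 = (\x_0, \w^\T\x_0)$, which satisfies every defining inequality of $\tilde{Q}$. The second step is to show that $\y - t\e_{d+1} \in \tilde{Q}$ for every $t \geq 0$. This uses the facet description in (\ref{def: Q tilde}). For each row $\n_i$ of $A'$, Lemma \ref{lem:allpositive} gives $\n_i^\T \e_{d+1} > 0$. Therefore
\[\n_i^\T(\y - t\e_{d+1}) = \n_i^\T \y - t\,\n_i^\T\e_{d+1} \leq b'_i.\]
For the flipped inequality $\w^\T\x \geq z$, moving down only decreases $z$ while leaving $\x$ unchanged, so this inequality continues to hold. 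Equivalently, its outer normal $(-\w,1)$ also has positive last coordinate, and the same computation applies.

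The third step concludes. The set $\{\y - t\e_{d+1} : t \geq 0\}$ lies in $\tilde{Q}$ and contains points of arbitrarily large norm, so $\tilde{Q}$ is unbounded. In recession-cone language, $-\e_{d+1}$ satisfies $A''(-\e_{d+1}) \leq 0$ for the full constraint matrix $A''$ of $\tilde{Q}$, and a polyhedron with a nonzero recession direction is unbounded.

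The only step that needs care is ensuring that $\tilde{Q}$ is nonempty, so that a starting point exists. The point $\y_0$ handles this. It lies in $Q$ and satisfies $\w^\T\x_0 = z$ with equality, so it meets both the constraints $A'\y \leq \mathbf{b}'$ and the flipped constraint. Nothing here is a real obstacle.
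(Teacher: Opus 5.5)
Your proposal is correct and is essentially the paper's argument: the paper also observes that every outer normal of $\tilde{Q}$ has positive last coordinate (Lemma \ref{lem:allpositive}), so the ray in direction $-\e_{d+1}$ from any point of $\tilde{Q}$ never leaves it. Your explicit check that $\y_0 = (\x_0,\w^\T\x_0)$ lies in $\tilde{Q}$, which guarantees a starting point exists, is a detail the paper leaves implicit.
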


In the rest of this paper, we will often embed $\x\in \R^d$ into the hyperplane at height $1$ in $\R^{d+1}$. Let's introduce some clean notation for this lifting.

\begin{notation}
For $\x\in \R^d$, we denote the lift of $\x$ into affine slice $\{ y_{d+1} =1\} \subset \R^{d+1}$ by
   \[\bar{\x} := (\x, 1). \]
   Similarly, we denote the horizontal ray starting at the point $\e_{d+1}$ and extending in the direction of $\x$ within the slice by
   \[ \r(\x) := \{(\lambda \x, 1): \lambda\geq 0 \}. \]
\end{notation}

We now try to understand the $2$-dimensional projection $\pi(\x) = (\c^\T\x, \w^\T\x)$. Recall from Corollary \ref{cor:upperhull} that every vertex $\v\in P$ is the unique maximizer of linear function $(\w + \lambda_\v \c )^\T\x$ over $P$ for some $\lambda_\v$.

\begin{lem}
\label{lem:liftupperhull}
Let $\mathbf{v}$ be a vertex of $P$, and suppose $\mathbf{v}$ is in the upper hull of $\pi(P)$. Then the lifted vertex $\y_\v=(\mathbf{v}, \mathbf{w}^{\intercal} \mathbf{v})$ is the unique maximizer of linear function $\overline{\lambda_{\mathbf{v}} \mathbf{c}}^\T\y$ over $\tilde{Q}$.
\end{lem}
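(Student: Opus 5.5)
The plan is to parametrize $\tilde{Q}$ explicitly as a union of rays shot from the apex $\mathbf{y}^{\ast}$ through the lifted polytope $\tilde{P}$. Once that is in place, the objective $\overline{\lambda_{\mathbf{v}}\mathbf{c}}^{\T}\mathbf{y}$ becomes an affine function of the ray parameter, and we will show its slope is strictly negative. This reduces the maximization over $\tilde{Q}$ to a maximization over $\tilde{P}$, where the claim is just the defining property of $\lambda_{\mathbf{v}}$.

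First, we identify the polyhedral cone $C=\{\mathbf{y} : A'\mathbf{y}\le \mathbf{b}'\}$. Every inequality of $A'\mathbf{y}\le\mathbf{b}'$ defines a facet of the pyramid $Q$ other than its base $\tilde{P}$, so every such hyperplane passes through the apex $\mathbf{y}^{\ast}$. Hence $C$ is the cone with apex $\mathbf{y}^{\ast}$ spanned by $Q$, namely $C=\{\mathbf{y}^{\ast}+t(\mathbf{p}'-\mathbf{y}^{\ast}) : \mathbf{p}'\in\tilde{P},\ t\ge 0\}$. The hyperplane $H$ contains $\tilde{P}$ and strictly separates $\mathbf{y}^{\ast}$ from the far side. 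Intersecting $C$ with the half-space $\mathbf{w}^{\T}\mathbf{x}\ge z$ therefore keeps exactly the parameters $t\ge 1$. Writing $\mathbf{p}'=(\mathbf{p},\mathbf{w}^{\T}\mathbf{p})$ with $\mathbf{p}\in P$, every point of $\tilde{Q}$ has the form
\[
\mathbf{y}(t,\mathbf{p})=\bigl(\mathbf{x}_0+t(\mathbf{p}-\mathbf{x}_0),\ M+1+t(\mathbf{w}^{\T}\mathbf{p}-M-1)\bigr),\qquad t\ge 1,\ \mathbf{p}\in P.
\]

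Next, we fix $\lambda=\lambda_{\mathbf{v}}$ and compute $f(t,\mathbf{p})=\overline{\lambda\mathbf{c}}^{\T}\mathbf{y}(t,\mathbf{p})$. This gives a constant, $\lambda\mathbf{c}^{\T}\mathbf{x}_0+M+1$, plus $t$ times the slope
\[
s(\mathbf{p})=\lambda\mathbf{c}^{\T}(\mathbf{p}-\mathbf{x}_0)+\mathbf{w}^{\T}\mathbf{p}-M-1 .
\]
The key step, which is where the choice of $M$ enters, is showing $s(\mathbf{p})<0$ for every $\mathbf{p}\in P$. Since $\mathbf{v}$ maximizes $(\mathbf{w}+\lambda\mathbf{c})^{\T}\mathbf{x}$ over $P$, we have $s(\mathbf{p})\le s(\mathbf{v})$. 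By the definition of $M$, both $\mathbf{w}^{\T}\mathbf{v}\le M/2$ and $\lambda_{\mathbf{v}}\mathbf{c}^{\T}(\mathbf{v}-\mathbf{x}_0)\le M/2$ hold. Hence $s(\mathbf{v})\le M/2+M/2-M-1=-1<0$.

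Finally, negative slope means that for each fixed $\mathbf{p}$ the function $f(t,\mathbf{p})$ is strictly decreasing in $t$ on $[1,\infty)$. Thus $f(t,\mathbf{p})<f(1,\mathbf{p})$ whenever $t>1$, and in particular the objective is bounded above on $\tilde{Q}$. At $t=1$ we have $\mathbf{y}(1,\mathbf{p})=(\mathbf{p},\mathbf{w}^{\T}\mathbf{p})$ and $f(1,\mathbf{p})=(\mathbf{w}+\lambda\mathbf{c})^{\T}\mathbf{p}$. Over $P$ this is uniquely maximized at $\mathbf{p}=\mathbf{v}$ by the choice of $\lambda_{\mathbf{v}}$. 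Combining the two strict inequalities, every $\mathbf{y}\in\tilde{Q}$ other than $\mathbf{y}_{\mathbf{v}}$ has strictly smaller objective value, which proves uniqueness.

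The main obstacle is the first step: justifying that $\tilde{Q}$ is exactly the set of points $\mathbf{y}(t,\mathbf{p})$ with $t\ge1$, that is, that removing the base inequality of $Q$ leaves precisely the apex cone. The argument is that in an irredundant description of a pyramid, every facet other than the base contains the apex. The remainder of the proof is the slope bound above.
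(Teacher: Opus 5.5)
Your proof is correct, but it is global where the paper's is local. The paper works at the vertex $\mathbf{y}_{\mathbf{v}}$. From the cone picture it reads off that the edge directions of $\tilde{Q}$ there are the edges of $\tilde{P}$ at $\mathbf{y}_{\mathbf{v}}$ plus the single unbounded ray in direction $\mathbf{y}_{\mathbf{v}}-\mathbf{y}^{\ast}$. It checks strict decrease along the edges via the choice of $\lambda_{\mathbf{v}}$, and along the ray via $\overline{\lambda_{\mathbf{v}}\mathbf{c}}^{\intercal}\mathbf{y}^{\ast}>\overline{\lambda_{\mathbf{v}}\mathbf{c}}^{\intercal}\mathbf{y}_{\mathbf{v}}$, which comes from the size of $M$. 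It then uses the simplex-style fact that a vertex at which every edge direction strictly decreases the objective is the unique maximizer.

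You instead describe all of $\tilde{Q}$ as the truncated cone $\{\mathbf{y}^{\ast}+t(\mathbf{p}'-\mathbf{y}^{\ast}) : t\ge 1,\ \mathbf{p}'\in\tilde{P}\}$. Your slope $s(\mathbf{p})$ is the objective at $\mathbf{p}'$ minus the objective at $\mathbf{y}^{\ast}$, so $s(\mathbf{v})<0$ is exactly the paper's key inequality, and $s(\mathbf{p})\le s(\mathbf{v})$ extends it to every ray at once.

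Your route does not need the edge structure of $\tilde{Q}$ at $\mathbf{y}_{\mathbf{v}}$ or the local-to-global optimality principle on an unbounded polyhedron, and it makes boundedness of the objective explicit. The paper's route is shorter and needs the estimate only at $\mathbf{v}$.

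The one step you should write out is the inclusion $\{A'\mathbf{y}\le\mathbf{b}'\}\subseteq\mathbf{y}^{\ast}+\mathrm{cone}(Q-\mathbf{y}^{\ast})$. Take such a $\mathbf{y}$ and consider $\mathbf{y}^{\ast}+\epsilon(\mathbf{y}-\mathbf{y}^{\ast})$. It satisfies every row of $A'\mathbf{y}\le\mathbf{b}'$ for all $\epsilon\ge 0$, since all these rows are tight at $\mathbf{y}^{\ast}$. It also satisfies the base inequality $\mathbf{w}^{\intercal}\mathbf{x}\le z$ for small $\epsilon$, because that holds strictly at $\mathbf{y}^{\ast}$. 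So the point lies in $Q$; this is the standard fact that the tangent cone at a vertex is cut out by the constraints tight there.
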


\begin{proof}
We will show that moving along any edge/ray direction on $\tilde{Q}$ emanating from $\y_\v$ strictly decreases the value of $\overline{\lambda_{\mathbf{v}} \mathbf{c}}^\T\y$. By construction, $\tilde{Q}$ is the intersection of the cone over $\tilde{P}$ with a halfspace and $\y_\v$ is a vertex of $\tilde{P}$. Therefore the directions going out of $\y_\v$ fall into the following two categories: 

First, the neighboring vertices of $\y_\v$ in $\tilde{Q}$ are precisely its neighbors in $\tilde{P}$. Since $\v$ uniquely maximizes $(\w+ \lambda_\v\c )^\T \x$ over $P$, we have $\y_\v$ uniquely maximized $\overline{\lambda_{\mathbf{v}} \mathbf{c}}^\T\y$ in $\tilde{P}$. Therefore all the edges from $\y_\v$ in $\tilde{P}$ strictly decreases the objective function.

Second, the only one ray on the boundary of $\tilde{Q}$ that starts at $\y_\v$ is the ray that previously come from the apex $\mathbf{y}^{\ast}$ and go through $\y_\v$, so the direction of this ray is $\y_\v - \y^*$. To show the objective strictly decreases along this ray from $\y_\v$, it suffices to show that the objective value is strictly higher at $\y^*$ than at $\y_\v$. Evaluating at both points, we have:
\begin{align*}
\overline{\lambda_\v\c}^{\T} \y^* &= \lambda_\v\c^\T\x_0 + M+1; \\
\overline{\lambda_\v\c}^{\T} \y_\v &= \lambda_\v\c^\T\v + \w^\T\v. 
\end{align*}

 In Construction \ref{construction: Q tilde}, $M$ is defined to be sufficiently large:
\begin{align*}
  M &= 2\max(\{\mathbf{w}^{\intercal} \mathbf{x}: \mathbf{x} \in P\} \cup \{\lambda_{\mathbf{v}}\mathbf{c}^{\intercal}(\mathbf{v}-\mathbf{x}^{\ast}): \mathbf{v} \in U(P)\},0)\\
  &\geq \lambda_\v \c^\T(\v-\x_0) + \w^{\T}\v.
\end{align*}
It follows then that the objective value at $\y^*$ is strictly greater than at $\y_\v$. Since the feasible ray here on $\tilde{Q}$ moves away from $\y^*$ in the direction $\y_\v - \y^*$, the objective strictly decreases along it.

We have shown that the objective function strictly decreases along all local direction from $\y_\v$, so $\y_\v$ is the unique maximizer.
\end{proof}

To summarize, we have:

\begin{lem}
\label{lem:theseed}
For each $d \geq 2$, there exists a $(d+1)$-dimensional polyhedron $\tilde{Q}$ in $\mathbb{R}^{d+1}$and $\mathbf{c} \in \mathbb{R}^{d}$ such that
\begin{enumerate}
  \item $\tilde{Q}$ has $2d+1$ facets
  \item all outer normals of $\tilde{Q}$ point upward.
  \item the number of unique maximizers of $\overline{\lambda \mathbf{c}}^\T \y$ is at least $2^{d-1}$.
  \item there is a unique vertex $\v^*$ that maximizes $\e_{d+1}$.
\end{enumerate}
 In particular, $\tilde{Q}$ is a seed polyhedron in Definition \ref{def: seed}.
\end{lem}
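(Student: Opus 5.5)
We take $\tilde{Q}$ to be the polyhedron of Construction \ref{construction: Q tilde}, built from the polytope $P$ and the vectors $\mathbf{c},\mathbf{w}$ given by Corollary \ref{cor:upperhull}. We then check the four conditions in order.

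\emph{Condition (1).} The pyramid $Q$ has the facets of the cone over $\tilde{P}$, one for each of the $2d$ facets of $P$, plus the base $\tilde{P}$. Flipping the base inequality keeps all of these inequalities. So $\tilde{Q}$ has at most $2d+1$ facets. To see that each is irredundant, note that near a point in the relative interior of a side facet of the cone at height slightly below $H$, only that inequality is tight. The flipped inequality $\mathbf{w}^{\intercal}\mathbf{x}\ge z$ is irredundant because $\tilde{P}$ itself is a $d$-dimensional face of $\tilde{Q}$. This gives exactly $2d+1$ facets.

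\emph{Condition (2).} This is Lemma \ref{lem:allpositive}.

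\emph{Condition (3).} By Lemma \ref{lem:liftupperhull}, each of the at least $2^{d-1}$ vertices $\mathbf{v}\in U(P)$ lifts to a vertex $\mathbf{y}_{\mathbf{v}}$ of $\tilde{Q}$. This vertex is the unique maximizer of $\overline{\lambda_{\mathbf{v}}\mathbf{c}}^{\intercal}\mathbf{y}$, where $\lambda_{\mathbf{v}}>0$. The lifts of distinct vertices are distinct. Hence the ray $\mathbf{r}(\mathbf{c})$ meets the interiors of at least $2^{d-1}$ distinct normal cones of $\tilde{Q}$, which is the exponential count required in Definition \ref{def: seed}.

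There is one subtlety to check. Lemma \ref{lem:liftupperhull} uses that $\mathbf{v}$ is the unique maximizer of $(\mathbf{w}+\lambda_{\mathbf{v}}\mathbf{c})^{\intercal}\mathbf{x}$ on $P$. On $\tilde{P}$, the objective $\overline{\lambda_{\mathbf{v}}\mathbf{c}}^{\intercal}\mathbf{y}=\lambda_{\mathbf{v}}\mathbf{c}^{\intercal}\mathbf{x}+\mathbf{w}^{\intercal}\mathbf{x}$ equals exactly this function. The inequality $M\ge \lambda_{\mathbf{v}}\mathbf{c}^{\intercal}(\mathbf{v}-\mathbf{x}_0)+\mathbf{w}^{\intercal}\mathbf{v}$ also holds, because $M$ is twice a maximum that dominates both summands, each of which is nonnegative or bounded by that maximum. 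So the cited lemma applies as stated.

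\emph{Condition (4).} This is the step needing a new argument. The candidate is $\mathbf{v}^*=\mathbf{y}_{\mathbf{v}_0}$, where $\mathbf{v}_0$ is the vertex of $P$ maximizing $\mathbf{w}$. The apex $\mathbf{y}^*$ is no longer in $\tilde{Q}$, and every point of $\tilde{Q}$ satisfies $z\le \mathbf{w}^{\intercal}\mathbf{x}$. So it suffices to show that $\max_{\tilde{Q}} z$ is attained on $\tilde{P}$.

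To see this, write any point of $\tilde{Q}$ as $\mathbf{y}^*+t(\mathbf{p}-\mathbf{y}^*)$ with $\mathbf{p}\in\tilde{P}$ and $t\ge1$. Since $\mathbf{y}^*$ is strictly higher than every point of $\tilde{P}$, the height $z$ strictly decreases in $t$. Hence the maximum of $z$ is attained on $\tilde{P}$, where it equals $\max_P \mathbf{w}^{\intercal}\mathbf{x}$.

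For uniqueness, we need $\mathbf{w}$ to have a unique maximizer on $P$. This holds if $\mathbf{w}$ is generic. By Corollary \ref{cor:upperhull} and the replacement $\mathbf{w}\mapsto\mathbf{w}-\lambda\mathbf{c}$, we may perturb $\mathbf{w}$ slightly so that it is generic without destroying the other properties. Alternatively, we can take $\mathbf{v}_0$ to be the vertex of $U(P)$ at the end of the upper hull corresponding to $\lambda\to0^+$. This is the expected main obstacle: making sure the genericity of $\mathbf{w}$ is compatible with the earlier choices. Once that is settled, all four conditions hold, and $\tilde{Q}$ is a seed polyhedron.
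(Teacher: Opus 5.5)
Your proposal is correct and is essentially the paper's proof. It uses the same construction, Lemma~\ref{lem:allpositive} for (2) and Lemma~\ref{lem:liftupperhull} for (3); for (4) the paper simply reapplies Lemma~\ref{lem:liftupperhull} with $\lambda_{\mathbf{v}}=0$, and your parametrization $\mathbf{y}^*+t(\mathbf{p}-\mathbf{y}^*)$, $t\ge 1$, reproves the same fact directly. Your irredundancy check for (1) also fills in what the paper only asserts.

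The paper also needs the $\mathbf{w}$-maximizer on $P$ to be unique, and likewise only asserts it; your small-perturbation fix settles this. Your alternative of choosing $\mathbf{v}_0$ as an endpoint of $U(P)$ would not work, since your own argument shows that the lift of every $\mathbf{w}$-maximizer of $P$ maximizes $\mathbf{e}_{d+1}$ on $\tilde{Q}$.
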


\begin{proof}
Take the $(d+1)$-dimensional polyhedron $\tilde{Q}$ as constructed: it has $2d+1$ facets. Lemma \ref{lem:allpositive} shows that all outer normals point upward, and Lemma \ref{lem:liftupperhull} proves that each of the $\geq 2^{d-1}$ vertices in $U(P)$ is lifted to a unique maximizer of $\overline{\lambda\c}^\T \y$ over $\tilde{Q}$. Finally, since by Corollary \ref{cor:upperhull}, $P$ has a unique $\w$-maximal vertex, $\tilde{P}$ has a unique $\e_{d+1}$-maximizer by applying Lemma \ref{lem:liftupperhull} in the case where $\lambda_{\mathbf{v}} = 0$.

\end{proof}

\subsection{Broadening the Candidates}

 The first key observation for our construction is that the choice of $\mathbf{c}$ such that the number of maximizers of linear function $\overline{\lambda \c}^\T \x$ is at least $2^{d-1}$ is not unique. In fact, if one such vector exists, there is a full dimensional cone of vectors that also work. Recall that we defined a cone to be upward facing if each of its rays has positive final coordinate. 

\begin{lem}
\label{lem:broadening}
Let $Q \subseteq \mathbb{R}^{d+1}$ be a polyhedron with a unique vertex $\v^*$ maximizing the last coordinate. Suppose there exists $\mathbf{c} \in \mathbb{R}^{d}$ such that the ray $\r(\c) = \{(\lambda\mathbf{c},1): \lambda \geq 0\}$ intersects the interior of $\alpha$ upward facing normal cones of $Q$. Then there exists a $d$-dimensional simplicial cone $C \subseteq \mathbb{R}^{d}$ such that $\r(\w)$ intersects $\alpha$ upward facing normal cones of $Q$ for all $\mathbf{w} \in C \setminus \{\mathbf{0}\}$.

\end{lem}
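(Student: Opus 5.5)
This is a perturbation-and-compactness argument; the real care is at the origin of $\r(\w)$ and at infinity.

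First I reduce the ray to a compact segment. Let $N^*$ be the normal cone of $\v^*$. Since $\v^*$ is the unique maximizer of $\e_{d+1}$, the vector $\e_{d+1}$ lies in the interior of $N^*$. Hence there is $\delta>0$ with $(\mathbf{u},1)\in \mathrm{int}(N^*)$ whenever $|\mathbf{u}|<\delta$. This ball already accounts for one of the $\alpha$ cones, namely $N^*$. I will write $\alpha$ for the number of cones met by $\r(\c)$, counting $N^*$.

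Second, I pick witnesses along $\r(\c)$. For each of the $\alpha$ upward facing normal cones $N_1,\dots,N_\alpha$ met in their interior, choose $\lambda_i\ge 0$ with $(\lambda_i\c,1)\in\mathrm{int}(N_i)$. For $N^*$ take $\lambda=0$. Interiors of open sets are stable, so there is $\epsilon>0$ such that $(\mathbf{u},1)\in \mathrm{int}(N_i)$ whenever $|\mathbf{u}-\lambda_i\c|<\epsilon$. Let $\Lambda=\max_i\lambda_i$.

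Third, I build the cone. Take any $d$-dimensional simplicial cone $C$ around the ray $\R_{\ge0}\c$ whose angular aperture is small. Concretely, take generators $\c+\eta\mathbf{b}_j$, where $\mathbf{b}_1,\dots,\mathbf{b}_d$ are chosen so that these vectors span and contain $\c$ in the interior of their cone, and $\eta$ is small. For $\w\in C\setminus\{\mathbf{0}\}$, normalize so that $|\w|=|\c|$. Then $|\w-\c|\le\theta|\c|$, where $\theta$ is the aperture. For each $i$ I choose $\mu_i=\lambda_i$, which gives $|\lambda_i\w-\lambda_i\c|\le\Lambda\theta|\c|<\epsilon$ once $\theta$ is small. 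So $\r(\w)$ passes through the interior of every $N_i$, giving at least $\alpha$ cones.

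If the statement is meant as "exactly $\alpha$", I would also need the reverse inequality. For that, I would note that the ray $\r(\c)$ eventually lies in one terminal cone. Beyond some $\lambda$ it stays in the cone containing $(\lambda\c,1)$ as $\lambda\to\infty$, i.e.\ the cone containing $(\c,0)$ in its closure. I would treat genericity by assuming $(\c,0)$ lies in the interior of the recession-relevant cone, or else simply interpret the claim as "at least $\alpha$", which is all that is used later.

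The main obstacle is this tail: controlling uniformity at infinity, where small angular perturbations become large absolute perturbations. The "at least" direction avoids it, because only the finitely many witness points $\lambda_i\le\Lambda$ matter, and compactness of $[0,\Lambda]$ makes a single $\epsilon$ and $\theta$ work for all of them. Rescaling $\w$ is harmless, since $\r(\w)$ depends only on the direction of $\w$.
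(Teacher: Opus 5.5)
Your proposal is correct and is essentially the paper's argument: the paper also picks one witness point of $\r(\c)$ inside each $\mathrm{int}(N_i)$, surrounds it by an open ball contained in $\mathrm{int}(N_i)$, and takes a simplicial cone inside the open set of directions whose rays from $\e_{d+1}$ pass through all these balls, which is exactly what your explicit aperture bound $\theta < \epsilon/(\Lambda|\c|)$ achieves. As in the paper, only the ``at least $\alpha$'' reading is proved or needed, so your digression on exact counts and your preliminary $\delta$-ball around $\e_{d+1}$ can simply be dropped.
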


\begin{proof}
Let $N_1, N_2, \dots, N_\alpha$ be the $\alpha$ upward facing cones whose interiors are intersected by $\r(\c)$. Since these interiors are open, there is sufficient local wiggle room to perturb the ray while still maintaining the intersections. 

Specifically, intersect all these cones with the hyperplane $y_{d+1}=1$ to get a polyhedral complex containing the ray $\r(\c)$. For each $i$, the intersection of $\r(\c)$ with the interior of $N_i$ forms an open line segment. Pick a point $\x_i$ in the interior of this segment. Since $\x_i \in \text{int}(N_i)$, we can pick a small open ball $B_i$ centered at $\x_i$ that remains entirely in $\text{int}(N_i)$. Let $R_i$ be the cone of all rays from the apex $\e_{d+1}$ to $B_i$. Each $R_i$ (excluding the apex $\e_{d+1}$) is open since $B_i$ is an open ball, and so is their intersection $R = \cap_{i=1}^\alpha R_i$. Furthermore, $R = \cap R_i$ is non-empty since it contains $\r(\c)$, hence it remains a $d$-dimensional cone, and by construction every ray in $R$ intersects the interiors of all of the $\alpha$ normal cones $N_1, \dots, N_\alpha$. We can simply choose $C$ to be any full dimensional simplicial cone in $R $.
\end{proof}

The cone could be arbitrarily narrow to begin with, so we apply an invertible linear transformation to stretch it and make that cone whatever we want. 

\begin{prop}
  Let $Q \subseteq \mathbb{R}^{d+1}$ be the polyhedron in Lemma \ref{lem:broadening}, and let $C'$ be \emph{any} $d$-dimensional simplicial cone in $\mathbb{R}^{d}$. There exists an invertible linear transformation $T:\mathbb{R}^{d+1} \to \mathbb{R}^{d+1}$ such that, over the polyhedron $Q' = T(Q)$, the vertex $T(\v^*)$ uniquely maximizes the last coordinate, and the ray $\r (\x)$ intersects $\alpha$ upward facing normal cones of $Q'$ for all $\x \in C'$.
\end{prop}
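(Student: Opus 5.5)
We plan to build $T$ from an invertible linear map $S:\mathbb{R}^d\to\mathbb{R}^d$ that sends the simplicial cone $C$ of Lemma \ref{lem:broadening} onto $C'$, and then lift $S$ to $\mathbb{R}^{d+1}$ acting on the fan. Since $C$ and $C'$ are both $d$-dimensional simplicial cones, write $C=\mathrm{cone}(\u_1,\dots,\u_d)$ and $C'=\mathrm{cone}(\u'_1,\dots,\u'_d)$ with both generator sets bases of $\mathbb{R}^d$. Let $S$ be the unique linear map with $S\u_i=\u'_i$; it is invertible and $S(C)=C'$. Define $\hat S:=\begin{pmatrix} S & 0\\ 0 & 1\end{pmatrix}$ on $\mathbb{R}^{d+1}$, and set $T:=(\hat S^{-1})^{\T}$ and $Q'=T(Q)$. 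By Lemma \ref{lem:normalfantransform}, $\mathcal{N}(Q')=\hat S(\mathcal{N}(Q))$: each normal cone $N_\v$ of $Q$ maps to the normal cone $\hat S N_\v$ of the vertex $T\v$ of $Q'$.

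The key properties then follow in order. First, $\hat S$ fixes the last coordinate, so it maps upward facing cones to upward facing cones and preserves the slice $\{y_{d+1}=1\}$. On that slice it acts by $\bar{\x}\mapsto \overline{S\x}$, so $\hat S(\r(\w))=\r(S\w)$. Second, take $\x\in C'\setminus\{\mathbf{0}\}$ and put $\w=S^{-1}\x\in C\setminus\{\mathbf 0\}$. By Lemma \ref{lem:broadening}, $\r(\w)$ meets the interiors of the $\alpha$ upward facing cones $N_1,\dots,N_\alpha$. Since $\hat S$ is a homeomorphism, $\r(\x)=\hat S(\r(\w))$ meets the interiors of the $\alpha$ distinct upward facing normal cones $\hat S N_1,\dots,\hat S N_\alpha$ of $Q'$. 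The degenerate case $\x=\mathbf 0$ is excluded, as in Lemma \ref{lem:broadening}.

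Third, we check the unique maximizer of the last coordinate. We have $\e_{d+1}\in\mathrm{int}(N_{\v^*})$, since $\v^*$ is the unique maximizer and the fan is full-dimensional near it. Also $\hat S\e_{d+1}=\e_{d+1}$, so $\e_{d+1}\in \mathrm{int}(\hat S N_{\v^*})$, which is the normal cone of $T\v^*$. Hence $T\v^*$ uniquely maximizes $\e_{d+1}$ over $Q'$. Equivalently, $\e_{d+1}^\T T\y=(T^\T\e_{d+1})^\T\y=(\hat S^{-1}\e_{d+1})^\T\y=y_{d+1}$, so $T$ preserves the last-coordinate objective exactly.

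The main point needing care is that Lemma \ref{lem:normalfantransform} is stated for polytopes, while $Q$ may be an unbounded polyhedron. We will note that its proof only manipulates the inequality description $A\y\le\mathbf b$, and that the normal cones of vertices are cones of tight outer normals, as in Proposition \ref{prop: maximizer vs normal cone}. The argument therefore applies verbatim to polyhedra, and we restrict attention to vertex normal cones. Everything else is the bookkeeping that $\hat S$ respects the slice at height $1$ and upwardness.
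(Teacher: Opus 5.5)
Your proposal is correct and is essentially the paper's proof. The paper defines $S$ on $\mathbb{R}^{d+1}$ by $S(\e_{d+1})=\e_{d+1}$ and $S(\mathbf{a}_i,0)=(\mathbf{a}_i',0)$, which is exactly your block-diagonal $\hat S$; it likewise sets $T=(S^{-1})^{\T}$ and uses Lemma \ref{lem:normalfantransform} to carry the normal cones and the height-$1$ rays across. Your version also makes explicit several points the paper uses tacitly: that interiors map to interiors, that $\x=\mathbf{0}$ must be excluded, and that Lemma \ref{lem:normalfantransform} applies verbatim to unbounded polyhedra.
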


\begin{proof}
  Let $C$ be the simplicial cone of vectors, each intersecting $\alpha$ upward facing normal cones of $Q$ from Lemma \ref{lem:broadening}. Let $\mathbf{a}_{1}, \dots, \mathbf{a}_{d}$ be the ray generators of $C$, and let $\mathbf{a}_{1}', \dots, \mathbf{a}_{d}'$ be the generators of the target cone $C'$. Define a linear transformation $S:\mathbb{R}^{d+1} \to \mathbb{R}^{d+1}$ by $S(\e_{d+1}) = \e_{d+1}$, and $S(\mathbf{a}_{i},0) = (\mathbf{a}_{i}',0)$ for all $1\leq i\leq d$. Then $S(C) = C'$ ($C$, $C'$ are embedded in $\{y_{d+1} =0\} \subset \R^{d+1}$). 

Define $T:= (S^{-1})^\T$, and 
\[Q' := T(Q) = (S^{-1})^\T(Q).\] By Lemma \ref{lem:normalfantransform}, each normal cone of $Q'$ is precisely $S(N)$ where $N$ is a normal cone of $Q$. It follows that for any nonzero vector $\x \in C'$, its preimage is $S^{-1}(\x,0) \in C'$, and so the ray $\r(S^{-1}(\x))$ intersects the interior of a normal cone $N$ of $Q$. So the ray $\r(\x)$ intersects the interior of $S(N)$ for any normal cone $N$ that $\w$ intersects. Finally, since $S$ preserves the last coordinate, so does its inverse transpose $T$. The last coordinate of every vertex of $Q$ is preserved, so $T(\v^*)$ remain the maximizer in $Q'$, and each of the $\alpha$ normal cones remain upward facing.
\end{proof}

\begin{figure}
  \centering
  
\[\begin{tikzpicture}[scale = 1.25]
  \draw[<->] (-1.5,0) -- (1.5,0);
  \draw[<->] (0,-1.5) -- (0,1.5);
  
  
  \draw[thick, ->]
  (0,0) -- (1.5,1);
   \draw[thick, ->]
  (0,0) -- (1.25,1);
  \draw[thick, ->]
  (0,0) -- (-1.25,1);
  \draw[thick, ->]
  (0,0) -- (-1.5, 1);

\end{tikzpicture}
\hskip.2in
\begin{tikzpicture}[scale = 1.25]
  \draw[<->] (-1.5,0) -- (1.5,0);
  \draw[<->] (0,-1.5) -- (0,1.5);
  
  
  \draw[thick, ->]
  (0,0) -- (1.5,1);
   \draw[thick, ->]
  (0,0) -- (1.25,1);
  \draw[thick, ->]
  (0,0) -- (-1.25,1);
  \draw[thick, ->]
  (0,0) -- (-1.5, 1);
  \draw[thick, ->, blue, dashed]
  (0,0) -- (90:1);

  \draw (0, 1) node[red, circle, fill, inner sep = 20pt, opacity = .3] {};
\end{tikzpicture} \hskip.2in
\begin{tikzpicture}[scale = 1.25]
  \draw[<->] (-1.5,0) -- (1.5,0);
  \draw[<->] (0,-1.5) -- (0,1.5);
  
  
  \draw[thick, ->]
  (0,0) -- (.5,1);
   \draw[thick, ->]
  (0,0) -- (.42,1);
  \draw[thick, ->]
  (0,0) -- (-.42,1);
  \draw[thick, ->]
  (0,0) -- (-.5, 1);
  \draw[thick, ->, blue, dashed]
  (0,0) -- (90:1);

  \draw (0, 1) node[red, circle, fill, inner sep = 20pt, opacity = .3] {};
\end{tikzpicture}\] 
  \caption{Pictured on the left is a normal fan of an unbounded polygon with $\e_{2}$ uniquely optimized at a vertex and all normals upward facing. The middle depicts an epsilon ball around $\e_{2}$. The right shows the result after applying a linear transformation rescaling the first coordinates to get each normal to be within $\varepsilon$ of $\e_{2}$.}
  \label{fig:Compress}
\end{figure}

\subsection{Shrinking the Fan} 
We have already constructed a polyhedron such that there is an arbitrarily large simplicial cone of directions that hit exponentially many of its normal cones. For our purposes we require every orthogonal direction to be like this, which is stronger. Our strategy is to glue together copies of the same polyhedron. However, one cannot do this directly. Instead, we will shrink and insert the normal fans inside one another in a matryoshka doll like fashion. This next lemma shows that we may flatten the polyhedron, which is equivalent to squeezing its normal fan.

\begin{lem}
\label{lem: new shrinking}
Let $P \subset \R^{d+1}$ be a polyhedron with vertex $\v^*$ that uniquely maximizes the last coordinate, and assume that all the outer normals of $P$ point upward. Then, for any arbitrarily small $\varepsilon > 0$, there exists a polyhedron $Q$ combinatorially equivalent to $P$ satisfying the following conditions:
\begin{enumerate}
  \item All generating rays of the normal fan of $Q$ are within an $\varepsilon$-neighborhood of the vertical vector $\e_{d+1}$.
  \item $Q$ retains the unique maximizer of the last coordinate, and outer normals remain up-pointing.
  \item For any $\x \in \mathbb{R}^{d}$, if the horizontal ray $\r(\x) = \{(\lambda \x,1)\; : \; \lambda\geq 0\}$ intersects the interiors of $\alpha$ maximal normal cones of $P$, then it intersects the interiors of the corresponding $\alpha$ maximal normal cones in $Q$.
\end{enumerate}
\end{lem}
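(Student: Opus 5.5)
Take $Q = T(P)$ with $T = \mathrm{diag}(1,\dots,1,c)$ for a small $0<c<1$, exactly as in Corollary \ref{cor: squeezing normal fan}. Then $T$ is an invertible linear map, so $Q$ is combinatorially equivalent to $P$. By Lemma \ref{lem:normalfantransform} (with the roles of $T$ and $(T^{-1})^{\T}$ matched appropriately), each normal cone of $Q$ is $S(N)$, where $N$ is the corresponding normal cone of $P$ and $S = \mathrm{diag}(1,\dots,1,1/c)$. The three conditions can then be checked one at a time.

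For (1), let $\n = (\n', n_{d+1})$ be an outer normal of $P$. By hypothesis it points upward, so $n_{d+1} > 0$. Its image is $S\n = (\n', n_{d+1}/c)$. After normalization, the horizontal part has norm $|\n'|/|S\n| \le c\,|\n'|/n_{d+1}$, which tends to $0$ as $c \to 0$. Since there are finitely many generating rays, one small $c$ works for all of them. This gives rays within $\varepsilon$ of $\e_{d+1}$ itself, and not merely of $\pm\e_{d+1}$; this is exactly where the upward hypothesis is used.

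For (2), the ray generators of $Q$'s fan have last coordinates scaled by $1/c>0$, so they remain upward. For the maximizer, $\e_{d+1}$ lies in the interior of $N_{\v^*}$ because $\v^*$ is the unique maximizer. Since $S\e_{d+1} = \e_{d+1}/c$ is a positive multiple of $\e_{d+1}$, it lies in the interior of $S(N_{\v^*})$. Hence $T(\v^*)$ uniquely maximizes the last coordinate on $Q$. One can also see this directly: $T$ scales heights by $c>0$.

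For (3), $S$ maps the point $(\lambda\x,1)$ to $(\lambda\x,1/c)$, which is a positive multiple of $(c\lambda\x,1)$. So $S$ carries the cone over $\r(\x)$ onto the cone over $\r(\x)$, reparametrized by $\lambda \mapsto c\lambda$. Because $S$ is a linear homeomorphism, it maps interiors of cones to interiors. So if $(\lambda\x,1)\in \mathrm{int}(N)$, then $(c\lambda\x,1)$, being a positive rescaling of $S(\lambda\x,1)$, lies in $\mathrm{int}(S(N))$. Normal cones are closed under positive scaling, which gives the $\alpha$ corresponding intersections.

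The main obstacle is bookkeeping rather than substance. One must apply Lemma \ref{lem:normalfantransform} in the right direction: flattening the polytope by $c$ stretches the fan by $1/c$. One must also check that the horizontal-ray condition is preserved exactly, not just approximately. Both are settled by the observation that $S$ fixes the first $d$ coordinates and positively scales the last one.
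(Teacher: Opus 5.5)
Your proposal is correct and follows essentially the same route as the paper: you flatten by $T=\mathrm{diag}(1,\dots,1,c)$, identify the normal cones of $Q$ as $S(N)$ with $S=\mathrm{diag}(1,\dots,1,1/c)$, and observe that $S$ sends the horizontal ray at height $1$ onto a positive rescaling of itself (the paper phrases this through $S^{-1}$, which sends $\r(\x)$ to the parallel ray at height $c$). Your explicit estimate for (1), which uses upwardness to get closeness to $+\e_{d+1}$ rather than merely to $\pm\e_{d+1}$, is a slightly more careful version of the paper's appeal to Corollary~\ref{cor: squeezing normal fan}.
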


\begin{proof}
Let $T$ be the flattening transformation from Corollary \ref{cor: squeezing normal fan} given by the matrix $\text{diag}(1, \dots, 1, c)$ for some $0 < c < 1$. Also define $Q= T(P)$. This map presses $P$ towards the horizontal hyperplane $y_{d+1}=0$. In the dual space, it ($(T^{-1})^\T$) squeezes the normal cones towards the vertical axis $\e_{d+1}$. By choosing $c$ sufficiently small, Corollary \ref{cor: squeezing normal fan} implies Condition (1) holds. Since $c>0$, the map $T$ strictly preserves the order of vertices in the last coordinate, so $T(\v^*)$ remains the vertex with maximal last coordinate in $Q$.

To verify the normal vectors, consider the dual space of $Q$. The normal cones of $Q$ are given by applying $S =(T^{-1})^{\T}$ to the normal cones of $P$, where the matrix of $S$ is $\text{diag}(1,1,\dots, 1, 1/c)$. Let $\n = (n_1, \dots, n_{d+1})$ be an outer normal vector of a facet of $P$, the corresponding normal vector of $Q$ is
\[ S(\n) = (n_1, \dots, n_d, \frac{1}{c}n_{d+1}).\]
It is upward pointing since both $c>0$ and $n_{d+1}>0$, so Condition 2 is satisfied.

We now track the ray $\r(\x)$. It intersects the cone $S(N)$ if and only if the inverse-transformed ray $S^{-1}(\r(\x))$ intersects the original normal cone $N$ of $P$. The matrix of $S^{-1} = T^\T$ is $\text{diag}(1, \dots, 1, c)$. Apply it to the ray we get
\[S^{-1}(\r (\x)) =\{(\lambda \x, c) :\; \lambda\geq 0 \},\] 
which is a parallel ray on the horizontal hyperplane $y_{d+1} = c$. Since each normal cone is invariant under positive scaling, the point $\y = (\lambda \x, c)$ is in $N$ if and only if $\{\frac{1}{c}\y = (\frac{\lambda}{c} \x, 1)\;: \; c>0\}$ also belong to $N$ (for all $c>0$, $\lambda\geq 0$. The set of points is exactly $\r(\x)$ itself! Therefore, the original ray $\r(\x)$ intersects $S(N)$ if and only if it intersects $N$. This preserves the exact sequence of maximal cones crossed, satisfying Condition 3.

Geometrically, the rays $\r(\x)$ and $S^{-1}(r(\x))$ lie in the same (quadrant of) $2$-dimensional subspace of $\R^{d+1}$ spanned by $\e_{d+1}$ and $(\x,0)$. We are slicing $\alpha$ upward-pointing normal cones with two parallel horizontal hyperplanes of $\R^{d+1}$. Because all normal cones are anchored at the origin, slicing the normal fan within this subspace at two different positive heights only scales the intersection pattern from the origin, therefore the two rays cross the exact same sequence of maximal cones.
\end{proof}

\subsection{Common Refinement without Minkowski Sums}

Given a pair of polyhedra $P$ and $Q$, one of the most well-known tools to build a new one whose normal fan refines both $P$ and $Q$ is to take their Minkowski sum $P+Q$ (see, for example, Proposition 7.12 in \cite{LecturesonPolytopes}). The Minkowski sum operation is a powerful tool for building complicated polyhedral fans. However, in general, the new polyhedron $P+Q$ can have exponentially more facets than $P$ and $Q$. For example, one can take a Minkowski sum of two sufficiently generic linear transformations of a cube. The result would be a generic zonotope with $2d$ generators and therefore have $2\binom{2d}{d-1}$ facets.

This is a problem for our complexity results as we want to build polytopes with relatively few facets. We overcome this challenge by introducing the following novel lemma that is a key tool for our construction method. See Figure \ref{fig:refinement} for a visual description of the idea.

\begin{figure}
  \centering
  \[
\begin{tikzpicture}[scale =1.5]
  \draw[<->] (-1.5,0) -- (1.5,0);
  \draw[<->] (0,-1.5) -- (0,1.5);
  \draw[thick] (-1,0) -- (-.5,.5) -- (0,.75) -- (.5,.5) -- (1,0);
  \draw[thick, ->] (1,0) -- (1.5,-.5);
  \draw[thick, ->] (-1,0) -- (-1.5,-.5);
  \draw (-.5, .5) node[red, circle, fill, inner sep = 1.5pt] {};
  \draw (0, .75) node[green, circle, fill, inner sep = 1.5pt] {};
  \draw (.5, .5) node[blue, circle, fill, inner sep = 1.5pt] {};
\end{tikzpicture}
\hskip.5in
\begin{tikzpicture}[scale = 1.5]
  \draw[<->] (-1.5,0) -- (1.5,0);
  \draw[<->] (0,-1.5) -- (0,1.5);
  
  
  \draw[thick, ->]
  (0,0) -- (45:1);
   \draw[thick, ->]
  (0,0) -- (70:1);
  \draw[thick, ->]
  (0,0) -- (135:1);
  \draw[thick, ->]
  (0,0) -- (110:1);
  \fill[red!30, opacity=0.4] (0,0) -- (135:1) -- (110:1) -- cycle;
  
  \fill[green!30, opacity=0.4] (0,0) -- (110:1) -- (70:1)--cycle;
  
  \fill[blue!30, opacity=0.4] (0,0) -- (70:1) -- (45:1)--cycle;
\end{tikzpicture}\]

 \[
\begin{tikzpicture}[scale =1.5]
  \draw[<->] (-1.5,0) -- (1.5,0);
  \draw[<->] (0,-1.5) -- (0,1.5);
  \draw[thick](-.5,.5) -- (0,1.25) -- (.5,.5);
  \draw[thick, ->] (.5,.5) -- (.75,-1.5);
  \draw[thick, ->] (-.5,.5) -- (-.75,-1.5);
  \draw (-.5, .5) node[yellow, circle, fill, inner sep = 1.5pt] {};
  \draw (0, 1.25) node[violet, circle, fill, inner sep = 1.5pt] {};
  \draw (.5, .5) node[gray, circle, fill, inner sep = 1.5pt] {};

\end{tikzpicture}
\hskip.5in
\begin{tikzpicture}[scale = 1.5]
  \draw[<->] (-1.5,0) -- (1.5,0);
  \draw[<->] (0,-1.5) -- (0,1.5);
  
  
  \draw[thick, ->]
  (0,0) -- (10:1);
   \draw[thick, ->]
  (0,0) -- (30:1);
  \draw[thick, ->]
  (0,0) -- (170:1);
  \draw[thick, ->]
  (0,0) -- (150:1);
  \fill[gray!30, opacity=0.4] (0,0) -- (10:1) -- (30:1) -- cycle;
  
  \fill[violet!30, opacity=0.4] (0,0) -- (30:1) -- (150:1)--cycle;
  
  \fill[yellow!30, opacity=0.4] (0,0) -- (150:1) -- (170:1)--cycle;
\end{tikzpicture}\]

 \[
\begin{tikzpicture}[scale =1.5]
  \draw[<->] (-1.5,0) -- (1.5,0);
  \draw[<->] (0,-1.5) -- (0,1.5);
  \draw[thick, ->] (.5,.5) -- (.75,-1.5);
  \draw[thick, ->] (-.5,.5) -- (-.75,-1.5);

  \draw[thick] (-.5,.5) -- (-.4,.65);
  \draw[thick] (.5,.5) -- (.4,.65);
  \draw[thick] (-.4,.65) --(-.25,.25+.55) -- (0,.375+.55) -- (.25,.25+.55)--(.4, .65);
  \draw (-.5, .5) node[yellow, circle, fill, inner sep = 1.5pt] {};
  \draw (.5, .5) node[gray, circle, fill, inner sep = 1.5pt] {};
  
  \draw (-.25, .8) node[red, circle, fill, inner sep = 1.5pt] {};
  \draw (0, .925) node[green, circle, fill, inner sep = 1.5pt] {};
  \draw (.25, .8) node[blue, circle, fill, inner sep = 1.5pt] {};
  \draw (-.4, .65) node[violet, circle, fill, inner sep = 1.5pt] {};
  \draw (.4, .65) node[violet, circle, fill, inner sep = 1.5pt] {};
\end{tikzpicture}
\hskip.5in
\begin{tikzpicture}[scale = 1.5]
  \draw[<->] (-1.5,0) -- (1.5,0);
  \draw[<->] (0,-1.5) -- (0,1.5);
  
  
  \draw[thick, ->]
  (0,0) -- (10:1);
   \draw[thick, ->]
  (0,0) -- (30:1);
  \draw[thick, ->]
  (0,0) -- (170:1);
  \draw[thick, ->]
  (0,0) -- (150:1);

  \draw[thick, ->]
  (0,0) -- (45:1);
   \draw[thick, ->]
  (0,0) -- (70:1);
  \draw[thick, ->]
  (0,0) -- (135:1);
  \draw[thick, ->]
  (0,0) -- (110:1);
  \fill[gray!30, opacity=0.4] (0,0) -- (10:1) -- (30:1) -- cycle;
  
  \fill[violet!30, opacity=0.6] (0,0) -- (30:1) -- (45:1)--cycle;

  \fill[violet!30, opacity=0.6] (0,0) -- (150:1) -- (135:1)--cycle;
  
  \fill[yellow!30, opacity=0.4] (0,0) -- (150:1) -- (170:1)--cycle;

  \fill[red!30, opacity=0.4] (0,0) -- (135:1) -- (110:1) -- cycle;
  
  \fill[green!30, opacity=0.4] (0,0) -- (110:1) -- (70:1)--cycle;
  
  \fill[blue!30, opacity=0.4] (0,0) -- (70:1) -- (45:1)--cycle;
\end{tikzpicture}\]
  \caption{The left depicts three unbounded polygons with all upward facing facets normals, and the right depicts their normal fans. The normal fan of the first polygon fits inside the part of the normal cone of the second containing the vector $\e_{2}$ as in the assumptions for Lemma \ref{lem:refinement}. By that lemma, there is then another polygon obtained shrinking and translating the first polygon and intersecting it with the second such that the normal fan of the resulting polygon refines both of their normal fans. The third polygon and normal fan depict such an intersection and refinement. }
  \label{fig:refinement}
\end{figure}

\begin{lem}
\label{lem:refinement}
Let $P\subseteq \mathbb{R}^{d}$ be a full-dimensional polyhedron with $m$ facets and a vertex $\v$ with normal cone $N_\v$. Let $Q\subseteq \mathbb{R}^{d}$ be a full-dimensional polyhedron with at least one vertex and exactly $q$ facets, satisfying that all of its facet outer normals are contained within the interior of $N_\v$. Then there exists a polyhedron $R$ with exactly $m + q$ facets such that $\mathcal{N}(R) \supseteq (\mathcal{N}(P) \cup \mathcal{N}(Q)) \setminus N_{\v}$. 
\end{lem}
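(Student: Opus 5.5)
The plan is to build $R$ by shrinking $Q$, translating it so that it sits at the vertex $\v$, and intersecting: $R := P \cap (\v + \varepsilon(Q - \q))$, where $\q$ is a vertex of $Q$ and $\varepsilon>0$ is small. The first step is to fix the translation so that the copy of $Q$ cuts off only a small neighbourhood of $\v$. Since every facet normal $\n_j$ of $Q$ lies in $\mathrm{int}(N_\v)$, each $\n_j$ is uniquely maximized over $P$ at $\v$. We translate the scaled copy $Q_\varepsilon$ (normals unchanged, $h_{Q_\varepsilon} = \varepsilon h_Q$) so that every one of its defining hyperplanes $\n_j^\T \x = \beta_j$ passes slightly \emph{below} $\v$ in direction $\n_j$. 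Concretely, we choose $\beta_j = \n_j^\T \v - \delta_j$ with $\delta_j>0$ tiny, but keep the relative offsets equal to those of a scaled copy of $Q$. This is done by first translating $Q$ so that $\v$ lies in its interior, then scaling by $\varepsilon$ about $\v$. The intersection $R$ then has at most $m+q$ inequalities, and we will check that all of them are facet-defining.

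The second step checks what is unaffected far from $\v$. Every vertex $\u\neq\v$ of $P$ has $\n_j^\T\u<\n_j^\T\v$, since $\v$ is the unique maximizer. So for $\varepsilon$ small, $\u$ satisfies all new inequalities strictly, and a neighbourhood of $\u$ in $R$ agrees with $P$. Hence $N_\u(R)=N_\u(P)$ for all $\u\neq \v$. Likewise every facet of $P$ other than those through $\v$ survives, and facets through $\v$ survive because their relative interiors are not cut.

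The third step handles the region near $\v$. Locally $P$ coincides with the tangent cone $\v + T_\v$, where $T_\v$ is the polar-type cone $\{\y : \n^\T\y\le 0 \ \forall \n\in N_\v\}$, and $R$ near $\v$ looks like $(\v+T_\v)\cap(\v+\varepsilon(Q-\q'))$ for an interior point $\q'$ of $Q$. We now claim that each vertex $\p$ of $Q$ (placed so that its scaled copy lies inside $\mathrm{int}(\v+T_\v)$) yields a vertex of $R$ with normal cone exactly $N_\p(Q)$. To see this, note that $N_\p(Q)\subseteq \mathrm{int} N_\v\cup\{0\}$, because its generators lie in $\mathrm{int} N_\v$. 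Any objective in $N_\p(Q)$ is therefore maximized on the tangent cone only at its apex, which allows the $Q$-copy to be "on top." One must choose the translation so that the scaled vertices of $Q$ lie inside $P$. The simplest choice is to translate so that $\q'$ lies inside the tangent cone, i.e.\ shift slightly into $P$; for small $\varepsilon$ the whole bounded part near $\v$ of the copy then lies in $\mathrm{int}P$ or is cut only by facets through $\v$. The remaining new vertices of $R$ arise where facets of $Q$ meet facets of $P$ through $\v$. Their normal cones lie inside $N_\v$, so we can ignore them; the statement only asks for $\supseteq$ outside $N_\v$.

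The main obstacle is the unboundedness of $Q$. Vertices of $Q$ are placed into $P$, but the recession directions of $Q$ must exit through the facets of $P$ at $\v$. One also has to verify that $Q$'s vertices genuinely remain vertices of $R$ with unchanged normal cones, rather than being cut off by $P$. I would handle this with a support-function argument. For $\u\in \mathrm{int}N_\p(Q)$, we have $h_R(\u)\le\min(h_P(\u),h_{Q_\varepsilon}(\u))$. The translated $Q_\varepsilon$ attains $h_{Q_\varepsilon}(\u)$ at $\p_\varepsilon$, which lies in $P$ provided we translate so that the finitely many vertices of $Q_\varepsilon$ lie in $\mathrm{int}P$, which is possible since they shrink to a point near $\v$ inside $P$. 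Hence $\p_\varepsilon$ is the unique maximizer over $R$, giving $N_{\p_\varepsilon}(R)\supseteq N_\p(Q)$, and equality follows by a dimension/fan argument. The same argument applied to $P$'s vertices $\u\ne\v$, using the fact that their cones avoid $\mathrm{int} N_\v$ and hence the $Q$-normals, gives the first half. Finally, the facet count is exactly $m+q$ because each inequality is tight on a full-dimensional piece.
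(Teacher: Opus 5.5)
Your end state (shrink $Q$ so that all of its vertices lie in $\mathrm{int}\,P$ near $\mathbf{v}$, set $R=P\cap Q_{\varepsilon}$, and check that vertices $\mathbf{u}\neq\mathbf{v}$ of $P$ strictly satisfy the new inequalities while vertices of the copy strictly satisfy those of $P$) is exactly the paper's proof. However, the concrete placement you give in step one, and reuse as the local model in step three, does not work and has to be discarded. Suppose you translate $Q$ so that $\mathbf{v}$ is in its interior and then scale about $\mathbf{v}$, or equivalently use $\mathbf{v}+\varepsilon(Q-\mathbf{q}')$ with $\mathbf{q}'\in\mathrm{int}\,Q$. Then $\mathbf{n}_j^{\intercal}\mathbf{v}<\beta_j$ for every $j$. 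Since each $\mathbf{n}_j$ is maximized over $P$ at $\mathbf{v}$, every $\mathbf{x}\in P$ then satisfies $\mathbf{n}_j^{\intercal}\mathbf{x}<\beta_j$. So $P$ lies in the interior of the copy, $R=P$, all $q$ new inequalities are redundant, and no cone of $Q$ appears. Moreover, no vertex of the copy can lie in $\mathbf{v}+T_{\mathbf{v}}$, because each lies on some hyperplane $\mathbf{n}_j^{\intercal}\mathbf{x}=\beta_j>\mathbf{n}_j^{\intercal}\mathbf{v}$. So the placement you ask for in step three is impossible under that model, and the recipe contradicts your own requirement $\beta_j=\mathbf{n}_j^{\intercal}\mathbf{v}-\delta_j$.

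What works is the paper's choice of parameters, made in this order:
\begin{enumerate}
\item Fix $\delta>0$ with $\mathbf{n}_j^{\intercal}\mathbf{x}>\mathbf{n}_j^{\intercal}\mathbf{u}$ for all $j$, all vertices $\mathbf{u}\neq\mathbf{v}$ of $P$, and all $\|\mathbf{x}-\mathbf{v}\|<\delta$.
\item Pick $\mathbf{u}^*\in\mathrm{int}\,P$ within $\delta/2$ of $\mathbf{v}$, and a ball of radius $\gamma$ about $\mathbf{u}^*$ inside $P$.
\item Only then shrink $Q$ about one of its vertices and move that vertex to $\mathbf{u}^*$, so that every vertex of the copy is within $\varepsilon<\min(\gamma,\delta/2)$ of $\mathbf{u}^*$.
\end{enumerate}
Since $Q$ has a vertex, it is pointed. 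So each facet of the copy contains a vertex $\mathbf{x}$ of the copy, and $\beta_j=\mathbf{n}_j^{\intercal}\mathbf{x}$. This gives at once $\beta_j<\mathbf{n}_j^{\intercal}\mathbf{v}$ and $\beta_j>\mathbf{n}_j^{\intercal}\mathbf{u}$ for all vertices $\mathbf{u}\neq\mathbf{v}$. The latter is your step two, which is valid only once the shift is fixed before $\varepsilon$ is taken small.

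Finally, your reason for the exact facet count is inaccurate. Facets of $P$ through $\mathbf{v}$ are cut near $\mathbf{v}$, so their relative interiors are not untouched. They survive for a different reason: either they contain another vertex $\mathbf{u}\neq\mathbf{v}$, near which $R=P$, or, if $\mathbf{v}$ is their only vertex, they survive far out along a recession direction $\mathbf{y}\neq\mathbf{0}$, where $\mathbf{n}_j^{\intercal}\mathbf{y}<0$ because $\mathbf{n}_j\in\mathrm{int}\,N_{\mathbf{v}}$. Facets of the copy survive because each contains a vertex lying in $\mathrm{int}\,P$.
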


\begin{proof}
  We construct the new polyhedron $R$ by cutting the vertex $\mathbf{v}$ off from $P$ and replacing it with a $Q$-cap that is a rescaled and translated copy of $Q$. 
  
First consider the geometric setting for some intuition. By polar duality, the support of $\mathcal{N}(Q)$ being inside the interior of the normal cone $(N_\mathbf{v})$ implies that the tangent cone of $P$ at $\mathbf{v}$ is strictly contained inside the interior of the recession cone of $Q$, which is the cone of all unbounded directions inside $Q$. Geometrically, this means that $Q$ opens strictly wider than $P$ locally at $\mathbf{v}$. This is what allows us to truncate $\mathbf{v}$ using $Q$ without interrupting the rest of $P$: we will translate $Q$ past $\v$ in order to exclude $\v$ from its intersection with $P$, and rescale it to be sufficiently small to prevent its facets from being chopped off by the facets of $P$. The specific details of our construction are as follows.
  
  Let $Q = \{\x \in\mathbb{R}^{d}: \; A\x\leq \mathbf{b} \}$ be the inequality description of $Q$ without redundancy. By our assumption, each row vector (outer normal) $\mathbf{A}_i$ lies in the interior of $N_\v$. By Proposition \ref{prop: maximizer vs normal cone}, linear function $\mathbf{A}_i^\T\x$ is uniquely maximized over $P$ at the vertex $\v$. Therefore there exists a small radius $\delta>0$ such that 
  \[
  \mathbf{A}_i^\T \x > \mathbf{A}_i^\T \v'
  \]
  for all $\x$ satisfying $||\x-\v||<\delta$ and any other vertex $\v'$ of $P$.

 Pick a point $\u^*$ strictly in the interior of $P$ at distance at most $\delta/2$ from $\v$. Since $\u^{\ast}$ is on the interior of $P$, there exists an open ball of radius $\gamma$ around $\u^{\ast}$ also contained in the interior of $P$. Let $\u$ be a vertex of $Q$. Rescale $Q$ using a tiny factor such that all remaining vertices are clustered within the distance of $\varepsilon >0$ of $\u$, where $0<\varepsilon< \min(\delta/2, \gamma)$. Call the new polyhedron we obtained $Q_{\text{mini}}$. Next, we translate $Q_{\text{mini}}$ so that $\u$ is mapped to $\u^*$. Call the translated copy $Q_{\text{cap}}$, i.e.,
  \[Q_{\text{cap}} = Q_{\text{mini}} + (\u^*-\u).\]
  Our final step is to put this cap on $P$ by defining
  \[R : = P \cap Q_{\text{cap}}.\]
  
  Let's verify the construction of $R$ satisfies the required conditions. Since all of the vertices of $Q_{\text{cap}}$ are inside an $\varepsilon$-ball centered at $\u^*$ and $\varepsilon < \gamma$, where $\gamma$ is the radius of a ball contained around $\u^{\ast}$ in the strict interior of $P$, each vertex of $Q_{\text{cap}}$ is in $R$ and therefore feasible.  Any inequality that was tight at them in $Q_{\text{cap}}$ before remains tight, and so they remain basic feasible and therefore vertices.

  Each vertex of $Q_{\text{cap}}$ is contained in an $\varepsilon$ ball around $\mathbf{u}^{\ast}$. Hence, it is of distance at most $\varepsilon <\delta/2$ from $\mathbf{u}^{\ast}$. By construction $\mathbf{u}^{\ast}$ is at most $\delta/2$ away from $\mathbf{v}$, so each vertex of $Q_{\text{cap}}$ is of distance at most $\delta$ from $\mathbf{v}$. Hence, by our choice of $\delta$, for any outer normal $\mathbf{A}_{i}$ of $Q_{\text{cap}}$, $A_{i}^{\intercal}\mathbf{x}$ is larger at any vertex of $Q_{\text{cap}}$ than any vertex of $P$ other than $\mathbf{v}^{\ast}$. Thus, any vertex of $P$ other than $\mathbf{v}^{\ast}$ remains feasible and therefore a basic feasible solution. Hence, each vertex of $P$ remains a vertex of $R$ as well. The set of tight inequalities at each vertex does not change, and so their normal cones are also preserved meaning that the normal fan of $R$ refines that of $P$ and $Q$. Thus, every normal cone of a vertex of $P$ and a vertex of $Q$ other than $\mathbf{v}$ is a normal cone of a vertex of $R$. Thus, $\mathcal{N}(R) \supseteq (\mathcal{N}(P) \cup \mathcal{N}(Q)) \setminus N_{\v}$.

  The intersection of two polyhedra also does not introduce unexpected new facets. Therefore the number of facets of $R$ is the sum of the numbers of facets of $P$ and $Q$.
\end{proof}

Note that, while we know the normal fan of $R$ is a refinement of the two fans, we do not have a complete description of its fan from the construction. This construction is coarse in that sense meaning that we do not actually see globally what the resulting polytope looks like. It could be that this already yields a counterexample to the polynomial Hirsch conjecture. However, this coarse information suffices for our arguments. 

\subsection{Building the Example}

Finally, to implement our construction, we start with the seed polyhedron. We blow it up, so that it has a cone of directions yielding exponential sized projections so large that $d+1$ rotated copies of it cover the entire space. Then we take $d+1$ copies of these blown up seeds and progressively shrink and glue them together to finally build our bad example.

\begin{theorem}\label{thm: unbounded main}
For each $d \geq 3$, there exists a $d+1$ dimensional polyhedron $P$ with $(2d+1)(d+1)$ facets such that for any vector $\c \in \mathbb{R}^{d}\backslash \mathbf{0}$, the ray $\r(\c) = \{(\lambda\c,1):\lambda\geq 0 \}$ crosses through the interior of at least $2^{d-1}$ normal cones of $P$.

\end{theorem}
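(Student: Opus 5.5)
Our plan is to build $P$ by gluing $d+1$ modified copies of the seed with Lemma~\ref{lem:refinement}, so that each copy is responsible for one cone of directions and the cones together cover $\mathbb{R}^d\setminus\{\mathbf{0}\}$.

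\textbf{Step 1: choose the covering cones.} Take $d+1$ simplicial cones $C_0,\dots,C_d\subseteq\mathbb{R}^d$ whose union is $\mathbb{R}^d$. For instance, let $\mathbf{a}_0,\dots,\mathbf{a}_d$ be vertices of a simplex centred at the origin, and let $C_j$ be the cone spanned by all $\mathbf{a}_i$ with $i\neq j$.

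\textbf{Step 2: one seed per cone.} For each $j$, start from the seed $\tilde{Q}$ of Lemma~\ref{lem:theseed}, which has $2d+1$ facets, all outer normals upward, and a unique top vertex. Apply Lemma~\ref{lem:broadening} and then the stretching proposition. This gives a polyhedron $Q_j$ with $2d+1$ upward facets and a unique $\e_{d+1}$-maximizer $\v_j^*$. Moreover, for every $\x\in C_j\setminus\{\mathbf{0}\}$, the ray $\r(\x)$ meets the interiors of at least $2^{d-1}$ upward normal cones of $Q_j$, and these cones are different from $N_{\v_j^*}$.

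I need to check the last claim. In the seed, the cones hit are those of the lifted upper-hull vertices $\y_\v$. The top vertex corresponds to $\lambda_\v=0$, and it is only one of them. So I will keep $2^{d-1}$ cones by either discarding that one or noting that the count excluding it is still large enough; the plan is to verify this bookkeeping carefully.

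\textbf{Step 3: nest the fans.} Glue inductively. Set $R_0=Q_0$. Given $R_{j-1}$, it contains a vertex whose normal cone contains $\e_{d+1}$ in its interior: the unique top vertex, carried along since the new caps are placed near the top. Apply Lemma~\ref{lem: new shrinking} to $Q_j$ with $\varepsilon$ small enough that every facet normal of the shrunk copy $Q_j'$ lies in the interior of that normal cone. Condition (3) of Lemma~\ref{lem: new shrinking} keeps the ray-crossing property of Step 2.

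Then apply Lemma~\ref{lem:refinement}, with $P=R_{j-1}$, $Q=Q_j'$ and $\v$ the top vertex. This produces $R_j$ with $(2d+1)(j+1)$ facets. Its fan contains every normal cone of $R_{j-1}$ except the top one, together with every normal cone of $Q_j'$.

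After $d+1$ steps we get $P=R_d$ with $(2d+1)(d+1)$ facets. Lemma~\ref{lem:refinement} applies verbatim to unbounded polyhedra, since it only uses vertices and the local geometry near $\v$, so unboundedness causes no problem.

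\textbf{Step 4: the count, and the main obstacle.} Given $\c\neq\mathbf{0}$, pick $j$ with $\c\in C_j$. The ray $\r(\c)$ meets the interiors of $2^{d-1}$ normal cones of $Q_j'$ other than its top cone. Those cones of $Q_j'$ become cones of $R_j$. The later steps only remove the current top cone, which at each later stage is a cone of $Q_k'$ with $k>j$, so they survive into $P$.

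The delicate point is what "the top vertex of $R_j$" is. It should be the top vertex of the cap $Q_j'$, whose normal cone is the only one containing $\e_{d+1}$. Since the new fan refines near $\e_{d+1}$ with $Q_j'$'s fan, the vertex of $R_j$ whose normal cone contains $\e_{d+1}$ is the $Q_j'$ top vertex. Only this cone is removed at the next stage, and it is not among the $2^{d-1}$ counted cones.

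Ensuring this precisely is the step I expect to be hardest. It uses the uniqueness of the $\e_{d+1}$-maximizer from Lemma~\ref{lem: new shrinking}(2), and the fact that the cones counted in Step 2 avoid the top cone. I will verify this by tracking $\e_{d+1}\in\operatorname{int}N$ through each gluing.
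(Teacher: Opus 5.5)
Your construction is the paper's proof essentially step for step: covering cones from a simplex fan, one broadened and stretched seed per cone, nested gluing via Lemma~\ref{lem: new shrinking} and Lemma~\ref{lem:refinement}, and counting through cones that are never cut off. The step you single out as hardest is actually immediate. Lemma~\ref{lem:refinement} keeps \emph{every} normal cone of $Q_j'$, including its top cone, and that cone contains $\e_{d+1}$ in its interior. So by Proposition~\ref{prop: maximizer vs normal cone} the top vertex of the cap is the unique $\e_{d+1}$-maximizer of $R_j$. In particular the top cones are nested: the top cone of $R_j$ equals that of $Q_j'$ and lies inside the top cone of $R_{j-1}$.

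The point that does need repair is the claim in Step~2 that the $2^{d-1}$ cones met by $\r(\x)$ are different from $N_{\v_j^*}$. This is not justified by the lemmas you invoke. The ray $\r(\x)$ starts at $\e_{d+1}\in\operatorname{int}N_{\v_j^*}$, so the top cone is always among the cones it meets. The bound in Lemma~\ref{lem:theseed} counts this cone, since the top vertex is the lift of the $\w$-maximal vertex, which lies on the upper hull. The cone removed at stage $j+1$ is exactly this top cone of $Q_j'$. So only $2^{d-1}-1$ of the counted cones are guaranteed to survive, and discarding it is not enough on its own. The fix is to recover one cone at the end using the nesting. The top cone of $P=R_d$ contains $\e_{d+1}$ in its interior, so $\r(\c)$ meets it at $\lambda=0$. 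It also lies inside the top cone of $Q_j'$ (or equals it when $j=d$), so it is distinct from the surviving non-top cones of $Q_j'$. This gives $(2^{d-1}-1)+1=2^{d-1}$ cones. The paper's final count passes over this same point without comment. With this line added, your argument is complete.
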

\begin{proof}
By Lemma \ref{lem:theseed}, there exists a $d+1$ dimensional unbounded polyhedron $Q$ with $2d+1$ facets and all upward pointing outer normals, such that $\e_{d+1}$ is uniquely maximized at one vertex $\v^*$, and there exists $\mathbf{c}^* \in \mathbb{R}^{d}$ such that the ray $r (\mathbf{c}^*) = \{(\lambda\c^*, 1): \lambda \geq 0\}$ intersects $2^{d-1}$ upward facing normal cones of vertices of $Q$.

Let $C_{1}, C_{2}, \dots, C_{d+1}$ be the maximal cones of the normal fan of any $d$-dimensional simplex in $\mathbb{R}^{d}$ embedded in $\mathbb{R}^{d+1}$ via appending a $0$ in their last coordinate. By Lemma \ref{lem:broadening}, we may apply linear transformations to arrive at polyhedra $Q_{1}, Q_{2}, \dots, Q_{d+1}$ such that for each $i$, the set of rays $\r(\mathbf{c})$ such that $\mathbf{c} \in C_{i}$ all intersect $2^{d-1}$ many upward facing maximal normal cones of $Q_{i}$. Furthermore, by Lemma \ref{lem:broadening}, this transformation preserves that all normal vectors of facets point upward.

Set $R_{1} = Q_{1}$. Then $R_{1}$ has a unique maximizer of $\e_{d+1}$. We build $R_{i}$ recursively. Assume by induction that $R_{i-1}$ has a unique $\e_{d+1}$-maximizer vertex $\mathbf{v}$. By Proposition \ref{prop: maximizer vs normal cone}, $\e_{d+1}$ is in the interior of the normal cone $N_\v$. Then there exists $\varepsilon > 0$ sufficiently small so that any vector of distance at most $\varepsilon$ from $\e_{d+1}$ is also in $\text{int}(N_\v)$. Since each inequality defining $Q_{i}$ has positive last coordinate, by Lemma \ref{lem: new shrinking}, there is a map yielding $Q_{i}'$, a flattened copy of $Q_{i}$ such that all of its outer normals are within $\varepsilon$ of $\e_{d+1}$, and there is still a unique $\e_{d+1}$-maximal vertex.

Next, we construct $R_{i}$. Since the support of the normal fan of $Q_{i}'$ is contained within an $\varepsilon$-ball centered at $\e_{d+1}$, it is inside the normal cone $N_{\v}$ of polyhedron $R_{i-1}$ by our choice of $\varepsilon$. Thus, they satisfy the assumptions of Lemma \ref{lem:refinement} and therefore can create a polyhedron $R_{i}$ such that the normal fan of $R_{i}$ refines both the normal fans of $Q_{i}'$ and $R_{i-1}$. Applying this process $d$ times yields our final $R_{d+1}$. Since each $Q_{i}$ has $2d+1$ facets, the total number of facets of the resulting polyhedron is $(2d+1)(d+1)$ by the additivity of the number of facets in the refinement from Lemma \ref{lem:refinement}.
\begin{figure}
  \centering
  \[
\begin{tikzpicture}
    \draw[black, thick] (-1,-1) -- (0,1) -- (1,-1) -- cycle;
    \draw[black, thick] (0,1) -- (1,-1) -- (2,1) -- cycle;
    \draw[black, thick] (1,-1) -- (2,1) -- (3,-1) -- cycle;

    \draw[red, thick, ->, dashed] (0,0) -- (3,0);
    
    \draw (0, 0) node[red, circle, fill, inner sep = 1.5pt] {};

    \begin{scope}[shift={(5,0)}]
    \draw[black, thick] (-1,-1) -- (0,1) -- (1,-1) -- cycle;
    \draw[black, thick] (0,1) -- (1,-1) -- (2,1) -- cycle;
    \draw[black, thick] (1,-1) -- (2,1) -- (3,-1) -- cycle;

    \draw[red, thick, ->, dashed] (0,0) -- (2.2,1.1);

     \draw[red, thick, ->, dashed] (0,0) -- (3.3,-1.1);

     \fill[red!30, opacity=.3] (0,0) -- (2.2,1.1) -- (3.3,-1.1) -- cycle;
    
    \draw (0, 0) node[red, circle, fill, inner sep = 1.5pt] {};
    \end{scope}




    
  \end{tikzpicture}
  \]
\[\begin{tikzpicture}
  \begin{scope}[cm={2/5,0,-4/5,10/5,(10,0)}]
    \draw[black, thick] (-1,-1) -- (0,1) -- (1,-1) -- cycle;
    \draw[black, thick] (0,1) -- (1,-1) -- (2,1) -- cycle;
    \draw[black, thick] (1,-1) -- (2,1) -- (3,-1) -- cycle;

    \draw[red, thick, ->, dashed] (0,0) -- (2.2,1.1);

     \draw[red, thick, ->, dashed] (0,0) -- (3.3,-1.1);

     \fill[red!30, opacity=.3] (0,0) -- (2.2,1.1) -- (3.3,-1.1) -- cycle;
    
    \draw (0, 0) node[red, circle, fill, inner sep = 1.5pt] {};
    \end{scope}
    \begin{scope}[cm={-2/5,0,4/5,10/5,(5,0)}]
    \draw[black, thick] (-1,-1) -- (0,1) -- (1,-1) -- cycle;
    \draw[black, thick] (0,1) -- (1,-1) -- (2,1) -- cycle;
    \draw[black, thick] (1,-1) -- (2,1) -- (3,-1) -- cycle;

    \draw[red, thick, ->, dashed] (0,0) -- (2.2,1.1);

     \draw[red, thick, ->, dashed] (0,0) -- (3.3,-1.1);

     \fill[red!30, opacity=.3] (0,0) -- (2.2,1.1) -- (3.3,-1.1) -- cycle;
    
    \draw (0, 0) node[red, circle, fill, inner sep = 1.5pt] {};
    \end{scope}

    \begin{scope}[cm={0,-4/5,10/5,-2/5,(0,0)}]
    \draw[black, thick] (-1,-1) -- (0,1) -- (1,-1) -- cycle;
    \draw[black, thick] (0,1) -- (1,-1) -- (2,1) -- cycle;
    \draw[black, thick] (1,-1) -- (2,1) -- (3,-1) -- cycle;

    \draw[red, thick, ->, dashed] (0,0) -- (2.2,1.1);

     \draw[red, thick, ->, dashed] (0,0) -- (3.3,-1.1);

     \fill[red!30, opacity=.3] (0,0) -- (2.2,1.1) -- (3.3,-1.1) -- cycle;
    
    \draw (0, 0) node[red, circle, fill, inner sep = 1.5pt] {};
    \end{scope}
\end{tikzpicture}\]

\begin{tikzpicture}[scale =.9]
   \draw (0, 0) node[red, circle, fill, inner sep = .75pt] {};



    

  \begin{scope}[cm={-1/5,0,2/5,5/5,(0,0)}]
    \draw[black, thick] (-1,-1) -- (0,1) -- (1,-1) -- cycle;
    \draw[black, thick] (0,1) -- (1,-1) -- (2,1) -- cycle;
    \draw[black, thick] (1,-1) -- (2,1) -- (3,-1) -- cycle;



    
    \end{scope}
    
    \begin{scope}[cm={0,-12/5,30/5,-6/5,(0,0)}]
    \draw[black, thick] (-1,-1) -- (0,1) -- (1,-1) -- cycle;
    \draw[black, thick] (0,1) -- (1,-1) -- (2,1) -- cycle;
    \draw[black, thick] (1,-1) -- (2,1) -- (3,-1) -- cycle;
    \end{scope}
    \draw[red, thick, dashed, ->] (0,0) -- (-7,-7);
    \draw[red, thick, dashed, ->] (0,0) -- (7,-7);
    \draw[red, thick, dashed, ->] (0,0) -- (0,4);
     \fill[red!30, opacity=.3] (0,0) -- (7,-7) -- (-7, -7) -- (-7,4) -- (0,4) -- cycle;

\end{tikzpicture}





    


  \caption{The images depict three cones of the normal fan of five $3$-D polyhedra with upper facing normals intersected with the hyper-plane $(0,0,1)\cdot\mathbf{x} = 1$. In the top, there is a ray from $\e_{3}$ intersecting the interiors of all of the cones. As shown in the proof of Lemma \ref{lem:broadening}, there must therefore be a full dimensional simplicial cone with vertex $\e_{3}$ such that every ray of that cone intersects each of those cones on their interior. Such a cone is depicted in the top right image. Then, by Lemma \ref{lem:broadening}, one may apply a linear transformation to the normal fan such that this cone is whatever cone we choose. In the proof of Theorem \ref{thm: unbounded main}, we apply a transformation to find polyhedra $Q_{1}, Q_{2},$ and $Q_{3}$ such that the set of rays intersecting the three cones covers all of space. Finally, the bottom figure shows the shrinking of $Q_{2}$ to be inside $Q_{1}$. The set of rays intersecting at least three cones then contains the union of the set from $Q_{1}$ and $Q_{2}$. }
  \label{fig:ProofIdea}
\end{figure}

By construction, the normal fan of $R_{d+1}$ contains each cone in the normal fans of $Q_{1}, Q_{2}', Q_{3}', \dots, Q_{d+1}'$ excluding their $\e_{d+1}$-maximal vertices, where each $Q_{i}'$ is obtained by rescaling from Lemma \ref{lem: new shrinking}. Let $\c \in \mathbb{R}^{d}\setminus \mathbf{0}$. Since $C_1, \dots, C_{d+1}$ are the maximal cones of the normal fan of a simplex, they cover $\R^d$. Therefore $\c \in C_{i}$ for some $i \in [d+1]$. Then $\r (\c)$ intersects $2^{d-1}$ many cones of $Q_{i}$, and by Lemma \ref{lem: new shrinking}, it also intersects $2^{d-1}$ many normal cones of $Q_{i}'$. Since the normal fan of $R_{d+1}$ refines $Q_{i}'$ excluding the normal cone of its $\e_{d+1}$-maximal vertex, $\r(\c)$ must intersect the interior of at least $2^{d-1}$ normal cones of $R_{d+1}$. Hence, for all $\c\in \R^d\backslash \mathbf{0}$, the ray $\r(\c)$ intersects the interior of at least $2^{d-1}$ normal cones, as desired.

\end{proof}

Notice that the polyhedron $P$ constructed here must be unbounded, since the condition that all outer normal vectors are upward pointing is preserved throughout the construction. We can make it bounded by adding a new facet down below.

\begin{cor}\label{cor: bounding from below}
  For each $d \geq 3$, there exists a $d+1$ dimensional polytope $P'$ with $(2d+1)(d+1) + 1$ facets such that for any vector $\c \in \mathbb{R}^{d}\backslash \mathbf{0}$, the ray $\r(\c) = \{(\lambda\c,1):\lambda\geq 0 \}$ crosses through the interior of at least $2^{d-1}$ of its normal cones.
\end{cor}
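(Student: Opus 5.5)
We take the unbounded polyhedron $P = R_{d+1}$ from Theorem \ref{thm: unbounded main} and set $P' := P \cap \{\y : y_{d+1} \geq -K\}$ for a sufficiently large $K>0$. This adds exactly one facet, with outer normal $-\e_{d+1}$. The steps are: (i) show $P'$ is bounded; (ii) show that for $K$ large every vertex of $P$ is a vertex of $P'$ with the same normal cone, and that only the new vertices on the bottom facet have downward-containing normal cones; (iii) conclude that each ray $\r(\c)$ still crosses the interiors of the same $\geq 2^{d-1}$ normal cones.

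\emph{Boundedness.} The recession cone of $P$ is $\{\mathbf{d} : \n_i^\T \mathbf{d} \leq 0 \text{ for all facet normals } \n_i\}$. Every $\n_i$ has positive last coordinate, and the normals positively span a neighborhood of $\e_{d+1}$ (they include a full-dimensional normal cone containing $\e_{d+1}$ in its interior). Hence any nonzero recession direction $\mathbf{d}$ satisfies $\e_{d+1}^\T\mathbf{d}<0$. Adding the constraint $y_{d+1}\ge -K$ therefore kills every recession direction, so $P'$ is a polytope. It is nonempty and full-dimensional once $K$ exceeds $-\min$ of the last coordinate over the vertices of $P$.

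\emph{Preservation of the old fan.} Choose $K$ larger than $|y_{d+1}|$ at every vertex of $P$. Then every vertex of $P$ satisfies the new inequality strictly, so it remains a vertex of $P'$ with the same tight set. Its normal cone in $P'$ is therefore the same cone $N_\v$ as in $P$. This is the step I expect to need the most care: one must check that $P'$ creates no new vertex whose normal cone meets the open upper half-space $\{y_{d+1}>0\}$ and thus cuts into the old cones. Every new vertex lies on the bottom facet, so $-\e_{d+1}$ lies in its normal cone. The normal cones of all old vertices are unchanged, and their union is the support of $\mathcal{N}(P)$, which in $P$ is the cone generated by all normals, containing the upward region swept by the rays. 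New vertices can only occupy the complement of the old support. Because all full-dimensional cones of $\mathcal{N}(P')$ have disjoint interiors and the old cones survive intact, no new cone can meet the interior of an old one.

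\emph{Conclusion.} Fix $\c\ne\mathbf{0}$. By Theorem \ref{thm: unbounded main}, $\r(\c)$ meets the interiors of at least $2^{d-1}$ normal cones of $P$. By the previous step these are normal cones of vertices of $P'$, so the same count holds for $P'$. The facet count is $(2d+1)(d+1)+1$, where the new inequality is irredundant because $P$ is unbounded below. This gives the corollary.
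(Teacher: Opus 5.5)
Your proposal is correct and follows the paper's proof: you make the same horizontal cut $y_{d+1}\ge -K$ below all vertices of the polyhedron from Theorem~\ref{thm: unbounded main}, and you get boundedness from the fact that $\e_{d+1}$ lies in the interior of a full-dimensional normal cone, so every recession direction points strictly downward. You also make explicit, correctly, that every old vertex keeps its tight set and hence its normal cone, which the paper leaves implicit; that alone already gives the $2^{d-1}$ count, so the discussion of new vertices is unnecessary (and new cones do meet $\{y_{d+1}>0\}$, just not the interiors of old cones).
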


\begin{proof}
  Let $P$ be the polyhedron constructed in Theorem \ref{thm: unbounded main}. Let $H = \{\y\in \R^{d+1}: y_{d+1} = -N\}$ be a horizontal hyperplane whose last coordinate, $-N$, is strictly smaller than all the vertices of $P$. Let $P' = P\cap H^+$, where $H^+$ is the half space given by $y_{d+1} \geq - N $. By construction, $P'$ has exactly one more facet than $P$, which is the bottom one supported by $H$.
  
  We now show that $P'$ is bounded by showing the ray $\lambda \u$ escapes $P'$ for any direction $\u\in \R^{d+1}\backslash \mathbf{0}$. If $u_{d+1}<0$, than $\lambda \u$ must eventually crosses $H$ and escape $P'$. If $u_{d+1} \geq 0$, for $\lambda\u$ to stay inside $P'$ as $\lambda$ goes to infinity, it must satisfy that $\u ^\T\n <0$ for all facet outer normals $\n$ of $P'$. Since there exists a full-dimensional normal cone that contains $\e_{d+1}$ in its interior, there must exists an upward pointing outer normal vector $\n$ such that $\u ^\T\n >0$, hence $\lambda \u$ must eventually violate that facet inequality, and escape $P'$. We have shown that $P'$ does not contain a ray that goes to infinity, therefore it is bounded.
\end{proof}

Translating this result into the language of coherent monotone paths yields the proof of our main result.

\begin{proof}[Proof of Theorem \ref{thm:main}]
Take the polytope $P'$ and objective $\e_{d+1}$ from Corollary \ref{cor: bounding from below}. Let $\mathbf{c} \in \mathbb{R}^{d+1}$ be linearly independent of $\e_{d+1}$. Then, by Proposition \ref{prop: orthogonality}, the line $\{\mathbf{c} + \lambda \e_{d+1}: \lambda \in \mathbb{R}\} = \{\mathbf{c}' + \lambda \e_{d+1}: \lambda \in \mathbb{R}\}$ for some $\mathbf{c}'$ orthogonal to $\e_{d+1}$. In particular, the last coordinate of $\mathbf{c}'$ is equal to $0$. Thus, $\{\mathbf{c}' + \lambda \e_{d+1}: \lambda \in \mathbb{R}\} = \{(\mathbf{w},\lambda): \lambda \in \mathbb{R}\}$ for $\mathbf{w}$ the restriction of $\mathbf{c}'$ to its first $d$ coordinates. Note that 
\[\{(\mathbf{w},\lambda): \lambda \in \mathbb{R}\} \supseteq \{(\mathbf{w},\lambda): \lambda > 0\}.\]
Up to rescaling each point by a factor $\gamma = 1/\lambda$, this line is $\{(\gamma \mathbf{w},1): \gamma > 0\}$. In particular, since rescaling does not change the set of cones intersected, the line $\{\mathbf{c} + \lambda \e_{d+1}: \lambda \in \mathbb{R}\}$ intersects all of the normal cones of $P'$ that the ray $\{(\gamma \mathbf{w},1): \gamma > 0\}$ intersects. 

Since $P'$ satisfies the statement of Corollary \ref{cor: bounding from below}, that ray intersects at least $2^{d-1}$ normal cones of $P'$. Thus, by Proposition \ref{prop: coherentpath}, all coherent monotone paths on $P'$ with respect to $\e_{d+1}$ contain at least $2^{d-1}$ vertices and therefore have length at least $2^{d-1}-1$. 
\end{proof}

\subsection{Proof of applications}

We now prove Corollaries \ref{cor: shadowsimplex}, \ref{cor:stabbing} and \ref{cor:parametric optimization}. 

\subsubsection{Shadow Simplex Method} As noted in the introduction, the shadow simplex path from a vector $\mathbf{w}$ to a vector $\mathbf{c}$ consists of the set of unique optima of $\mathbf{w} + \lambda \mathbf{c}$ such that $\lambda \geq 0$. Sometimes it is phrased as taking all convex combinations of objectives between $\mathbf{c}$ and $\mathbf{w}$, but up to rescaling, this is equivalent. Unless $\mathbf{c}$ and $\mathbf{w}$ are sufficiently generic, there may be a need to break ties for choosing the path. Regardless of how these ties are broken, the length of the shadow simplex path is lower bounded by the number of normal cones intersected through their interior by the corresponding ray. Hence, our lower bound applies even to that case. 

\begin{proof}[Proof of Corollary \ref{cor: shadowsimplex}]
Take the polytope $P'$ and objective $\e_{d+1}$ from Corollary \ref{cor: bounding from below}. Then for any $\mathbf{c}$ orthogonal to $\e_{d+1}$, $\mathbf{c} = (\mathbf{c}',0)$. Thus, $\{(\lambda \mathbf{c}',1): \lambda \geq 0\} = \{\e_{d+1} + \lambda \mathbf{c}': \lambda \geq 0\}$. By the statement of Corollary \ref{cor: bounding from below}, this ray intersects $2^{d-1}$ many normal cones, so by Proposition \ref{prop: maximizer vs normal cone}, the set of maximizers of that ray is at least $2^{d-1}$. Hence, the corresponding shadow simplex path is of length at least $2^{d-1}-1$ as desired. 
\end{proof}

\subsubsection{Stabbing cells in regular triangulations}

For our application to stabbing subdivisions, we need to recall some definitions. Let $V= \{ \p_1, \p_2, \dots, \p_n\}$ be a finite set of points in $\R^d$ such that $P = \conv (V)$ is a $d$-dimensional polytope. A \textbf{subdivision} of $P$ (or of the point configuration $V$) is a collection of smaller $d$-dimensional polytopes $\mathcal{S} = \{P_1, \dots, P_s \}$ whose union is $P$ (each $P_i$ is called a \textbf{cell} in $\mathcal{S}$), and such that the intersection $P_i\cap P_j$ of any two cells is a single face of both or empty. If all the cells in $\mathcal{S}$ are simplices, then $\mathcal{S}$ is a \textbf{triangulation} of $P$.

A subdivision $\mathcal{S}$ is called \textbf{regular} if it can be constructed by a lifting of $P$. Specifically, given a weight function $w: \R^d \to \R$, we defined a $w$-induced lifting map $T_w: \R^d \to \R^{d+1}$ that sends $\x \mapsto (\x, w(\x))$. A \textbf{regular subdivision} of $P$ is precisely a subdivision $\mathcal{S}$ formed by taking the facets in the upper hull of the lifted polytope, and projecting them back to $\R^d$.

The following proposition gives an equivalent definition of regular subdivisions due to Gelfand, Kapranov, and Zelevinsky in \cite{GKZ}.

\begin{prop}\cite[Section 2.3]{Triangulations}\label{prop: regular subdiv}
  A polyhedral complex is a regular subdivision of a point configuration if and only if the cells are the domains of linearity of a convex and piecewise-linear function.
\end{prop}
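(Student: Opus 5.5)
We would prove the two directions separately. Throughout, we pass between the paper's convention (regular subdivisions come from the \emph{upper} hull) and convex functions by a sign change. For heights $w(\p_i)$ on $V$, define
\[
g_w(\x) := \max\{t : (\x,t) \in \conv\{(\p_i, w(\p_i)) : 1 \le i \le n\}\}, \qquad \x \in P.
\]
Then $g_w$ is a concave piecewise-linear function whose graph is exactly the upper hull of the lifted polytope. The function $f := -g_w$ is convex and piecewise-linear, and its domains of linearity are precisely the projections of the upper facets.

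For the forward direction, suppose $\mathcal{S}$ is regular, induced by $w$. Each upper facet $F$ of the lifted polytope lies in a supporting hyperplane $t = \mathbf{a}_F^\T \x + b_F$. On the projection $\pi(F)$ the function $g_w$ equals this affine function. Distinct upper facets lie in distinct hyperplanes, so the maximal regions on which $g_w$ is affine are exactly the projected facets, that is, the cells of $\mathcal{S}$. Negating gives the required convex function. Concretely, $f(\x) = \max_F (-\mathbf{a}_F^\T\x - b_F)$ restricted to $P$, which is visibly convex, with each maximum attained by a single affine piece on the interior of a cell.

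For the converse, let $f$ be convex and piecewise-linear on $P$. Assume its maximal domains of linearity are the cells of a polyhedral subdivision $\mathcal{S}$ whose cell vertices lie in $V$. Define weights $w(\p_i) := -f(\p_i)$, so that all lifted points lie on the graph of the concave function $-f$. We claim $g_w = -f$ on $P$.
\begin{itemize}
\item[] \emph{Step 1: $g_w \le -f$.} The function $g_w$ is the smallest concave function on $P$ lying above the lifted points. Since $-f$ is concave and passes through all of them, $g_w \le -f$.
\item[] \emph{Step 2: $g_w \ge -f$.} Take $\x$ in a cell $P_j$. Write $\x$ as a convex combination of vertices of $P_j$, all of which lie in $V$. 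On $P_j$, $-f$ is affine, so $-f(\x)$ equals the same convex combination of the lifted heights. That point lies in the lifted convex hull, hence $g_w(\x) \ge -f(\x)$.
\end{itemize}
Thus the upper hull is the graph of $-f$, and its facets project to the maximal domains of linearity of $f$. The regular subdivision induced by $w$ is therefore $\mathcal{S}$.

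The main obstacle is in the converse: matching facets to cells exactly rather than only up to refinement. Two adjacent cells might carry the same affine function, which would merge them into one upper facet. This is excluded by taking \emph{maximal} domains of linearity, which is how the statement should be read. One must also guarantee that the cell vertices belong to $V$, which is implicit in calling $\mathcal{S}$ a subdivision of the point configuration. Points of $V$ that are not cell vertices cause no difficulty: they lie on the graph of $-f$, so they are simply not vertices of the upper hull.
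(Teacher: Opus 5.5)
Your proof is correct. The paper gives no argument for this proposition; it is quoted from De Loera--Rambau--Santos (originally Gelfand--Kapranov--Zelevinsky), so there is no route of the paper's to compare against.

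What you wrote is the standard proof. You identify the upper hull with the graph of the concave envelope $g_w$ of the lifted points and read off the cells as the sets where $g_w$ agrees with the affine function of an upper facet. For the converse you lift by $-f$ and bound $g_w$ by $-f$ from both sides.

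Compared with the paper's informal statement, your version makes explicit two hypotheses without which the equivalence fails. First, the domains of linearity must be taken maximal. Second, the cell vertices must lie in $V$. Coning a triangle from an interior point not in $V$ shows why the second is needed: the three sub-triangles are the maximal domains of linearity of a convex piecewise-linear function, yet this is not a regular subdivision of the three vertices.

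Both hypotheses hold in the paper's application. There $f(\x)=h_P(\x,1)$ and $V$ consists of the points where the rays of $\mathcal{N}(P)$ meet the slice; only your converse direction is used.

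One small caveat concerns the labeled convention of De Loera--Rambau--Santos. There, a point of $V$ whose lift lies on the graph of $-f$ is counted as a member of the corresponding cell. So if $\mathcal{S}$ is meant to omit such a point, you should lower its height slightly; this does not change the upper hull. Under the paper's purely geometric definition of a subdivision, your remark that such points are harmless is enough.
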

 
Now we may rephrase Theorem \ref{thm:main} in the language of a completely discrete geometric problem. Let $P$ be the $(d+1)$-polyhedron in Theorem \ref{thm:main}. Intersecting the normal fan $\mathcal{N}(P)$ with the horizontal hyperplane $H=\{\y: y_{d+1} =1 \}$ yields a $d$-dimensional polyhedral complex $\mathcal{S}= \mathcal{N}(P) \cap H$. Since the support function $h_P$ is convex and piecewise linear, and it is linear in each normal cone in $\mathcal{N}(P)$, the cells in $\mathcal{S}$ are exactly the domain of linearity of the restriction of the support function $h_P(\x,1)$. By Proposition \ref{prop: regular subdiv}, $\mathcal{S}$ forms a regular subdivision of a polytope, where each vertex comes from a generating ray of $\mathcal{N}(P)$, and each cell is a slice of a normal cone in $\mathcal{N}(P)$. We can thus restate Theorem \ref{thm:main} as follows.
\begin{theorem}[Stabbing Cells in subdivision]\label{thm: main (stbbing version)}
  For every integer $d \geq 3$, there exists a regular polyhedral subdivision $S$ of a configuration of $(2d+1)(d+1)$ points in $\mathbb{R}^d$ such that there exists an interior point $\x^*$ in a cell of $S$ such that every ray emanating from $\x^*$ crosses the interior of at least $2^{d-1}$ distinct full-dimensional cells in $S$.
\end{theorem}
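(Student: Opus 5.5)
We derive the statement from Corollary \ref{cor: bounding from below}, or equivalently from Theorem \ref{thm: unbounded main}. Let $P$ be that polyhedron, whose normal fan has all generating rays pointing upward. We take $\mathcal{S} = \mathcal{N}(P)\cap H$ with $H=\{\y : y_{d+1}=1\}$, and identify $H$ with $\R^d$ via $\bar{\x}\mapsto \x$. The proof has three steps: show $\mathcal{S}$ is a regular subdivision of the right point configuration, choose $\x^*$, and transfer the ray-crossing count.

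\textbf{Regularity and the point count.} Every facet outer normal of $P$ has positive last coordinate (Lemma \ref{lem:allpositive}, preserved through Lemmas \ref{lem: new shrinking} and \ref{lem:refinement}). So every normal cone of $P$ meets $H$ in a bounded polytope whose vertices are the points $\n_i/(n_i)_{d+1}$, one for each facet normal $\n_i$. Hence the configuration $V$ consists of these $(2d+1)(d+1)$ points, one per facet of $P$. The support function $h_P$ is convex and piecewise linear, and it is linear exactly on each normal cone. Restricting it to $H$ gives a convex piecewise linear function on $\conv(V)$, which is the union of the slices because the fan of $P$ covers the upward cone over $\conv(V)$. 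Proposition \ref{prop: regular subdiv} then shows that $\mathcal{S}$ is a regular subdivision of $V$.

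\textbf{Choice of $\x^*$.} We take $\x^*=\mathbf{0}$, corresponding to $\e_{d+1}$. By construction $\e_{d+1}$ is maximized at a unique vertex of $R_{d+1}$ (induction in Theorem \ref{thm: unbounded main}), so by Proposition \ref{prop: maximizer vs normal cone} it lies in the interior of a full-dimensional normal cone. Therefore $\mathbf{0}$ is an interior point of a cell of $\mathcal{S}$.

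\textbf{Ray crossings.} A ray from $\mathbf{0}$ in direction $\c\neq\mathbf{0}$ in $\R^d$ lifts to exactly $\r(\c)$. Theorem \ref{thm: unbounded main} says $\r(\c)$ meets the interiors of at least $2^{d-1}$ maximal normal cones. A cone whose interior meets $H$ has slice with nonempty relative interior in $H$, and that slice is a full-dimensional cell of $\mathcal{S}$. So the ray crosses the interiors of at least $2^{d-1}$ distinct cells.

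The main obstacle is the regularity bookkeeping. We must check that every maximal cone of $\mathcal{N}(P)$ is pointed and upward, so that its slice is bounded. We must also check that the slices exactly tile $\conv(V)$ rather than a larger unbounded region. Upward normals imply the cones lie in the open upper half-space together with $\mathbf{0}$; we also need that the union of the cones is the cone over $\conv(V)$. For this I would use the general fact that the support of a normal fan is the dual of the recession cone, which equals the cone generated by the facet normals.
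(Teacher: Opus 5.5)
Your proposal is correct and follows the paper's argument. You slice $\mathcal{N}(P)$ at $y_{d+1}=1$, identify the cells as the domains of linearity of the convex piecewise-linear function $h_P(\x,1)$, invoke Proposition \ref{prop: regular subdiv}, and read off the ray count from Theorem \ref{thm: unbounded main} with $\x^*$ corresponding to $\e_{d+1}$. Your extra bookkeeping (bounded slices from upward normals, and support of the fan equal to the cone over the facet normals) fills in details the paper leaves implicit.

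One correction: the bounded polytope of Corollary \ref{cor: bounding from below} is not interchangeable with the polyhedron of Theorem \ref{thm: unbounded main} here. Its extra facet normal $-\e_{d+1}$ makes its fan complete, so the slice is unbounded and there is one more point. You must therefore use the unbounded polyhedron of Theorem \ref{thm: unbounded main}, which is what your argument and the count $(2d+1)(d+1)$ actually do.
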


\begin{proof}[Proof of Corollary \ref{cor:stabbing}]
  
It is a well-known fact that any polytope can be made a simple polytope by perturbing its facets. Algebraically, this is done by slightly perturbing the right-hand side vector $\mathbf{b}$ in the inequality system $A\x\leq \mathbf{b}$ that defined the polytope. The perturbation gives us a new polytope $P'$ whose facets' outer normals remain the same, but the normal fan $\mathcal{N}(P)$ becomes a simplicial refinement of $\mathcal{N}(P_0)$: each normal cone in $\mathcal{N}(P)$ is triangulated into smaller simplicial chambers.

Since all outer normals of $P'$ still point upward, intersecting $\mathcal{N}(P')$ with $H = \{y_{d+1} =1 \}$ yields a a regular triangulation of the bounded polytope $S = \mathcal{N}(P')\cap H$. Every normal cone $N_\v$ in $\mathcal{N}(P')$ becomes a $d$-simplex $\Delta_\v$ in $S$. Since a refinement of the normal fan can only preserve or increase the number of chambers a ray crosses, by Theorem \ref{thm: main (stbbing version)}, we are done.
\end{proof}

\subsubsection{Maximizing parametric linear functions}

Alternatively, the statement of Theorem \ref{thm:main} can be framed in the terminology of parametric linear optimization. Since a ray intersecting a sequence of normal cones corresponds to tracking the sequence of maximizers under a changing linear objective function, we obtain the following equivalent formulation:

\begin{theorem}[Maximizers of linear functionals]\label{thm: main PLP Version}
  For every $d\geq 3$, there exists a $(d+1)$-dimensional polyhedron $P$ with $(2d+1)(d+1)$ facets such that for the fixed vector $\w= \e_{d+1}$ and any non-zero horizontal direction vector $\c$ (with $c_{d+1}=0$), the parametric linear function
  \[ \max_{\y\in P} (\e_{d+1} + \lambda \c)^\T \y \]
  has a sequence of $2^{d-1}$ maximizers as $\lambda$ goes to infinity.
\end{theorem}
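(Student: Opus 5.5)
The plan is to take $P$ to be the polyhedron $R_{d+1}$ built in Theorem \ref{thm: unbounded main}. It has $(2d+1)(d+1)$ facets, and it has the property that for every $\c'\in\R^d\setminus\{\mathbf{0}\}$ the ray $\r(\c')=\{(\lambda\c',1):\lambda\ge 0\}$ meets the interiors of at least $2^{d-1}$ of its normal cones. Given a nonzero horizontal $\c$, write $\c=(\c',0)$ with $\c'\neq\mathbf{0}$. Then $\e_{d+1}+\lambda\c=(\lambda\c',1)$, so the parametric family of objectives $\{\e_{d+1}+\lambda\c:\lambda\ge0\}$ is literally the ray $\r(\c')$. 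Thus Theorem \ref{thm: unbounded main} says that this family of objectives passes through the interiors of at least $2^{d-1}$ distinct normal cones $N_{\v_1},\dots,N_{\v_k}$, $k\ge 2^{d-1}$.

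Next I would convert cones into maximizers using Proposition \ref{prop: maximizer vs normal cone}. If $\e_{d+1}+\lambda\c$ lies in $\mathrm{int}(N_{\v_j})$, then $\v_j$ is a maximizer. In fact it is the unique maximizer: interiors of distinct full-dimensional normal cones are disjoint, so the objective lies in no other vertex cone. The set of $\lambda$ with $(\lambda\c',1)\in\mathrm{int}(N_{\v_j})$ is an open interval, because the cone is convex and open and the ray is affine in $\lambda$. These intervals are pairwise disjoint for distinct $j$. Ordering them along $\lambda\in[0,\infty)$ produces a sequence of at least $2^{d-1}$ distinct unique maximizers, met successively as $\lambda$ increases toward infinity.

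The only points needing care are the following. First, one should check that the maximum is attained (is finite) along the whole ray. This holds for the interior points used, since the objective lies in a normal cone, and I would note that $P$ is unbounded but every objective considered lies in the support of the fan where needed. Second, one should check that distinct cones really give distinct vertices, which is immediate from the definition of the fan as indexed by vertices. Neither is a real obstacle: all of the substance is already in Theorem \ref{thm: unbounded main}, and this theorem is a rephrasing of it. If a bounded version were wanted, I would instead use $P'$ from Corollary \ref{cor: bounding from below}, at the cost of one extra facet.
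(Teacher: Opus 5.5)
Your proposal is correct and follows the paper, which gives no separate argument but presents this theorem as a direct rephrasing of Theorem~\ref{thm: unbounded main} through Proposition~\ref{prop: maximizer vs normal cone}: for horizontal $\c=(\c',0)$ the objectives $\e_{d+1}+\lambda\c$ are exactly the ray $\r(\c')$, and each normal cone whose interior this ray crosses contributes a unique maximizer. One small precision: since every facet normal of $R_{d+1}$ points upward, the maximum is actually $+\infty$ for all sufficiently large $\lambda$, so finiteness holds only on the $\lambda$-intervals where the objective lies in a cone's interior, which is all your argument needs.
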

\begin{proof}[Proof of Corollary \ref{cor:parametric optimization}]
By Proposition \ref{prop: orthogonality}, there exists a vector $\w'\in \R^{d+1}$ such that $\w'$ is orthogonal to $\c$ and the ray 
\[ \r' = \{\w' + \lambda\c \;:\; \lambda\geq 0 \} \]
is contained in the line $L = \{\w+\lambda \c) : \lambda\in \R\}$. Apply an invertible linear transformation $T$ that maps $\w'$ to $\e_{d+1}$ and $\c$ to a horizontal vector $T(\c)$. By Theorem \ref{thm: main PLP Version}, there exists a polyhedron $P$ with $(2d+1)(d+1)$ facets such that the parametric optimization problem
  \[ (\e_{d+1} + \lambda T(\c))^\T \y \]
  has a sequence of $2^{d-1}$ maximizers over $P$ as $\lambda$ goes to infinity.

Let $P' = T^\T(P)$. Since $T$ is just rotation and rescaling, the sequence of maximizer faces is preserved under the transformation, hence the parametric optimization problem
  \[ (\w'+ \lambda \c)^\T \y \]
  has a sequence of $2^{d-1}$ maximizers over $P'$ as $\lambda$ goes to infinity.
\end{proof}

\section*{AI Usage Statement}

No generative AI was used in a meaningful way for this paper at any stage of the process.


\bibliographystyle{amsplain}
\bibliography{bibliography.bib}

\end{document}